\newtheorem{theorem}{Theorem}
\newtheorem{corollary}[theorem]{Corollary}
\newtheorem{example}[theorem]{Example}
\newtheorem{lemma}[theorem]{Lemma}
\newtheorem{proposition}[theorem]{Proposition}
\newtheorem{remark}[theorem]{Remark}
\numberwithin{equation}{section}
\numberwithin{theorem}{section}
\begin{document}

\title{Thermodynamic Formalism for a family of cellular automata and duality with the shift}
\author{Artur O.  Lopes,  Elismar R. Oliveira and Marcelo Sobottka}
\maketitle

\begin{abstract}We will consider a family of cellular automata $\Phi: \{1,2,...,r\}^\mathbb{N}\circlearrowright$ that are not of algebraic type.
Our first goal is to determine conditions that result in the identification of probabilities that are at the same time $\sigma$-invariant and $\Phi$-invariant, where $\sigma$ is the full shift. Via the use of versions of the Ruelle operator $\mathcal{L}_{A,\sigma}$ and $\mathcal{L}_{B,\Phi}$ we will show that there is an abundant set of measures with this property; they will be equilibrium probabilities for different Lispchitz potentials $A,B$ and for the corresponding dynamics $\sigma$ and $\Phi$. Via the use of a version of the involution kernel $W$ for a $(\sigma,\Phi)$-mixed skew product $\hat{\Phi}: \{1,2,...,r\}^\mathbb{Z}\circlearrowright$, given $A$ one can determine $B$, in such way that the integral kernel $e^W$ produce a duality between eigenprobabilities $\rho_A$ for $(\mathcal{L}_{A,\sigma})^*$ and eigenfunctions $\psi_B$ for $\mathcal{L}_{B,\Phi}$. In another direction, considering the non-mixed extension $\hat{\Phi}_n : \{1,2,...,r\}^\mathbb{Z}\circlearrowright$ of $\Phi$, given a Lispchitz potential $\hat{A} : \{1,2,...,r\}^\mathbb{Z}\to \mathbb{R}$, we can identify a Lipschitz potential $A:\{1,2,...,r\}^\mathbb{N} \to \mathbb{R} $, in such away that relates the variational problem of $\hat{\Phi}_n$-Topological Pressure for $\hat{A}$ with the $\Phi$-Topological Pressure for $A$. We also present a version of Livsic's Theorem.
Whether or not $\Phi$ (or $\hat{\Phi})$ can eventually be conjugated with another shift of finite type is irrelevant in our context.
\end{abstract}

\section{Introduction} \label{sec:2}

Denote the alphabet by $\mathcal{A}=\{1,2,...,r\}$, $\Omega= \mathcal{A}^\mathbb{N}$, and take two Lispchitz potentials $B_1,B_2:\Omega \to \mathbb{R}.$

Let's first consider a more general setting that involves two commuting transformations $\Phi_1:\Omega \to \Omega$ and $\Phi_2:\Omega \to \Omega $, and where some of the results we will describe can be applied. A particular case of interest is when $\Phi_1$ is the unilateral shift $\sigma:\Omega \to \Omega $ and $\Phi_2:\Omega \to \Omega$ is a cellular automata to be defined later. 
As usual
$$\sigma(x_1,x_2,...,x_n,...) =(x_2,x_3,...,x_n,...).$$

The bilateral shift is denoted by $\hat{\sigma}: \mathcal{A}^\mathbb{Z}\to \mathcal{A}^\mathbb{Z}.$

Consider two $r$ to $1$ surjective expanding transformation $\Phi_1,\Phi_2$, which commute, and for $j=1,2$, the two corresponding topological pressure problems
\begin{equation} \label{tryp} P_{\Phi_j}(B_j):=\sup_{\rho \in\mathcal{M} (\Phi_j)} \{
h_{\Phi_j}(\rho)+ \,\int B_j (x) d \rho(x)\},
\end{equation}
where, $h_{\Phi_j}(\rho)$ is the Shannon-Kolmogorov entropy of $\rho$ for the dynamics of $\Phi_j$, and $\mathcal{M}(\Phi_j)$ and $\mathcal{M}(\Phi_1,\Phi_2)$ are the set of probabilities that are invariant, respectively, for $\Phi_j$ and both $\Phi_1, \Phi_2$. From the above problems we get, respectively, the solutions $\mu_{B_j,\Phi_j}$, $j=1,2$, which are called the equilibrium probabilities for respectively $B_j,\Phi_j$, $j=1,2$. The value $P_{\Phi_j}(B_j)$ is called the topological pressure for the potential $B_j$ and the dynamics of $\Phi_j$, $j=1,2$.

We will show here that $\mathcal{M}(\Phi_1,\Phi_2)$ is not empty, which will imply that
$ \mathcal{M}(\sigma,\Phi)$ is also not empty (and has an abundance of elements as described for instance in Example \ref{kli}).

The Ruelle operator is defined as $\mathcal{L}_j(f)=\mathcal{L}_{B_j, \Phi_j}(f)=g$, via
\begin{equation} \label{tryp33}g(y)=\mathcal{L}_j (f)(y) = \sum_{\Phi_j(x)=y} e^{B_j(x)} f(x).
\end{equation}

The Ruelle Theorem is valid for both $\mathcal{L}_1,\mathcal{L}_2$, and we get, respectively,
the eigenvalues $\lambda_{B_j, \Phi_j}$, eigenfunctions $\psi_{B_j, \Phi_j },$ and eigenprobabilities $\rho_{ B_j, \Phi_j}$ for $\mathcal{L}_j^*$, $j=1,2$. It is well known that
$\mu_{ B_j, \Phi_j}=\psi_{B_j, \Phi_j}\,\rho_{ B_j, \Phi_j} $,
when normalized.
A particular case of interest is when $\Phi_1=\sigma$ and $\Phi_2=\Phi$ is the CA to be defined in the introduction. 


A general reference for classical Thermodynamic Formalism is \cite{PP} and for IFS Thermodynamic Formalism see \cite{Fan}, \cite{Ye}, or \cite{BOR23}.

It is well known that the eigenfunction $\psi_{B_j, \Phi_j },$ and eigenprobability $\rho_{ B_j, \Phi_j}$ are dual objects; as the duality of functions and measures, but by this we mean a relation not in the sense of Riesz Theorem. They are obtained respectively via $\mathcal{L}_{B_j, \Phi_j}$ and $(\mathcal{L}_{B_j, \Phi_j})^*$. We are interested here, among other things, in conditions on $B_1$ and $B_2$ that relate $\psi_{B_2, \Phi_1 }$ and $\rho_{B_1, \Phi_2 }$.

In another direction, we ask: is there $B_1,B_2$ such that
$\mu_{ B_1, \Phi_1}= \mu_{ B_2, \Phi_2}?$

Following \cite{Kim}, the answer to this last question regards considering a certain special relation between $B_1$ and $B_2$.

Below, in order to be in accordance with the notation in \cite{Kim}, we will take $B_1=A_2$ and $B_2=A_1$.

The authors in \cite{Kim} introduced the condition
\begin{equation} \label{klj1} 1) A_1 - A_1 \circ \Phi_1 = A_2 - A_2 \circ \Phi_2,
\end{equation}
or a more general one, where it is assumed that there exists a Lipschitz function $w$ such
\begin{equation} \label{klj2} 2) (A_1 - A_1 \circ \Phi_1)- ( A_2 - A_2 \circ \Phi_2) = (w - w \circ \Phi_1) - (w - w \circ \Phi_1) \circ \Phi_2 ,
\end{equation}
which is equivalent to
$$ (A_1 - A_1 \circ \Phi_1) = [A_2 + ( w - w \circ \Phi_1 )] - [A_2 + ( w - w \circ \Phi_1 )] \circ \Phi_2 . $$

In \cite{Kim} it was shown that if $A_1,A_2$ satisfies condition 2), then
$\mu_{ A_2, \Phi_1}= \mu_{ A_1, \Phi_2}$ (see our Theorem \ref{maint} for a more detailed proof).

However, \cite{Kim} does not provide an example of potentials satisfying such conditions 1) or 2). An interesting question is to find an example where $\Phi_1=\sigma$ and $\Phi_2=\Phi$, and this will be provided later in Example \ref{kli}.

\smallskip

In Section \ref{sec:2} we introduce the local rule that will define the class of cellular automata $\Phi$ (and we present examples) that will be the focus of Sections \ref{ther} and \ref{dul}.

In Theorem \ref{maint} in Section \ref{ther} we exhibit conditions on potentials $A_1$ and $A_2$ such that
the respective equilibrium probabilities for $\sigma$ and $\Phi$, are the same.
Theorem \ref{maint1} is a
kind of reciprocal of Theorem \ref{maint}.

In Section \ref{dul} via the use of a version of the involution kernel $W$ for a $(\sigma,\Phi)$-mixed skew product $\hat{\Phi}: \{1,2,...,r\}^\mathbb{Z}\circlearrowright$, we show that given a Lipschitz potential $A$ one can determine another Lipschitz potential $B$, in such way that the integral kernel $e^W:\{1,2,...,r\}^\mathbb{Z} \to \mathbb{R}$ produce a duality between eigenprobabilities $\rho_A$ for $(\mathcal{L}_{A,\sigma})^*$ and eigenfunctions $\psi_B$ for $\mathcal{L}_{B,\Phi}$. In another direction, considering the non-mixed extension $\hat{\Phi}_n : \{1,2,...,r\}^\mathbb{Z}\circlearrowright$ of $\Phi$, given a Lispchitz potential $\hat{A} : \{1,2,...,r\}^\mathbb{Z}\to \mathbb{R}$, we can identify a Lipschitz potential $A:\{1,2,...,r\}^\mathbb{N} \to \mathbb{R} $, in such away that relates the variational problem of $\hat{\Phi}_n$-Topological Pressure for $\hat{A}$ with the $\Phi$-Topological Pressure for $A$. Properties for the $\hat{\Phi}_n$-equilibrium probability for $\hat{A} : \{1,2,...,r\}^\mathbb{Z}\to \mathbb{R}$ can be derived in this way.

In Section \ref{Livvi} we present a version of Livsic's Theorem for our setting.

In Section \ref{ss6}, with the aim of putting our results in context, we present a review of previous results that are at the interface of cellular automata and ergodic theory. 
In particular questions that are in one way or another related to Furstenberg's conjecture.

Finally, in the Appendix we present the proof of some technical results mentioned before on the text.

\section{A family of Cellular Automata}\label{sec:2}

Remember that the alphabet is $\mathcal{A}=\{1,2,...,r\}$ and $\Omega= \mathcal{A}^\mathbb{N}.$

In $\Omega$ we consider the metric $d$ such that for
$$x=(x_1,x_2,....), y=(y_1,y_2,....):$$

a) If $x_1\neq y_1$, then, $d(x,y)=2^{-1}=1/2,$
and otherwise,

\[d(x,y)=
\left\{
\begin{array}{ll}
0, & x=y \\
2^{-\{\min_j x_j \neq y_j\}}, & x\neq y
\end{array}
\right.
\]
$x=(x_1,x_2,....), y=(y_1,y_2,....).$

Note that, if $x_1=y_1$, then $d(x,y)\leq 1/4$.


We define a particular local rule $\phi: \mathcal{A} \times \mathcal{A} \to \mathcal{A}$, where we assume that
for any fixed $a$, the law 
\begin{equation} \label{perm} b\in \mathcal{A} \to \phi(a,b) \,\,\text{is bijective}. 
\end{equation} 
Then, for each $a$ we get that $b \to \phi(a,b)$
is a
permutation on $d$ symbols.

Since $\mathcal{A}$ is finite, the local rule is completely described by a matrix $M:=(\phi(i,j))_{r\times r}$ such that $\phi(a,b)=M_{a,b}$, thus each row is a permutation of $\mathcal{A}$. For example, for $\mathcal{A}=\{1,2,3\}$ we may choose,
\[M:=
\left(
\begin{array}{ccc}
1 & 2 & 3 \\
1 & 2 & 3 \\
3 & 1 & 2 \\
\end{array}
\right)
\]
So, in this case $\phi(3,2)=M_{3,2}=1$.


We will consider the cellular automata $\Phi : \Omega \to \Omega$ given by
\begin{equation} \label{tryp1}\Phi(x_1,x_2,x_3,x_4,..,x_n,...) = ( \phi(x_1,x_2), \phi(x_2,x_3), \phi(x_3,x_4),...).
\end{equation}

We avoid the case of the trivial cellular automata: for each $a$, $\phi(a,b)=b$, in order that $\Phi$ is not $\sigma$.
 Here we  call {\bf permutative}  the CA $\Phi$ obtained from the $\phi$ of \eqref{perm}. Our examples do not necessarily fit the ones in the family of algebraic cellular automata.

Note that $\sigma \circ \Phi = \Phi \circ \sigma.$ The  $\Phi$ defined by \eqref{perm} is in some sense the simpler of all possible CA.

We point out that our main goal is to determine properties that result in the identification of probabilities that are at the same time $\sigma$-invariant and $\Phi$-invariant.
If $A_2$ and $A_1$ are such that
\begin{equation} \label{klj26} (A_1 - A_1 \circ \sigma)- ( A_2 - A_2 \circ \Phi) = (w - w \circ \sigma) - (w - w \circ \sigma) \circ \Phi,
\end{equation}
for some Lispchitz function $w$, then, in Theorem \ref{maint} we will show that the classical $\sigma$-equilibrium probability for the pressure of $A_2$ (see \cite{PP}) will be also $\Phi$-invariant.

Therefore, whether or not $\Phi$ can eventually be conjugated with another shift is irrelevant in our context. We will work with information and properties obtained directly from $A_1, A_2, \sigma$ and $\Phi$ (see for instance expressions \eqref{two} and \eqref{three}).

The next result will show that $\Phi$ is such that for each $x\in \Omega$, the set of preimages $\Phi^{-n} (x)$, $n \in \mathbb{N}$ is dense in $\Omega$.



\begin{proposition} Let $\phi:\mathcal{A}\times\mathcal{A} \to \mathcal{A}$ a local rule (we assume in this section that
\begin{equation}\label{pkit}j \to \phi(a, j)\,\,\text{ is a bijection for any}\,\, a \in \mathcal{A}).\end{equation} Consider the map $\Phi: \Omega \to \Omega$, where $\Omega=\{1,\ldots,r\}^{\mathbb{N}}$, associated to this rule.\\
For any $x \in \Omega$ the set of preimages by $\Phi$ is dense.
\end{proposition}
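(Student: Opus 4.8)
The claim to establish is that the set $\bigcup_{n\ge 1}\Phi^{-n}(x)$ is dense in $\Omega$; since a single fiber $\Phi^{-1}(x)$ is finite, this is the only reasonable reading. As the cylinders $[a_1,\ldots,a_k]:=\{z\in\Omega:\ z_1=a_1,\ldots,z_k=a_k\}$ form a basis for the topology of $\Omega$, it suffices to show that for every finite word $a_1\cdots a_k$ there exist $n\ge 1$ and $z\in\Phi^{-n}(x)$ with $z\in[a_1,\ldots,a_k]$. The whole argument rests on a single structural feature of the rule \eqref{tryp1}: its right-permutativity \eqref{pkit}.

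The key lemma I would isolate is the following. For $z\in\Omega$ and $n\ge 1$, the coordinate $\Phi^n(z)_1$ depends only on $z_1,\ldots,z_{n+1}$, and for any fixed values of $z_1,\ldots,z_n$ the map $z_{n+1}\mapsto \Phi^n(z)_1$ is a bijection of $\mathcal{A}$. I would prove this by induction on $n$. The case $n=1$ is exactly \eqref{pkit}, since $\Phi(z)_1=\phi(z_1,z_2)$. For the inductive step write $\Phi^n(z)_1=\Phi^{n-1}(y)_1$ with $y=\Phi(z)$, so that $y_i=\phi(z_i,z_{i+1})$; the values $y_1,\ldots,y_{n-1}$ depend only on $z_1,\ldots,z_n$ (hence are held fixed), while $z_{n+1}\mapsto y_n=\phi(z_n,z_{n+1})$ is a bijection by \eqref{pkit}, and by the inductive hypothesis $y_n\mapsto\Phi^{n-1}(y)_1$ is a bijection once $y_1,\ldots,y_{n-1}$ are fixed. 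Composing bijections gives the claim. By the commutation $\sigma\circ\Phi=\Phi\circ\sigma$ the same statement holds with the index $1$ replaced by any $i$: fixing $z_i,\ldots,z_{i+n-1}$, the map $z_{i+n}\mapsto\Phi^n(z)_i$ is a bijection, since $\Phi^n(z)_i=\Phi^n(\sigma^{i-1}z)_1$.

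With this lemma the construction is immediate. Given the target word $a_1\cdots a_k$, set $n=k$ and prescribe $z_1=a_1,\ldots,z_k=a_k$. I then determine the remaining coordinates recursively so as to force $\Phi^n(z)=x$: knowing $z_1,\ldots,z_n$ there is, by the lemma, a unique $z_{n+1}$ with $\Phi^n(z)_1=x_1$; knowing $z_1,\ldots,z_{n+1}$ there is a unique $z_{n+2}$ with $\Phi^n(z)_2=x_2$; and in general, once $z_1,\ldots,z_{n+i-1}$ have been fixed, the lemma produces a unique $z_{n+i}$ with $\Phi^n(z)_i=x_i$. This defines a single point $z\in\Omega$ lying in $[a_1,\ldots,a_k]$, and by construction $\Phi^n(z)=x$, that is $z\in\Phi^{-k}(x)$.

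The only point requiring care — and the one I view as the main obstacle — is verifying that this coordinatewise procedure is consistent, namely that fixing $z_{n+i}$ to satisfy the $i$-th constraint never disturbs a constraint already imposed. This is exactly where the localized dependence in the lemma is used: the value $\Phi^n(z)_{i'}$ depends only on $z_{i'},\ldots,z_{i'+n}$, so for $i'<i$ one has $i'+n\le n+i-1<n+i$, and hence $\Phi^n(z)_{i'}$ does not involve $z_{n+i}$. Thus the constraints can be solved one at a time with no back-reaction, each being met with equality, so the limit point $z$ genuinely satisfies $\Phi^n(z)=x$. Consequently every cylinder meets $\Phi^{-k}(x)$, which proves the asserted density.
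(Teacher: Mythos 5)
Your proof is correct, but it follows a genuinely different route from the paper's. The paper argues via iterated function systems: it introduces the inverse branches $\tau_j$ (where $\tau_j(x)$ is the preimage $y$ of $x$ with $y_1=j$), observes that each $\tau_j$ is a Lipschitz contraction, and invokes the Hutchinson fractal operator $F(B)=\bigcup_{j\in\mathcal{A}}\tau_j(B)$ on compact subsets, which has a unique compact attractor $K$ satisfying $F^k(B)\to K$ for every nonempty compact $B$; taking $B=\{x\}$ gives $F^k(\{x\})=\Phi^{-k}(x)$, so the preimage set is dense in $K$, and the proof is finished by checking $F(\Omega)=\Omega$, whence $K=\Omega$ by uniqueness of the attractor. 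You instead prove by induction that every iterate $\Phi^n$ is itself a permutative cellular automaton of anticipation $n$ --- that is, $\Phi^n(z)_i$ depends only on $z_i,\dots,z_{i+n}$ and is bijective in the last coordinate $z_{i+n}$ --- and then solve the equation $\Phi^k(z)=x$ coordinate by coordinate inside a prescribed cylinder of length $k$, the triangular structure of the dependence guaranteeing that each new constraint can be met without disturbing the previous ones. Your argument is more elementary (no appeal to the contraction/attractor machinery) and yields strictly more quantitative information: every cylinder of length $k$ contains exactly one point of $\Phi^{-k}(x)$, so the order-$k$ preimages are dense at scale $2^{-k}$; your key lemma on the permutativity of the iterates is also of independent interest. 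What the paper's approach buys is brevity given machinery that it keeps reusing --- the branches $\tau_j$ and the identification of $\Omega$ as the attractor of the system $\{\tau_j\}$ reappear in its proofs of transitivity, the closing lemma, and the description of periodic points --- so the IFS formulation serves there as a common framework for several results rather than just this one.
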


For proof see Proposition \ref{des} in the Appendix.

We denoted by $\mathcal{M}(\sigma), \mathcal{M}(\Phi) $ and $\mathcal{M}(\sigma,\Phi)$ the set of probabilities that are invariant, respectively, for $\sigma, \Phi$ and simultaneously for $\sigma$, and $\Phi$.

The function $\Phi$ is continuous, expanding at rate $2$ (see Lemma \ref{thm:Phi expanding lemma}), and we will show that the set $\mathcal{M} (\Phi,\sigma)$ is not empty and with cardinality bigger than one in several cases (see Example \ref{kli}).

Given $a$, we denote $u_a(b) = c$, the element $c$ such that $ \phi(a,c)=b.$

Given $y=(b_1,b_2,...,b_n,..)$, {\bf there exist $r$ points} $x=(a_1,a_2,...,a_n,..)$, such that $\Phi(x)=y$. Indeed, given $a_1\in \mathcal{A}$, take
\begin{equation} \label{tryp2} x= (a_1,u_{a_1}(b_1),u_{ u_{a_1}(b_1)}(b_2),...) .
\end{equation}

Given $j\in \mathcal{A}$, we denote by $\tau_j:\Omega \to \Omega $ the function such that given $x=(x_1,x_2,...,x_n,..)$
$$\tau_j(x):=(j,u_{j}(x_1),u_{ u_{j}(x_1)}(x_2),...).$$

For each $j$ we get that $\tau_j$ has a Lipschitz constant equal to $1/2$. Indeed, note that if $w_1=(x_1,x_2,...,x_n, x_{n+1},x_{n+2},...)$ and $w_2=(x_1,x_2,...,x_n, y_{n+1}, y_{n+2}..)$, where $x_{n+1}\neq y_{n+1}$ (which means $d(w_1,w_2)= 2^{-n})$, then, as $\phi(x_{n},x_{n+1})\neq \phi(x_{n},y_{n+1})$ (by bijectivity on the second variable)

$$ d( \Phi (w_1), \Phi (w_2) =$$
$$ d( ( \phi(x_1,x_2), ..., \phi(x_{n},x_{n+1}),...) , ( \phi(x_1,x_2), ..., \phi(x_{n},y_{n+1}),...))=$$
$$ 2^{-(n-1)} = 2\, 2^{-n}= 2\, d(w_1,w_2).$$


\begin{remark} \label{rre} The map $\Phi: \Omega \to \Omega$ of \eqref{pkit}, where $\Omega=\{1,\ldots,r\}^{\mathbb{N}}$,
has a transitive orbit (see Theorem \ref{thm:transitivity}). In Theorem \ref{pepe} in Subsection \ref{Fi} we show that 
each periodic point $x$ for $\Phi$ with period $m$ is a solution of $\tau_{j_{1}}\circ\cdots\circ\tau_{j_{m}}(x)=x$, for some choice of $j_{1}, \ldots, j_{m} \in \{1, \ldots, r\}$.  For each $m$ there exists $r^m$ points of period $m$. In Subsection \ref{Fi} we exhibit the periodic points of periods two and three in a particular example.

\end{remark}

\begin{example} We will present examples of periodic orbits for $\Phi:\{1,2\}^\mathbb{N}\to \{1,2\}^\mathbb{N}$ in Subsection \ref{Fi}. For example,
when
$$ M =\left(
\begin{array}{cc}
\phi(1,1) & \phi(1,2) \\
\phi(2,1) & \phi(2,2)
\end{array}
\right)= \left(
\begin{array}{cc}
2 & 1 \\
1 & 2
\end{array}\right).$$

The points $x=(1,2,2,..,)$ and $(2,2,2,...)$ are fixed points.

The point $x=(2,1,2,2,2,2,..)$ has period two: $\Phi(x)=(1,1,2,2,2,..)$.


\end{example}

The next example shows that the sets $\mathcal{M}(\sigma)$ and $\mathcal{M}(\Phi)$ are different.

\begin{example} \label{eds} Consider $r=3$ and a case where lines can repeat as in
$$ M=\left(
\begin{array}{ccc}
\phi(1,1) & \phi(1,2) & \phi(1,3) \\
\phi(2,1) & \phi(2,2) & \phi(2,3) \\
\phi(3,1) & \phi(3,2) & \phi(3,3)
\end{array}
\right)= \left(
\begin{array}{ccc}
1 & 2 & 3 \\
1 & 2 & 3 \\
3 & 2 & 1
\end{array}
\right).$$

In this case
$$ \Phi^{-1} (\overline{1}) = \overline{3 3} \cup \overline{2 1} \cup \overline{1 1},$$
$$ \Phi^{-1} (\overline{2}) = \overline{3 2} \cup \overline{2 2} \cup \overline{1 2}$$
and
$$ \Phi^{-1} (\overline{3}) = \overline{3 1} \cup \overline{2 3} \cup \overline{1 3}.$$

Moreover, $\Phi^{-2} (\overline{1})\cup \Phi^{-2} (\overline{2}) \cup \Phi^{-2} (\overline{3})$ is a total of $3^3$ different cylinders.

The independent probability $\mu$ on $\{1,2,3\}^\mathbb{N}$ associated to the weights
$$(1/7,2/7,4/7)$$ is $\sigma$-invariant but not $\Phi$-invariant. Indeed,
$$\mu (\Phi^{-1} (\overline{1}))= \mu(\overline{3 3}) + \mu(\overline{2 1}) + \mu(\overline{1 1})=$$
$$ 4/7 \,\,\, 4/7 + 2/7 \,\,\, 1/7 + 1/7 \,\,\, 1/7 \neq 4/7 \,\,\, 1/7 + 2/7 \,\,\, 1/7 + 1/7 \, \,\, 1/7 =1/7= \mu (\overline{1}).$$

In this case $\mathcal{M}(\sigma) \neq \mathcal{M}(\Phi).$

If $\mu_0$ is the measure of maximal entropy (for the shift $\sigma$), then,
\begin{equation} \label{oi1} \mu_0 (\overline{b_1,b_2,...,b_n} )= 3^{-n}
,\end{equation}
and therefore from the above $\mu_0$ is also $\Phi$-invariant, that is $\mu_0 \in \mathcal{M}(\sigma,\Phi) $.

\medskip

 A particular case of $\phi$ as in \eqref{perm} is the {\bf bipermutative} CA $\Phi$ where we assume the extra assumption on $\phi$:
 for  each  $b$, the law 
\begin{equation} \label{biperm} a\in \mathcal{A} \to \phi(a,b) \,\,\text{is bijective}. 
\end{equation} 

Most of our results are for the general case of the permutative CA $\Phi$.
We point out that in several papers on  the area  what is called  a bipermutative CA is always described by a local rule $\phi$ derived from  some kind of  group structure on the set of symbols $\{1,2,...,r\}$; the algebraic cellular automata (see for instance \cite{Pivato2005} and \cite{Haw}).

\end{example}

\begin{example} \label{kli} We will provide an example where equation \eqref{klj1} is true for $\Phi_1=\sigma$ and $\Phi_2=\Phi$, $r=2$, for functions that depends on the first two coordinates.

Consider $\phi$ such that
$$M= \left(
\begin{array}{cc}
\phi(1,1) & \phi(1,2) \\
\phi(2,1) & \phi(2,2)
\end{array}
\right)= \left(
\begin{array}{cc}
2 & 1 \\
2 & 1
\end{array}
\right).$$

This example of $\phi$ defines   a permutative $\Phi$ that is not bipermutative.

Take the functions $A_1$ and $A_2$ depending on the two first coordinates satisfying
$$ A_1 (x_1,x_2,...) = Q_{x_1,x_2} $$
and
$$ A_2 (x_1,x_2,...) = C_{x_1,x_2} .$$

Assume that
$$ Q_{2,1} = Q_{1,2}, C_{1,2} = C_{1,1} + Q_{1,2} - Q_{2,2}, $$
\begin{equation} \label{two}C_{2,1} =C_{1,1} - Q_{1,1} + Q_{1,2}, C_{2,2} = C_{1,1}.
\end{equation}

Then,

$$ A_1 - A_1 \circ \sigma = A_2 - A_2 \circ \Phi.$$

From \cite{Kim} (see also Theorem \ref{maint}) this implies that the $\sigma$-equilibrium for $A_2$ is equal to the $\Phi$-equilibrium for $A_1$. Then, $\mathcal{M}(\sigma,\Phi)$ is not empty.

Without loss of generality we can assume that $C_{12}= 1 - C_{11}$ and $C_{22}= 1 - C_{21}$ (see \cite{Lo1}).
The $\sigma$-equilibrium probability for $A_2$ is a Markov stationary measure $\mu$ (see \cite{Lo1} or \cite{Sp}). The set $\mathcal{M} (\sigma, \Phi)$ contains the one-parameter family of stationary Markov probabilities $\mu$ index by $-1<\lambda<1$, obtained by symmetric line stochastic matrices, where $C_{2,1} - C_{1,1,} = \lambda$. In this case $Q_{1,1} = Q_{1,2} - \lambda$ and $Q_{2,2}=Q_{1,1}, Q_{1,2}=Q_{2,1}.$

For the line stochastic matrix
$$ \left(
\begin{array}{cc}
1/3 & 2/3 \\
2/3 & 1/3
\end{array}
\right),$$
the left invariant probability vector is $(\Pi_1,\Pi_2) = (1/2,1/2).$

In this case
$$\Phi^{-1} (\overline{1}) = \overline{1,2} \cup \overline{2,2},$$
and
$$\Phi^{-1} (\overline{2}) = \overline{1,1} \cup \overline{2,1}.$$

Then,
$$\mu (\Phi^{-1} (\overline{1})) = \mu (\overline{1,2}) + \mu (\overline{2,2}) = 1/2 \,\, 2/3 + 1/2 \,\,1/3 = 1/2 = \mu ( \overline{1}).$$

In the same way $\mu (\Phi^{-1} (\overline{2})) = \mu ( \overline{2}) .$ This, computation highlights the property that $\Phi$ preserves the Markov probability $\mu$.

\smallskip

If
$$ M =\left(
\begin{array}{cc}
\phi(1,1) & \phi(1,2) \\
\phi(2,1) & \phi(2,2)
\end{array}
\right)= \left(
\begin{array}{cc}
2 & 1 \\
1 & 2
\end{array}
\right),$$
there are also solutions for $A_1$ and $A_2$ when
$$Q_{2,1} = 2 Q_{1,2} - Q_{1,1}, Q_{2,2} = Q_{1,2}, C_{1,2} = C_{1,1} + Q_{1,1} - Q_{1,2},$$
$$C_{2,1} = C_{1,1} - Q_{1,1} + Q_{1,2}, C_{2,2} = C_{1,1}. $$

In this case, one can deduce (after a simple manipulation) that the independent probability with weights $(1/2,1/2)$ (the maximal entropy measure) is the only one that can be obtained as $\Phi$-equilibrium for potentials that depend on two coordinates. 

This example of $\phi$ defines   a bipermutative  CA $\Phi$, and it also fits the case of an  algebraic CA on two symbols (see \cite{Pivato1} and \cite{Pivato2005}).

The property that we just show for this bipermutative CA (the maximal entropy measure is the only one in $\mathcal{M} (\sigma,\Phi),$ for potentials that depend on two coordinates)  is in some sense a  very particular example of a series of important cases covered by   the setting of  algebraic cellular automata (see \cite{Sob2}, \cite{Sobottka2008}, \cite{CS} and \cite{Pivato1}).

For instance in \cite{HostMaassMartinez} the authors show that 
the only $\Phi$-invariant, $\sigma$-ergodic measure $m$ with with positive  $\Phi$-entropy is the maximal entropy measure.\\
In another direction for affine cellular automata  the authors in \cite{mmpy} show that the only measure with complete connections and summable decay, that is simultaneously invariant by the cellular automata and the shift map, is the measure of maximum entropy.

\end{example}

We will provide an example where equation \eqref{klj1} is true for $\Phi_1=\sigma$ and
$\Phi_2=\Phi$, $r=2$, for functions that depend on the first three coordinates in Example \ref{kli45}
in the Appendix. This will show the existence of more complex examples in $\mathcal{M}(\sigma,\Phi)$.


\begin{proposition} \label{ell}
Consider $\mathcal{K}:=\{\overline{1}, \overline{2}, \ldots, \overline{r}\}$, a partition of $\Omega$. Then, for all $n \geq 0$ we have
$$ \mathcal{K} \bigvee \Phi^{-1}(\mathcal{K}) \bigvee \ldots \bigvee \Phi^{-n}(\mathcal{K})=\biguplus_{a_{1} \ldots a_{n+1} \in \mathcal{A}} \overline{a_{1} \ldots a_{n+1}}.$$

Moreover, give $n$, $x\in \Omega$, for each $y=(a_1,a_2,..,a_n,y_{n+1},y_{n+2},..) \in \Phi^{-n}(x)$, there exist a unique $z$ of the form $z=(a_1,a_2,...,a_n, z_{n+1},z_{n+2},..)$ in $\sigma^{-1} (x)$, and vice versa.
\end{proposition}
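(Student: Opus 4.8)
My plan is to isolate a single combinatorial fact, which I will call the slot-bijection lemma, and then read off both assertions from it. This lemma is where the permutativity hypothesis \eqref{pkit} does all the work. Write $R_a:=\phi(a,\cdot)$, so that \eqref{pkit} says each $R_a$ is a permutation of $\mathcal{A}$ with inverse $u_a$. Fix symbols $y_1,\dots,y_n$ and regard $y_{n+1}=\beta$ as a free variable; the claim is that $\beta\mapsto\big(\Phi^{n}(y)\big)_1$ is a bijection of $\mathcal{A}$. I would prove this by induction on $n$: the case $n=1$ is exactly $\beta\mapsto R_{y_1}(\beta)=\phi(y_1,\beta)$, and for the inductive step I use $(\Phi^{n}y)_1=\phi\big((\Phi^{n-1}y)_1,(\Phi^{n-1}y)_2\big)$. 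Here $(\Phi^{n-1}y)_1$ depends only on the fixed symbols $y_1,\dots,y_n$, so it is a constant $c$; and, using $\Phi\circ\sigma=\sigma\circ\Phi$, we have $(\Phi^{n-1}y)_2=(\Phi^{n-1}(\sigma y))_1$, which by the induction hypothesis applied to $\sigma y$ (whose first $n-1$ entries $y_2,\dots,y_n$ are fixed and whose $n$-th entry is $\beta$) is a bijective function of $\beta$. Composing with the permutation $R_c$ gives the claim. This is the same cancellation already used in the text to show that $\Phi$ expands distances by the factor $2$.

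With this lemma in hand, Part 1 becomes a change-of-coordinates statement. An atom of the refinement $\bigvee_{k=0}^{n}\Phi^{-k}(\mathcal{K})$ is, by definition, a set of the form $\{\,y:(\Phi^{k}y)_1=c_k,\ 0\le k\le n\,\}$ for a prescribed tuple $(c_0,\dots,c_n)\in\mathcal{A}^{n+1}$. I would show that the map $\Theta:(y_1,\dots,y_{n+1})\mapsto(c_0,\dots,c_n)$, with $c_k:=(\Phi^{k}y)_1$, is a bijection of $\mathcal{A}^{n+1}$. Indeed $c_0=y_1$, and once $y_1,\dots,y_k$ are known the lemma lets me solve $c_k=(\Phi^{k}y)_1$ uniquely for $y_{k+1}$; thus $\Theta$ is invertible and hence a bijection of the finite set $\mathcal{A}^{n+1}$. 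Consequently each atom equals the cylinder $\overline{y_1\dots y_{n+1}}$ with $(y_1,\dots,y_{n+1})=\Theta^{-1}(c_0,\dots,c_n)$, and as $(c_0,\dots,c_n)$ runs over $\mathcal{A}^{n+1}$ one obtains every $(n+1)$-cylinder exactly once, which is the asserted identity.

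For the second assertion I would first record that, for fixed $x$, the map $y\mapsto(y_1,\dots,y_n)$ is a bijection from $\Phi^{-n}(x)$ onto $\mathcal{A}^n$. Both surjectivity and injectivity come from reconstruction: given $a_1,\dots,a_n$, set $y_i=a_i$ for $i\le n$ and solve the equations $(\Phi^{n}y)_i=x_i$ successively for $i=1,2,\dots$; by the lemma each such equation determines $y_{n+i}$ uniquely from $y_i,\dots,y_{n+i-1}$ and $x_i$, so there is exactly one $y\in\Phi^{-n}(x)$ with the prescribed initial block $a_1\dots a_n$. On the shift side, the points of the displayed form $z=(a_1,\dots,a_n,z_{n+1},\dots)$ with $\sigma^{n}(z)=x$ are exactly $z=(a_1,\dots,a_n,x_1,x_2,\dots)$, again parametrized bijectively by $(a_1,\dots,a_n)$. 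Pairing the two points that share the same initial block then gives the desired bijection together with its inverse (the \emph{vice versa}). I note that the displayed form of $z$, with free coordinates only from index $n+1$ onward, forces the shift preimage here to be read as $\sigma^{-n}(x)$ rather than $\sigma^{-1}(x)$.

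The only real content is the slot-bijection lemma; everything afterward is bookkeeping once it is available. The point I expect to need care in its induction is that the two arguments of $\phi$ in $(\Phi^{n}y)_1=\phi\big((\Phi^{n-1}y)_1,(\Phi^{n-1}y)_2\big)$ have disjoint dependence on the free variable $\beta$ — the first argument is constant in $\beta$ while the second is bijective in $\beta$ — which is exactly what lets right-permutativity \eqref{pkit} alone (no bipermutativity) close the argument.
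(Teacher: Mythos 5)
Your proof is correct, and it is organized differently from the paper's. The paper (Proposition \ref{ell43} in the Appendix) proves the partition identity by induction on $n$ with a double inclusion: one inclusion by applying $\Phi$ to a generic $(n+2)$-cylinder, the other by writing an atom as $\overline{a_1\ldots a_{n+1}}\cap\Phi^{-n-1}(\mathcal{K})$, observing that $\Phi^{n+1}(y)_1=\psi(y_1,\ldots,y_{n+2})$, and then declaring $a_{n+2}$ to be ``the unique solution'' of $\psi(a_1,\ldots,a_{n+1},y_{n+2})=j$ --- that uniqueness/existence claim is precisely your slot-bijection lemma, which the paper uses implicitly and never proves. Your version makes this the explicit cornerstone (proved by induction via $(\Phi^{n}y)_1=\phi((\Phi^{n-1}y)_1,(\Phi^{n-1}y)_2)$ and the commutation $\Phi\circ\sigma=\sigma\circ\Phi$), and then replaces the double-inclusion induction by the triangular change of coordinates $\Theta$ on $\mathcal{A}^{n+1}$, which yields the identity in one stroke and exhibits the explicit correspondence between atoms and cylinders. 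Two further points in your favor: the paper's appendix proof stops after the partition identity and never actually proves the ``Moreover'' clause, whereas your successive-solvability argument (solving $(\Phi^n y)_i=x_i$ for $y_{n+i}$, $i=1,2,\ldots$) establishes the bijection between $\Phi^{-n}(x)$ and the shift preimages sharing the same initial $n$-block; and you correctly identified that the statement's $\sigma^{-1}(x)$ must be read as $\sigma^{-n}(x)$ for the claim to make sense (for generic $x$ no point of $\sigma^{-1}(x)$ has the displayed form, and the cardinalities $r^n$ versus $r$ could not match). The trade-off is simply length: the paper's induction is terser because it leaves the key bijectivity fact unjustified, while yours is fully self-contained and makes transparent that right-permutativity alone closes the argument.
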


For proof see Proposition \ref{ell43} in the Appendix.

\smallskip

\begin{remark} \label{ccon} Given the natural partition $\mathcal{K}=\{\overline{1}, \overline{2},
..., \overline{r}\}$
$$\mathcal{K}_n= \mathcal{K} \vee \Phi^{-1} \mathcal{K} \vee ...\vee \Phi^{-n}\mathcal{K} $$
suppose that it is equal to
the partition $\{\overline{a_1,a_2,...,a_n}, a_j\in \mathcal{A}, j=1,2,...,n\}$, for every $n \in \mathbb{N}$. This is the case we consider here as shown in Proposition \ref{ell}.
\end{remark}

In the case $\mu \in \mathcal{M}(\sigma)$, then the entropy according to $\sigma$
$$ h(\mu)=h_\sigma (\mu)=$$
\begin{equation} \label{utum} -\,\lim_{m \to \infty} \frac{1}{m} \sum_{ (a_1,a_2,..,a_m) \in \{1,2,...,d\} ^m} \mu(\overline{a_1,a_2,..,a_m}) \log (\mu(\overline{a_1,a_2,..,a_m})).
\end{equation}

In the case $\mu \in \mathcal{M}(\Phi)$, then the entropy according to $\Phi$ is
\begin{equation} \label{utum} h(\mu)=h_\Phi(\mu)= -\,\lim_{n \to \infty} \frac{1}{n} \sum_{ S \in \mathcal{K}_n} \mu(S) \log (\mu(S)).
\end{equation}

In this case, if $\mu \in \mathcal{M}(\sigma,\Phi)$, then $ h_\Phi(\mu)= h_\sigma(\mu).$



\section{Thermodynamic Formalism for $\Phi$} \label{ther}

First will state some results on  Thermodynamic Formalism for the $\Phi$ defined via \eqref{perm}. We point out that the proofs  of these results  follow from  a simple adaptation of the ones claiming  analogous results (which are well known) for the shift $\sigma$, as in \cite{PP} ($\Phi$ is an expanding map as shown in Lemma  \ref{thm:Phi expanding lemma}). We point out that the IFS  Thermodynamic Formalism (as in \cite{Hu}, \cite{BaDe} and \cite{BaDeEl}) can also be applied to $\Phi$  (the similar results that we will use here are also true)

The classical Ruelle Theorem is valid for $\sigma$; the same proof also works replacing $\sigma$ for $\Phi$. In order to make a distinction we can write $\mathcal{L}_{A_2,\sigma}$ and  $\mathcal{L}_{A_1,\Phi}$.

Given a Lipschitz potential $A:\Omega = \{1,2,...,r\}^\mathbb{N} \to \mathbb{R}$,
we interested in a probability $\mu_A$ which maximizes the pressure
\begin{equation} \label{tryp} P_\Phi(A):=\sup_{\rho \in\mathcal{M} (\Phi)} \{h_\Phi(\rho)+  \int A(x) d \rho(x)\}=
\end{equation}
$$  h_\Phi(\mu_A) + \int A \, d \mu_A,$$
where $A$ is the potential and $h_\Phi(\rho)$ is the $\Phi$-entropy  of $\rho$ (defined before).

Such $\mu_A$ is unique and will be called   the $\Phi$-equilibrium for $A$.



Given a Lipschitz potential $A:\Omega \to \mathbb{R}$, define $\mathcal{L}_A(f)=\mathcal{L}_{A, \Phi}(f)=g$, via
\begin{equation} \label{tryp3}g(y)=\mathcal{L}_A (f)(y) = \mathcal{L}_{A,\Phi} (f)(y)=\sum_{\Phi(x)=y} e^{A(x)} f(x).
\end{equation}

Note that for each $n>1$
\begin{equation} \label{tryp5}g(y)=\mathcal{L}_A^n (f)(y) = \sum_{\Phi^n(x)=y} e^{\sum_{j=0}^{n-1} A(\Phi^j (x))} f(x).
\end{equation}

Consider a  Lipschitz-continuous potential $A$ with Lipschitz constant $c$, and $x,y\in \mathcal{A}^\mathbb{N}$,  then given ${\bf a}\in \mathcal{A}^k$, there exist just two points ${\bf a} x, {\bf a} y $ in the same cylinder set such that
$\Phi^n ({\bf a} x)=x,$ and $\Phi^n( {\bf a} y)=y $. An  important property for the validity of the Ruelle Theorem (that we can also use for the dynamics of $\Phi$) is
\begin{equation} \label{esque}
|A({\bf a} x) - A({\bf a} y)| \leq c \, d({\bf a} x, {\bf a} y) \leq  c\,\,\frac{1}{2^{k }}\,\,  d(x,y)
\end{equation}
Above we are using the second claim in Proposition \ref{ell}.


Note that
given a
Lipschitz-continuous potential $A$, there exists $b>0$, such that, $\forall \,n$, and $\forall \,{\bf j}=(j_1,j_2,...,j_n)\in \{1,2,...,r\}^n=  \mathcal{A}^n$

\begin{equation} \label{kkh} \frac{\prod_{j=0}^{n-1} e^{A (\Phi^j(x))} }{ \prod_{j=0}^{n-1} e^{A (\Phi^j(y))}}<b,
\end{equation}
$\forall \,x,y \in \overline{j_1,j_2,...,j_n} =\overline{{\bf j}}$.


From \eqref{esque} follows that for a Lipschitz-continuous potential $A$, given $x,y\in \mathcal{A}^\mathbb{N}$,  we get that
\begin{equation} \label{kkhau}| S_{n,  A}\,({\mathbf a}x) - S_{n,  A}\,({\mathbf a}x)|
\end{equation}
is bounded independently of $n$ and the size of the word ${\mathbf a}\in \mathcal{A}^k$, $k>0$.

The above property \eqref{kkh} (or \eqref{kkhau}) is called the {\bf bounded distortion property} for the Lipschitz-continuous  potential $A$. This property is the key ingredient for analyzing the asymptotic of \eqref{tryp5} when $n \to \infty$.

We denote by $\mathcal{C}=C(\Omega, \mathbb{R})$ the space of continuous functions $f:\Omega \to \mathbb{R}.$

For the proof of the next theorem see for instance  \cite{PP} (adapted to our setting).

\begin{theorem}\label{rer1}
There exists a strict positive Lipschitz eigenfunction
 $\psi_A=\psi_{A, \Phi}$
for $\mathcal{L}_{A,\Phi} : {\cal C} \to {\cal C}$, associated to a unique strictly positive eigenvalue
$\lambda_A=\lambda_{A,\Phi}$. The eigenvalue is simple (and isolated from the rest of the spectrum when $ \mathcal{L}_A $ acts on the set of Lipschitz functions) and it is equal to the supremum of the modulus of the values of the spectrum. Moreover, $\log \lambda_{A,\Phi} =P_\Phi (A).$
\end{theorem}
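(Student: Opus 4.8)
The plan is to follow the standard Ruelle--Perron--Frobenius (RPF) strategy exactly as in \cite{PP}, exploiting that $\Phi$ shares all the structural features the classical proof uses for $\sigma$. The key inputs already available to us are: $\Phi$ is an $r$-to-$1$ surjective continuous map that is expanding at rate $2$ (Lemma \ref{thm:Phi expanding lemma}); by \eqref{tryp2} each $y$ has exactly $r$ preimages $\tau_j(y)$, $j \in \mathcal{A}$, with each inverse branch $\tau_j$ contracting by $1/2$; and, crucially, the bounded distortion property \eqref{kkh} holds for any Lipschitz potential $A$. These are precisely the hypotheses under which the RPF theorem holds, so the proof is an adaptation rather than a new argument.

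First I would construct the eigenfunction. Consider the normalized iterates $\mathcal{L}_{A,\Phi}^n(1)/\|\mathcal{L}_{A,\Phi}^n(1)\|$ and show, using bounded distortion \eqref{kkh}, that they lie in a cone of functions with uniformly bounded logarithmic Lipschitz ratios; this cone is compact in $\mathcal{C}$ by Arzel\`a--Ascoli. A fixed point argument (Schauder--Tychonoff, or the Hilbert metric contraction of Birkhoff applied to this cone) then produces a strictly positive Lipschitz $\psi_A$ and positive scalar $\lambda_A$ with $\mathcal{L}_{A,\Phi}\,\psi_A = \lambda_A\,\psi_A$. Dually, since $\mathcal{L}_{A,\Phi}^*$ acts on the compact convex set of probability measures, another fixed point argument yields an eigenprobability $\rho_A$ for $\mathcal{L}_{A,\Phi}^*$ with the same eigenvalue $\lambda_A$; normalizing so that $\int \psi_A\, d\rho_A = 1$ fixes the scaling.

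Next I would establish simplicity and the spectral gap. Replacing $A$ by the normalized potential $\tilde A = A + \log \psi_A - \log(\psi_A\circ \Phi) - \log\lambda_A$ makes the operator Markov ($\mathcal{L}_{\tilde A,\Phi}\,1 = 1$), and one shows that iterating $\mathcal{L}_{\tilde A,\Phi}$ converges to the rank-one projection $f \mapsto (\int f\, d\mu_A)\,1$ at an exponential rate on Lipschitz functions; this is where the $1/2$-contraction of the inverse branches combined with bounded distortion gives strict contraction in the Hilbert/projective metric. Exponential convergence to a rank-one limit forces $\lambda_A$ to be simple, isolated, and of maximal modulus in the spectrum restricted to Lipschitz functions. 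The hard part is precisely this spectral-gap step: one must verify that the distortion estimate \eqref{esque}, which is phrased in terms of preimages $\mathbf{a}x, \mathbf{a}y$ lying in a common cylinder via the second claim of Proposition \ref{ell}, really does yield the uniform contraction that the classical proof extracts from Gibbs-type control along cylinders. Because Proposition \ref{ell} guarantees that the partition $\mathcal{K}_n$ generated by $\Phi$ coincides with the cylinder partition, the combinatorics of preimages for $\Phi$ matches that for $\sigma$, so the estimates transfer verbatim.

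Finally, the identification $\log\lambda_{A,\Phi} = P_\Phi(A)$ follows from the variational principle. Setting $\mu_A = \psi_A\,\rho_A$, one checks $\mu_A \in \mathcal{M}(\Phi)$ and computes, for the normalized potential, that $h_\Phi(\mu_A) + \int A\, d\mu_A = \log\lambda_A$; the upper bound $P_\Phi(A) \le \log\lambda_A$ comes from the standard entropy inequality $h_\Phi(\nu) + \int A\,d\nu \le \log\lambda_A$ for every $\nu \in \mathcal{M}(\Phi)$, applied using the generating partition from Remark \ref{ccon}, with equality exactly at $\mu_A$. This uses the $\Phi$-entropy defined in \eqref{utum} and the fact that $\mathcal{K}_n$ is the cylinder partition, so the symbolic dynamics of $\Phi$ is as tractable as that of $\sigma$, and the classical pressure computation carries over without modification.
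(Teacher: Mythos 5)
Your proposal is correct and follows essentially the same route as the paper, which does not give a self-contained proof but instead states that the result follows from the classical Ruelle--Perron--Frobenius argument of \cite{PP} adapted to $\Phi$, using exactly the ingredients you invoke: the expanding property of Lemma \ref{thm:Phi expanding lemma}, the $r$ contracting inverse branches $\tau_j$, the bounded distortion estimates \eqref{esque}--\eqref{kkh}, and the cylinder structure of the $\Phi$-preimage partitions from Proposition \ref{ell}. Your write-up is in effect the detailed version of the adaptation the paper leaves to the reader, so there is nothing to correct.
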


The pressure $P_\Phi (- \log r)=0$ because in this case, the eigenvalue is equal to $1$.

\begin{remark} \label{infa} If a continuous function $f>0$ satisfies for some $\lambda>0$
$$\mathcal{L}_A (f) = \lambda f,$$
then, $\lambda$ is the main eigenvalue and $f$ is the main eigenfunction (see \cite{PP}).

\end{remark}
Given a continuous potential $A:\Omega \to \mathbb{R}$, we  can define the dual operator $\mathcal{L}_A^*= \mathcal{L}_{A,\Phi}^*$ on the space of the Borel finite measures on $\Omega$, as the operator that sends a measure $\nu$ to the measure $\mathcal{L}_A^*(\nu)$,  defined by
\begin{equation} \label{feij2}\int \psi\, d \mathcal{L}_A^*(v) =  \int \mathcal{L}_A (\psi)\, dv\,.
\end{equation}
for any $\psi \in \mathcal{C}$.

The operator  $\mathcal{L}_A^*$ acts on the space of all probabilities in $\Omega$. Note that if $v $ is $\Phi$-invariant, then, not necessarily  the probability  $\mathcal{L}_A^*(v)$ is $\Phi$-invariant (the same claim is valid for $\sigma$).

\begin{lemma} Suppose $A:\Omega \to \mathbb{R}$ is a  Lipschitz potential. Then, there exist a probability $\rho_{A}=\rho_{A,\Phi}$ on $\Omega$ and a real positive eigenvalue $\tilde{\lambda}_{A}=\tilde{\lambda}_{A,\Phi}$ such that

\begin{equation} \label{maru} \mathcal{L}_A^*  ( \rho_A) =\tilde{\lambda}_A \, \, \rho_A.
\end{equation}

Moreover, $\tilde{\lambda}_A= \lambda_A$.
\end{lemma}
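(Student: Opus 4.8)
The plan is to obtain the eigenprobability $\rho_A$ as a fixed point of a normalized version of the dual operator, via a standard Schauder--Tychonoff argument, and then to identify its eigenvalue with $\lambda_A$ using the eigenfunction $\psi_A$ from Theorem \ref{rer1}. First I would consider the map
\begin{equation*}
\nu \longmapsto \frac{\mathcal{L}_A^*(\nu)}{\mathcal{L}_A^*(\nu)(\Omega)} = \frac{\mathcal{L}_A^*(\nu)}{\int \mathcal{L}_A(1)\, d\nu}
\end{equation*}
acting on the space $\mathcal{P}(\Omega)$ of Borel probability measures on $\Omega$. Since $\Omega$ is compact and metrizable, $\mathcal{P}(\Omega)$ is compact and convex in the weak-$*$ topology, and this normalized map is continuous and maps $\mathcal{P}(\Omega)$ into itself (the denominator is strictly positive because $\mathcal{L}_A(1) \geq r\, e^{-\|A\|_\infty} > 0$). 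By the Schauder--Tychonoff fixed point theorem there is a fixed probability $\rho_A$, which by construction satisfies $\mathcal{L}_A^*(\rho_A) = \tilde{\lambda}_A\, \rho_A$ with $\tilde{\lambda}_A := \int \mathcal{L}_A(1)\, d\rho_A > 0$. This establishes \eqref{maru} with a positive eigenvalue.

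Next I would show $\tilde{\lambda}_A = \lambda_A$ by pairing the two eigen-relations. Apply the defining identity \eqref{feij2} to the eigenfunction $\psi_A$ from Theorem \ref{rer1}: on one hand
\begin{equation*}
\int \mathcal{L}_A(\psi_A)\, d\rho_A = \int \lambda_A\, \psi_A\, d\rho_A = \lambda_A \int \psi_A\, d\rho_A,
\end{equation*}
while on the other hand, using $\mathcal{L}_A^*(\rho_A) = \tilde{\lambda}_A\, \rho_A$,
\begin{equation*}
\int \mathcal{L}_A(\psi_A)\, d\rho_A = \int \psi_A\, d\,\mathcal{L}_A^*(\rho_A) = \tilde{\lambda}_A \int \psi_A\, d\rho_A.
\end{equation*}
Since $\psi_A > 0$ is strictly positive and $\rho_A$ is a probability measure, $\int \psi_A\, d\rho_A > 0$, so we may cancel this common factor and conclude $\tilde{\lambda}_A = \lambda_A$. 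This part is entirely routine once both eigen-objects are in hand, as it relies only on the adjoint relation and strict positivity.

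I expect the main obstacle to be the verification that the candidate operator genuinely preserves $\mathcal{P}(\Omega)$ with the required continuity and that the fixed point is a genuine probability rather than the zero measure; this is controlled precisely by the uniform positivity and boundedness of $\mathcal{L}_A(1)$, which follow from $A$ being continuous on the compact space $\Omega$ together with the fact that each point has exactly $r$ preimages under $\Phi$ (as established in \eqref{tryp2}). The expanding property of $\Phi$ (Lemma \ref{thm:Phi expanding lemma}) and the bounded distortion property \eqref{kkh} guarantee that the whole Ruelle-operator machinery transfers verbatim from $\sigma$ to $\Phi$, so no genuinely new analytic difficulty arises beyond bookkeeping; the only conceptual point is simply to run the dual fixed-point argument in parallel to the primal eigenfunction statement of Theorem \ref{rer1} and then to tie the two eigenvalues together by the pairing above.
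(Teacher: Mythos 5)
Your proposal is correct and takes essentially the same approach as the paper: the paper states this lemma without its own proof, deferring to an adaptation of \cite{PP}, and the argument there is precisely yours --- a Schauder--Tychonoff fixed point for the normalized dual operator $\nu \mapsto \mathcal{L}_A^*(\nu)/\int \mathcal{L}_A(1)\, d\nu$ on the weak-$*$ compact convex set of probabilities, followed by pairing the resulting eigenmeasure against the strictly positive eigenfunction $\psi_A$ to force $\tilde{\lambda}_A = \lambda_A$. Both the positivity of the denominator (each point has exactly $r$ preimages under $\Phi$) and the cancellation of $\int \psi_A \, d\rho_A > 0$ are handled correctly, so there is nothing to fix.
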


We call such $\rho_A$ the eigenprobability for $A$.

\smallskip

\begin{remark} \label{nno}The eigenprobability $\rho_A$ and the eigenfunction $\psi_A$ are dual entities, one is obtained from to $\mathcal{L}_A^*$ and the other is obtained from  $\mathcal{L}_A$; both for the same eigenvalue $\lambda_A$.
\end{remark}

\smallskip

One can show (see \cite{PP}) that $\psi_{A,\Phi} \, \rho_{A,\Phi}$ is the equilibrium probability for $P_\Phi(A).$ Moreover, if $w$ is such that $B= A + w - w \circ \Phi$, then the equilibrium probability for $B$ and $A$  are equal  and $P_\Phi(A)= P_\phi(B)$ (see \cite{PP}).

In \cite{Kim}  the authors considered transformations $\Phi_1,\Phi_2: \Omega \to \Omega$ which commute and we will adapt their results for our setting: $\Phi_1=\sigma$ and $\Phi_2=\Phi$.  For generality, we will state some claims in terms of $\Phi_1$ and $\Phi_2$ (assuming they satisfy the proper conditions).

\begin{remark} \label{ytre}Denote by $\Pi$ either $\Phi_1$ or $\Phi_2$.
We say that the Lipschitz potential $B$ is normalized for $\Pi$ if $\mathcal{L}_{B, \Pi}(1)=1.$

In this case
$$\lim_{n \to \infty } \mathcal{L}_{B, \Pi}^n ( f) = \int f d \mu,$$
where $\mu$ is the equilibrium potential for $B$ (see \cite{PP}). Moreover, $\mathcal{L}_{B, \Pi}^* (\mu)=\mu.$

If $B$ is normalized it is usual to denote by $J:\Omega \to (0,1)$ the function such that $B=\log J$. We call $J$ the Jacobian of the  probability $\mu$ such that $\mathcal{L}_{B, \Pi}^* (\mu)=\mathcal{L}_{\log J, \Pi}^* (\mu)=\mu.$

\end{remark}

In the case of $\Phi$, if $J$ is a Jacobian, then
for any $x$ we have
$$ \sum_{\Phi (y)=x} J(y)=1.$$
The Jacobian in some sense plays the role of a stochastic matrix. In Example 8 in \cite{Lo1} this is properly described for the case of the shift.

\smallskip

Given a Lipschitz potential $A$, the associated eigenvalue $\lambda_A$ and the corresponding  eigenfunction $\psi_A$, the potential
$$\bar A = A + \log \psi_A - \log (\psi_A \circ \Phi) - \log \lambda_A$$
is normalized, has pressure zero and it is called the normalization of $A$. The equilibrium probability for $A$ is the measure of maximal entropy, if and only if  $\bar A = -\log r$ (see the end of Section 2 in \cite{Lo1}); this is true for $\Phi$ and $\sigma$.

\smallskip

An interesting result claims the following (see Lemma 2.1 in \cite{Kim}):

\begin{lemma} \label{cafi} Suppose $\Phi_1$ and $\Phi_2$ commute, then
$$ \mathcal{L}_{A_1,\Phi_2} \circ  \mathcal{L}_{A_2,\Phi_1}=\mathcal{L}_{A_2,\Phi_1}  \circ \mathcal{L}_{A_1,\Phi_2},$$
if and only if,
$$A_1 - A_1 \circ \Phi_1 =  A_2 - A_2 \circ \Phi_2.$$

\end{lemma}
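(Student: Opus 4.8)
The plan is to prove the equivalence by computing both composite operators $\mathcal{L}_{A_1,\Phi_2} \circ \mathcal{L}_{A_2,\Phi_1}$ and $\mathcal{L}_{A_2,\Phi_1} \circ \mathcal{L}_{A_1,\Phi_2}$ explicitly as sums over preimages, and then comparing the two resulting expressions term by term. The key structural fact I will exploit is that $\Phi_1$ and $\Phi_2$ commute, so that the set of points $x$ with $\Phi_2(\Phi_1(x)) = y$ coincides with the set of points with $\Phi_1(\Phi_2(x)) = y$, and moreover there is a canonical bijection between the intermediate preimages. Concretely, a point contributing to the left-hand composition is reached by first choosing $z$ with $\Phi_2(z) = y$ and then $x$ with $\Phi_1(x) = z$; a point contributing to the right-hand composition is reached by first choosing $w$ with $\Phi_1(w) = y$ and then $x$ with $\Phi_2(x) = w$. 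Commutativity guarantees that both procedures enumerate exactly the solutions of $\Phi_1(\Phi_2(x)) = \Phi_2(\Phi_1(x)) = y$, with matching multiplicities.

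The first step is to write out, for a test function $f$ and a point $y$,
\begin{equation}
\big(\mathcal{L}_{A_1,\Phi_2} \circ \mathcal{L}_{A_2,\Phi_1}\big)(f)(y) = \sum_{\Phi_2(z)=y}\; e^{A_1(z)} \sum_{\Phi_1(x)=z} e^{A_2(x)} f(x) = \sum_{x:\, \Phi_2(\Phi_1(x))=y} e^{A_1(\Phi_1(x)) + A_2(x)}\, f(x),
\end{equation}
and symmetrically
\begin{equation}
\big(\mathcal{L}_{A_2,\Phi_1} \circ \mathcal{L}_{A_1,\Phi_2}\big)(f)(y) = \sum_{x:\, \Phi_1(\Phi_2(x))=y} e^{A_2(\Phi_2(x)) + A_1(x)}\, f(x).
\end{equation}
Since $\Phi_1 \circ \Phi_2 = \Phi_2 \circ \Phi_1$, the two index sets $\{x : \Phi_2(\Phi_1(x))=y\}$ and $\{x : \Phi_1(\Phi_2(x))=y\}$ are identical, so the two operators agree precisely when the exponential weights agree for every such $x$, i.e. when $A_1(\Phi_1(x)) + A_2(x) = A_2(\Phi_2(x)) + A_1(x)$ holds for all $x$ in the union of these preimage sets (for all $y$), which is all of $\Omega$.

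The second step is to rearrange this pointwise condition. The equality $A_1(\Phi_1(x)) + A_2(x) = A_2(\Phi_2(x)) + A_1(x)$ for all $x$ is exactly
\begin{equation}
A_2(x) - A_2(\Phi_2(x)) = A_1(x) - A_1(\Phi_1(x)),
\end{equation}
that is, $A_2 - A_2 \circ \Phi_2 = A_1 - A_1 \circ \Phi_1$, which is the stated cohomological condition. This gives both implications at once: if the condition holds the weights coincide and the operators commute, and conversely, if the operators commute then applying the identity to indicator-type or sufficiently rich test functions $f$ forces the weights to coincide on every preimage, hence everywhere by surjectivity of the dynamics.

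The main obstacle to make fully rigorous is the converse direction, where one must deduce the pointwise weight equality from equality of the two operators as functionals on $\mathcal{C}$. The subtlety is that a single point $x$ may not be isolable: several preimages can share the same image $y$, so one cannot directly read off the weight at an individual $x$ by evaluating at one test function. I would handle this by fixing $y$ and choosing a continuous $f$ supported near a prescribed preimage $x_0$ (using expansivity and the local injectivity of the inverse branches $\tau_j$, which separate preimages into distinct cylinders), so that in the limit only the term at $x_0$ survives in both sums; comparing these isolated terms yields $A_1(\Phi_1(x_0)) + A_2(x_0) = A_2(\Phi_2(x_0)) + A_1(x_0)$. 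Running this over all $x_0$ and using that every point arises as such a preimage (by surjectivity of $\Phi_1,\Phi_2$) establishes the identity globally. This is the step where the geometry of the preimage structure from Proposition \ref{ell} does the real work.
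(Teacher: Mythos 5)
Your proposal is correct, and it is worth noting that it is actually more complete than what the paper itself provides: the paper does not prove Lemma \ref{cafi} at all, but defers it to Lemma 2.1 of \cite{Kim}, and the only piece it records is the sufficiency direction, which appears as the computation \eqref{tryp37} inside the proof of Theorem \ref{maint} (there the key observation is that, under the cohomological identity, both compositions coincide with the single Ruelle operator $\mathcal{L}_{A_2 + A_1\circ\Phi_1,\,\Phi_2\circ\Phi_1}$ of the composed map). Your first step is exactly that computation, phrased as a term-by-term identification of the collapsed sums over $\{x:\Phi_2(\Phi_1(x))=y\}=\{x:\Phi_1(\Phi_2(x))=y\}$; the bijection $(z,x)\mapsto x$ with $z=\Phi_1(x)$ is handled correctly, so there are no multiplicity issues. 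The genuinely new content in your write-up is the converse, which the paper omits. Your isolation argument works, but it can be simplified: since $\Phi_1$ and $\Phi_2$ are finite-to-one, the fiber $(\Phi_2\circ\Phi_1)^{-1}(y)$ is a finite set of distinct points in a compact metric space, so Urysohn's lemma directly gives a continuous $f$ equal to $1$ at a chosen preimage $x_0$ and vanishing at the remaining finitely many preimages; no limiting procedure, no appeal to expansivity, to the inverse branches $\tau_j$, or to Proposition \ref{ell} is needed. Evaluating the operator identity at this $f$ yields
\begin{equation*}
e^{A_1(\Phi_1(x_0))+A_2(x_0)}=e^{A_2(\Phi_2(x_0))+A_1(x_0)},
\end{equation*}
and since every $x_0\in\Omega$ lies in the fiber over $y=\Phi_2(\Phi_1(x_0))$, the identity $A_1-A_1\circ\Phi_1=A_2-A_2\circ\Phi_2$ holds on all of $\Omega$. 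In short: your sufficiency argument coincides with the paper's \eqref{tryp37}, and your necessity argument correctly supplies the half that the paper only obtains by citation.
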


Below, in Theorem \ref{maint}, we will fill in some details missing on  the proof of Theorem 2.2  in  \cite{Kim}  that we believe would be appropriate.

The next theorem contemplates the case
$$ A_1 - A_1 \circ \sigma =  A_2 - A_2 \circ \Phi,$$
taking  $w=0.$ Therefore, $\mathcal{M} (\Phi,\sigma)$ is not empty for the case of Example  \ref{kli}.

 \smallskip

Given  Lipschitz potentials $A_1$ and $A_2$, we ask: what are sufficient conditions on $A_1,A_2$ in such way that  $\mu_{  A_2, \Phi_1 }= \mu_{  A_1, \Phi_2}$?

\begin{theorem} \label{maint}
 Suppose $\Phi_1$ and $\Phi_2$ commute, and assume that  $A_1$ and $A_2$ are Lipschitz functions.
 If for some Lipschitz function $w$ we get
\begin{equation}\label{tyr} (A_1 - A_1 \circ \Phi_1)- (  A_2 - A_2 \circ \Phi_2) =   (w - w \circ \Phi_1) -   (w - w \circ \Phi_1) \circ  \Phi_2 ,
\end{equation}
 Then,  $\mu_{  A_2, \Phi_1}= \mu_{  A_1, \Phi_2}$.

\end{theorem}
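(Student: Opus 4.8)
The plan is to reduce the general cohomological hypothesis to the ``$w=0$'' case covered by Lemma \ref{cafi}, and then exploit commutativity of the two Ruelle operators to force their eigendata to coincide. First I would set $\tilde{A}_2 := A_2 + (w - w\circ\Phi_1)$ and rewrite \eqref{tyr} as $A_1 - A_1\circ\Phi_1 = \tilde{A}_2 - \tilde{A}_2\circ\Phi_2$. Since $\tilde{A}_2 - A_2 = w - w\circ\Phi_1$ is a $\Phi_1$-coboundary, the coboundary invariance of the equilibrium state (recorded after Remark \ref{nno}, valid for both dynamics) gives $\mu_{\tilde{A}_2,\Phi_1} = \mu_{A_2,\Phi_1}$. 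Hence it suffices to prove $\mu_{\tilde{A}_2,\Phi_1} = \mu_{A_1,\Phi_2}$, i.e. to treat exactly the configuration in which the two potentials satisfy the hypothesis of Lemma \ref{cafi}.

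Next, by Lemma \ref{cafi} the operators $\mathcal{L}_{\tilde{A}_2,\Phi_1}$ and $\mathcal{L}_{A_1,\Phi_2}$ commute. Let $\psi := \psi_{\tilde{A}_2,\Phi_1} > 0$ with $\mathcal{L}_{\tilde{A}_2,\Phi_1}\psi = \lambda_1\psi$. Applying $\mathcal{L}_{A_1,\Phi_2}$ and using commutativity yields $\mathcal{L}_{\tilde{A}_2,\Phi_1}(\mathcal{L}_{A_1,\Phi_2}\psi) = \lambda_1(\mathcal{L}_{A_1,\Phi_2}\psi)$, so $\mathcal{L}_{A_1,\Phi_2}\psi$ is an eigenfunction of $\mathcal{L}_{\tilde{A}_2,\Phi_1}$ for the simple eigenvalue $\lambda_1$; by simplicity (Theorem \ref{rer1}) there is a constant $c$ with $\mathcal{L}_{A_1,\Phi_2}\psi = c\psi$. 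Since $\psi>0$ and the Ruelle operator is positive, $\mathcal{L}_{A_1,\Phi_2}\psi>0$, so $c>0$; Remark \ref{infa} then identifies $c = \lambda_{A_1,\Phi_2}$ and $\psi = \psi_{A_1,\Phi_2}$ up to scale. Thus the two eigenfunctions coincide.

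I would then run the dual argument verbatim on the adjoints. The operators $\mathcal{L}^*_{\tilde{A}_2,\Phi_1}$ and $\mathcal{L}^*_{A_1,\Phi_2}$ also commute, and with $\rho := \rho_{\tilde{A}_2,\Phi_1}$ satisfying $\mathcal{L}^*_{\tilde{A}_2,\Phi_1}\rho = \lambda_1\rho$, commutativity shows $\mathcal{L}^*_{A_1,\Phi_2}\rho$ is a positive eigenmeasure of $\mathcal{L}^*_{\tilde{A}_2,\Phi_1}$ for $\lambda_1$; uniqueness of the eigenprobability forces $\mathcal{L}^*_{A_1,\Phi_2}\rho = \lambda_{A_1,\Phi_2}\rho$, i.e. $\rho = \rho_{A_1,\Phi_2}$. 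Finally, since the equilibrium state is $\mu = \psi\rho$ after normalization (stated after Remark \ref{nno}) and both factors now agree, I conclude $\mu_{\tilde{A}_2,\Phi_1} = \mu_{A_1,\Phi_2}$; combined with the first paragraph this gives $\mu_{A_2,\Phi_1} = \mu_{A_1,\Phi_2}$.

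The step I expect to be the main obstacle is the dual eigenprobability argument: it needs a uniqueness (or simplicity) statement for the eigenprobability of the adjoint operator that is not isolated as its own lemma in the excerpt, where only existence is asserted. This is standard for expanding maps with Lipschitz potentials, but making it rigorous requires either invoking that uniqueness or, alternatively, bypassing the dual step by comparing the normalized potentials $\bar A$ built from the now-common eigenfunction $\psi$ and checking that they induce the same Jacobian, hence the same $\mu$. One must also take care that positivity is genuinely preserved, so that Remark \ref{infa} applies, and that the scalar normalization of $\psi$ and the mass-one normalization of $\rho$ are matched consistently across the two dynamics.
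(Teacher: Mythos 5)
Your proposal is correct in substance, and its first half is essentially the paper's own argument: the paper likewise passes to the potential $A_2 + (w - w\circ\Phi_1)$, invokes Lemma \ref{cafi} to get commutativity of $\mathcal{L}_{A_1,\Phi_2}$ and $\mathcal{L}_{A_2+(w-w\circ\Phi_1),\Phi_1}$, and uses positivity together with Remark \ref{infa} (simplicity) to conclude that the two operators share their positive eigenfunction; the only cosmetic difference is that the paper first normalizes $A_2$, which makes the common eigenfunction explicit, namely $e^{-w}$, and yields $w = -\log\varphi + c$. Where you genuinely diverge is the eigenmeasure half. You dualize the commutation relation and appeal to uniqueness of the eigenprobability of the adjoint operator, a statement the paper never records (the Lemma containing \eqref{maru} asserts existence only), and you correctly flag this as the weak point. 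The paper instead takes exactly the bypass you sketch at the end: having identified $w = -\log\varphi$, it observes that $A_1 - w + w\circ\Phi_2$ is the $\Phi_2$-normalization of $A_1$, and then combines commutativity with the convergence $\mathcal{L}^n_{B,\Pi}(h) \to \int h\, d\mu$ of Remark \ref{ytre} to show directly that $\rho = \mu_{A_2,\Phi_1}$ satisfies $\mathcal{L}^*_{A_1 - w + w\circ\Phi_2,\,\Phi_2}(\rho)=\rho$, hence is the $\Phi_2$-equilibrium for $A_1$. It is worth noting that Remark \ref{ytre} also closes your gap within the paper's toolkit: conjugating $\mathcal{L}_{\tilde A_2,\Phi_1}$ by the common eigenfunction $\psi$ produces a normalized operator whose dual has a unique fixed probability (any fixed $\nu$ satisfies $\int h\, d\nu = \int \mathcal{L}^n h\, d\nu \to \int h\, d\mu$), and pulling back through the conjugation gives precisely the uniqueness of eigenprobabilities (and of their eigenvalue) that your dual argument invokes. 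So both routes run on the same underlying facts; the paper's ordering of steps simply avoids ever needing dual uniqueness as a stand-alone ingredient, at the price of the explicit computation identifying $e^{-w}$ as an eigenfunction, which your more abstract argument does not require.
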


\begin{proof} Note that if $(B_1 - B_1 \circ \Phi_1) =  (B_2 - B_2 \circ \Phi_2) $, then for any $f$
\begin{equation} \label{tryp37}\mathcal{L}_{B_1, \Phi_2} ( \mathcal{L}_{B_2, \Phi_1}( f))  = \mathcal{L}_{B_2 + B_1 \circ  \Phi_1,\Phi_2 \circ \Phi_1} (f)(y)=\mathcal{L}_{B_2, \Phi_1} ( \mathcal{L}_{B_1, \Phi_2}( f))  .
\end{equation}

 Therefore, the Ruelle operators commute.

 Condition \eqref{tyr} implies that
 $ \mathcal{L}_{A_1,\Phi_2}$ and $ \mathcal{L}_{A_2 + ( w - w \circ \Phi_1),\Phi_1}$ commute.

 Suppose that $\varphi>0$ is eigenfunction for $\mathcal{L}_{A_1,\Phi_2}$, that is $\mathcal{L}_{A_1,\Phi_2}(\varphi) = \lambda \varphi$, and $\psi>0$ is eigenfunction for $\mathcal{L}_{A_2,\Phi_1}$, that is  $\mathcal{L}_{A_2,\Phi_1}(\psi) = \beta \psi.$

 Moreover, we set  $\mathcal{L}_{A_2,\Phi_1}^*(\rho) = \beta \rho$ and  $\mathcal{L}_{A_1,\Phi_2}^* (\nu) = \lambda \nu.$ The equilibrium probability for $A_2$ is $\psi\,\rho$.

 Without loss of generality, we can assume that $\beta=\lambda=1$.  Indeed. Replacing $A_1$ by $A_1 - \log \lambda$ and $A_2 $ by $A_2 - \log \beta$ we keep the relation \eqref{tyr} for the same $w$ (and, respectively, the equilibrium properties will not change). We can also assume that  $A_2$ is $\Phi_1$ normalized, that is $\psi=1,$
which means $\rho=  \mu_{  A_2, \Phi_1,1 }.$

 The eigenfunction for  $\mathcal{L}_{A_2 + ( w - w \circ \Phi_1),\Phi_1}$ is
 $f= \,e^{-w}$, indeed
 $$ \mathcal{L}_{A_2 + ( w - w \circ \Phi_1),\Phi_1} ( f)(x)=$$
$$ \mathcal{L}_{A_2 + ( w - w \circ \Phi_1),\Phi_1} ( \,e^{-w} )(x)=\sum_{ \Phi_1 (y)=x}
e^{A_2 (y)} \frac{ e^{ w(y)}}{e^{w \circ \Phi_1(y) }} e^{-w(y)} = $$
$$ \sum_{ \Phi_1 (y)=x}
e^{A_2 (y)} \frac{ e^{ w(y)}}{e^{w (x) }} e^{-w(y)}=  \sum_{ \Phi_1 (y)=x}
e^{A_2 (y)} \frac{ 1}{e^{w (x) }} =  \frac{ 1}{e^{w (x) }}\, . $$

 Now, note that
 $$ \mathcal{L}_{A_2 + ( w - w \circ \Phi_1),\Phi_1}  ( \mathcal{L}_{A_1,\Phi_2}(\frac{ 1}{e^{w }}\, )   )  = \mathcal{L}_{A_1,\Phi_2} ( \mathcal{L}_{A_2 + ( w - w \circ \Phi_1),\Phi_1}  (\frac{ 1}{e^{w }}\, ) \,) =$$
 $$  \mathcal{L}_{A_1 ,\Phi_2} ( \frac{ 1}{e^{w }}\, ). $$

 Therefore, $g= \mathcal{L}_{A_1 ,\Phi_2} ( \frac{ 1}{e^{w }}\, )  >0$ is eigenfunction for $\mathcal{L}_{A_2 + ( w - w \circ \Phi_1),\Phi_1} $, and associated to the eigenvalue $1$. From Remark \ref{infa} we get that $g$ is colinear with $\frac{ 1}{e^{w }}\, $. That is   $\mathcal{L}_{A_1 ,\Phi_2} ( \frac{ 1}{e^{w }}\, )  = \gamma \,
\frac{ 1}{e^{w }}\, $, for some $\gamma>0$.  Once more from  Remark \ref{infa} we get that $\gamma=1$, and the eigenfunction $\varphi$ for $\mathcal{L}_{A_1 ,\Phi_2}$ is colinear with $ \frac{ 1}{e^{w }}\, .$ This shows that $w=  - \log \varphi +c.$

This shows that \eqref{tyr} is true replacing $w$ by $- \log \varphi.$

Then,  $ \mathcal{L}_{A_1,\Phi_2}$ and $ \mathcal{L}_{A_2 + (\,( - \log \varphi)  -  (- \log \varphi)  \circ \Phi_1),\Phi_1}$ commute.

Expression \eqref{tyr} is equivalent to

$$  (A_2 - A_2 \circ \Phi_2)- (  A_1 - A_1 \circ \Phi_1) =$$
\begin{equation}\label{tyro}    ((-w) - (-w) \circ \Phi_2) -   ((-w) - (-w) \circ \Phi_2) \circ  \Phi_1.
\end{equation}

Therefore,
 $ \mathcal{L}_{A_2,\Phi_1}$ and $ \mathcal{L}_{A_1   - w + w \circ \Phi_2,\Phi_2}$ commute.

Note that the potential $A_1 - w + w \circ \Phi_2 - \log \gamma= A_1 - w + w \circ \Phi_2 $ is $\Phi_2$ normalized.

From Remark \ref{ytre} we get for any continuous function $h$
$$\lim_{n \to \infty } \mathcal{L}_{A_2, \Phi_1}^n ( \mathcal{L}_{A_1   - w + w \circ \Phi_2- \log \gamma,\Phi_2})(h) = \int  \mathcal{L}_{A_1   - w + w \circ \Phi_2- \log \gamma,\Phi_2}(h)  d \rho,$$

From the commutative property, we get from Remark \ref{ytre}
$$\lim_{n \to \infty }   \mathcal{L}_{A_1   - w + w \circ \Phi_2- \log \gamma,\Phi_2} (\mathcal{L}_{A_2, \Phi_1}^n (h)) =\mathcal{L}_{A_1   - w + w \circ \Phi_2- \log \gamma,\Phi_2} (\int h d \rho)= \int h d \rho. $$

This shows that   $\mathcal{L}_{A_1   - w + w \circ \Phi_2,\Phi_2}^* (\rho)=\rho.$ Therefore, we were able to show that $\rho$ is the $\Phi_2$-equilibrium probability for $A_1$. That is,  $\mu_{  A_2, \Phi_1,1 }= \mu_{  A_1, \Phi_2,1 }.$

\end{proof}

\begin{remark} \label{difr}
$(A - A \circ \sigma)$ is in some sense the discrete time version of the $\sigma$-derivative of the ``function'' $A$. Note that in the case $\mu$ is $\sigma$-invariant for the shift we get $\int (A - A \circ \sigma) d \mu =0$, which corresponds to $\int_a^b f'(x) dx =0$, when $f$ is periodic in $[a,b]$.

The hypothesis
$$(A_1 - A_1 \circ \sigma)- (  A_2 - A_2 \circ \sigma) =   (w - w \circ \sigma) -   (w - w \circ \sigma) \circ  \sigma ,$$
is a discrete version of the postulation
$$f' - g' = \frac{d^2}{d x^2} w.$$

The hypothesis
$$(A_1 - A_1 \circ \Phi_1)- (  A_2 - A_2 \circ \Phi_2) =   (w - w \circ \Phi_1) -   (w - w \circ \Phi_1) \circ  \Phi_2 , $$
is in some sense a kind of mixed derivatives expression.

\end{remark}

The next result is a kind of converse of Theorem \ref{maint}.

\begin{theorem} \label{maint1}
 Suppose $\Phi_1$ and $\Phi_2$ commute, and assume that  $A_1$ and $A_2$ are Lipschitz functions.
 Also assume that $ f d \mu = g d \nu$, where $f,\mu$ are, respectively, the eigenfunction and eigen-probability for $\mathcal{L}_{A_1 ,\Phi_2} $, and $g,\nu$ are, respectively, the eigenfunction and eigenprobability for $\mathcal{L}_{A_2 ,\Phi_1} $.

 Then, the Lipschitz function $w= \log g - \log f$ satisfies
\begin{equation}\label{tyr689} (A_1 - A_1 \circ \Phi_1)- (  A_2 - A_2 \circ \Phi_2) =   (w - w \circ \Phi_1) -   (w - w \circ \Phi_1) \circ  \Phi_2 .
\end{equation}

\end{theorem}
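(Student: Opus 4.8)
The plan is to reduce the identity \eqref{tyr689} to a single cohomological relation between the two \emph{normalized} potentials, and then to produce that relation from the commutativity of $\Phi_1,\Phi_2$ via Lemma \ref{cafi}. Let $\lambda,\beta$ be the eigenvalues of $\mathcal{L}_{A_1,\Phi_2}$ and $\mathcal{L}_{A_2,\Phi_1}$, and put
\[ \bar A_1 = A_1 + \log f - \log(f\circ\Phi_2) - \log\lambda, \qquad \bar A_2 = A_2 + \log g - \log(g\circ\Phi_1) - \log\beta, \]
so that $\bar A_1$ is $\Phi_2$-normalized and $\bar A_2$ is $\Phi_1$-normalized (cf. Remark \ref{ytre}); both are Lipschitz since $f,g$ are Lipschitz eigenfunctions by Theorem \ref{rer1}. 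The first step is purely algebraic: substitute $A_1,A_2$ in terms of $\bar A_1,\bar A_2$ into the left-hand side of \eqref{tyr689} and use $\Phi_1\circ\Phi_2=\Phi_2\circ\Phi_1$. One checks that all six composed $\log f$ and $\log g$ terms reassemble \emph{exactly} into $(w-w\circ\Phi_1)-(w-w\circ\Phi_1)\circ\Phi_2$ with $w=\log g-\log f$. Hence \eqref{tyr689} holds if and only if
\[ \bar A_1 - \bar A_1\circ\Phi_1 = \bar A_2 - \bar A_2\circ\Phi_2. \qquad (\star) \]

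The heart of the argument is to establish $(\star)$. By the hypothesis $f\,d\mu = g\,d\nu$, the common measure $m := f\,d\mu = g\,d\nu$ is at once the $\Phi_2$-equilibrium state for $A_1$ and the $\Phi_1$-equilibrium state for $A_2$; in particular $m\in\mathcal{M}(\Phi_1,\Phi_2)$, and, being the equilibrium state of a Lipschitz potential for a transitive expanding map, $m$ has full support. Writing $P_1:=\mathcal{L}_{\bar A_1,\Phi_2}$ and $P_2:=\mathcal{L}_{\bar A_2,\Phi_1}$, both are normalized and satisfy $P_i^*m=m$ (Remark \ref{ytre}). I would then record the adjunction identities, valid for a normalized operator whose dual fixes $m$: for continuous $h,k$,
\[ \int P_1(h)\,k\,dm = \int h\,(k\circ\Phi_2)\,dm, \qquad \int P_2(h)\,k\,dm = \int h\,(k\circ\Phi_1)\,dm. \]
These follow from the pointwise identity $P_1\big(h\cdot(k\circ\Phi_2)\big)=k\,P_1(h)$ (a direct consequence of $\Phi_2$ being the summation index) together with $\int P_1(G)\,dm=\int G\,dm$, and likewise for $P_2$. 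Thus $P_1,P_2$ are the $L^2(m)$-adjoints of the Koopman operators $k\mapsto k\circ\Phi_2$ and $k\mapsto k\circ\Phi_1$.

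Since $\Phi_1$ and $\Phi_2$ commute, these Koopman operators commute, so their adjoints commute. Concretely, iterating the adjunction gives $\int P_1P_2(h)\,k\,dm=\int h\,(k\circ\Phi_2\circ\Phi_1)\,dm$ and $\int P_2P_1(h)\,k\,dm=\int h\,(k\circ\Phi_1\circ\Phi_2)\,dm$, and these agree because $\Phi_2\circ\Phi_1=\Phi_1\circ\Phi_2$. Therefore $\int P_1P_2(h)\,k\,dm=\int P_2P_1(h)\,k\,dm$ for all continuous $h,k$; since $P_1P_2(h)$ and $P_2P_1(h)$ are continuous and $m$ has full support, this upgrades to $P_1P_2=P_2P_1$ as operators. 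Applying Lemma \ref{cafi} to the commuting pair $\mathcal{L}_{\bar A_1,\Phi_2},\mathcal{L}_{\bar A_2,\Phi_1}$ yields precisely $(\star)$, which together with the first-paragraph bookkeeping proves \eqref{tyr689}.

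I expect the main obstacle to be the two measure-theoretic upgrades: first, justifying that the normalized transfer operators are genuinely the adjoints of the Koopman operators with respect to $m$, and second, passing from $m$-almost-everywhere equality to everywhere equality, which is exactly where full support of the equilibrium state $m$ is needed. Once the adjunction and full support are in hand, the commutation and the appeal to Lemma \ref{cafi} are immediate; the algebra collapsing the $\log f,\log g$ terms into the mixed coboundary is routine but must be carried out carefully, since all six composed terms have to cancel in the right pattern.
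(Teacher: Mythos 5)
Your argument is correct, but note that there is no in-paper proof to compare it with: the paper proves nothing here and simply defers to Theorem 2.2 of \cite{Kim}. So your proposal stands as a self-contained alternative to that citation. The route in \cite{Kim} is essentially the chain rule for Jacobians of the common Gibbs measure: after normalizing, $e^{-\bar A_1}$ and $e^{-\bar A_2}$ are the Jacobians of $m$ under $\Phi_2$ and $\Phi_1$, and multiplicativity of Jacobians along $\Phi_1\circ\Phi_2=\Phi_2\circ\Phi_1$ gives $\bar A_1-\bar A_1\circ\Phi_1=\bar A_2-\bar A_2\circ\Phi_2$ almost everywhere. Your proof packages the same mechanism operator-theoretically: the normalized transfer operators $P_1,P_2$ are the $L^2(m)$-adjoints of the commuting Koopman operators of $\Phi_2$ and $\Phi_1$, hence commute, and Lemma \ref{cafi} converts operator commutation into that cohomological identity; your preliminary bookkeeping (which I checked: the six composed $\log f,\log g$ terms do reassemble into the mixed coboundary, using commutativity of $\Phi_1,\Phi_2$ exactly once) correctly shows that this identity is equivalent to \eqref{tyr689}. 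What your route buys is that it stays inside the paper's own toolkit (Remark \ref{ytre}, Lemma \ref{cafi}) and makes transparent where the hypothesis $f\,d\mu=g\,d\nu$ enters, namely in producing a single measure fixed by both dual operators; what the Jacobian route buys is brevity, if one is willing to work with a.e.\ Radon--Nikodym identities. The only ingredient you assert without justification is full support of $m$ (used twice, to upgrade $m$-a.e.\ equalities of continuous functions to everywhere equalities). This is true in the paper's setting, and a one-line proof is available: by Proposition \ref{ell}, every point of $\Omega$ has a $\Phi^n$-preimage in every cylinder of length $n$, so the eigenprobability satisfies $\rho(C)=\lambda^{-n}\int \mathcal{L}^n(\chi_C)\,d\rho>0$ for every cylinder $C$, and the eigenfunction is strictly positive; since the paper never states this fact, you should include it (the a.e.\ argument in \cite{Kim} needs the same point).
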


For proof see  Theorem 2.2 in \cite{Kim}.

\begin{remark} \label{maint5} When asking properties derived from the equality between  two equilibrium probabilities  for
respectively, $A_1,\Phi_2$ and $A_2, \Phi_1$, we can assume in the hypothesis of Theorem  \ref{maint1} that $f=1=g$ (that is, $A_1$ is $\Phi_2$ normalized and $A_1$ is $\Phi_2$ normalized). In this way, from the hypotheses $\mu=\nu$, we derive the property
\begin{equation}\label{tyr659} (A_1 - A_1 \circ \Phi_1)= (  A_2 - A_2 \circ \Phi_2) .
\end{equation}

\end{remark}

\section{The involution Kernel:  $\sigma$ and $\Phi$ duality} \label{dul}

The eigenfunction and the eigenprobability are dual concepts (see Remark \ref{nno}) and in this section, we will address results in some way related to this claim.

 We  will adapt the reasoning of \cite{PP}, Proposition 1.2 in Section 1, to show that a natural skew dynamics associated with cellular automata (and the shift) have the special property that a potential $f(x,y)$,  initially defined at the skew structure $\{1,2, ..., k\}^{\mathbb{N}} \times \{1,2, ..., k\}^{\mathbb{N}}$, is co-homologous to another one $g(x,y)$ that depends only of the ``future" coordinates $x$. This will generalize the well known result for the  shift   $\hat{\sigma}$ acting on $\{1,2, ..., k\}^{\mathbb{Z}}$, as in \cite{PP}. Here we represent
$$\{1,2, ..., k\}^{\mathbb{Z}}\simeq \{1,2, ..., k\}^{\mathbb{N}} \times \{1,2, ..., k\}^{\mathbb{N}}:=\{(x,y) | x, y \in \{1,2, ..., k\}^{\mathbb{N}}\}$$
to avoid the identification of the coordinate correspondent to zero when we define our dynamics. In this notation, $x$ is the ``future" and $y$ is the ``past" because we could relabel $y$ as the negative coordinates in $\{1,2, ..., k\}^{\mathbb{Z}}$.

The  inverse branch $\tau_j:  \Omega \to \Omega$ is given by the formula
   $$\tau_j(x)=y \text{ if } \Phi(y)=x \text{ and  } y_1=j.$$

In the next result, we assume that $x \to A(x)$ is a function of the future variables $x$ and $y \to A^*(y)$ is a function of the past variable $y$. One of the purposes of Subsection \ref{uuy} is to derive such result from a more general reasoning which is of interest in itself.

\begin{proposition} \label{kjr35} Assume that $A$ depending just of the future coordinates $x$, and $A^*$ which depends on past coordinates $y$, are such that, there exist $W:\Omega \times \Omega$ satisfying for $i=1,2,...,r$, $x,y\in \Omega$
\begin{equation} \label{pit11}(A+W)(ix,y)= (A^*+W)(x, \tau_{i}(y)).
\end{equation}
Then, if $\rho_{A^*}$ is the eigenprobability for the $\Phi$-Ruelle operator $\mathcal{L}_{A^*,\Phi}$ of the potential $A^*$, then
$$\phi(x) = \int e^{W(x,y)} d \rho_{A^*}(y)$$
is the eigenfunction for the $\sigma$-Ruelle operator $ \mathcal{L}_{A,\sigma}$ of the potential $A$.

Moreover, given a Lipschitz function $y \to A^*(y)$, there exists a Lipschitz function $x \to A(x)$ and $W$ as in \eqref{pit11}.
\end{proposition}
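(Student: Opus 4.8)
The statement has two parts, which I would handle separately. For the eigenfunction identity the plan is a direct computation that intertwines the two Ruelle operators through the kernel $e^W$. Since the $\sigma$-preimages of $x$ are exactly the points $ix$, $i=1,\dots,r$, I start from
$$\mathcal{L}_{A,\sigma}\phi(x)=\sum_{i=1}^r e^{A(ix)}\phi(ix)=\sum_{i=1}^r\int e^{(A+W)(ix,y)}\,d\rho_{A^*}(y),$$
folding $e^{A(ix)}$ inside the integral defining $\phi$. The hypothesis \eqref{pit11} replaces the integrand by $e^{(A^*+W)(x,\tau_i y)}$, and because the $\Phi$-preimages of $y$ are precisely $\tau_1(y),\dots,\tau_r(y)$, summing over $i$ produces $\mathcal{L}_{A^*,\Phi}$ applied to the function $y\mapsto e^{W(x,y)}$ with $x$ held fixed. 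Hence
$$\mathcal{L}_{A,\sigma}\phi(x)=\int \mathcal{L}_{A^*,\Phi}\big(e^{W(x,\cdot)}\big)(y)\,d\rho_{A^*}(y),$$
and the eigenmeasure relation $\mathcal{L}_{A^*,\Phi}^*\rho_{A^*}=\lambda_{A^*}\rho_{A^*}$ together with the defining duality \eqref{feij2} collapses this to $\lambda_{A^*}\,\phi(x)$. Since $\phi>0$ is continuous, Remark \ref{infa} then identifies $\phi$ as the main eigenfunction $\psi_{A,\sigma}$. The only care needed is the (trivial) interchange of a finite sum with the integral and the continuity and positivity of $\phi$, which hold as soon as $W$ is continuous and bounded.

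For the existence part I would reconstruct the involution-kernel argument of \cite{PP}, Proposition 1.2, adapted to the skew product $\hat{\Phi}(z,y):=(\sigma z,\tau_{z_1}y)$ on $\Omega\times\Omega$; this map is a homeomorphism, with inverse $(x,y')\mapsto (y_1'x,\Phi y')$, and it contracts the past coordinate because each $\tau_j$ has Lipschitz constant $1/2$, so that $\pi_2\hat{\Phi}^m(z,\cdot)=\tau_{z_m}\circ\cdots\circ\tau_{z_1}$ has Lipschitz constant $2^{-m}$ (here $\pi_2$ denotes projection onto the past coordinate). Rewriting \eqref{pit11} as the cohomological identity $A(z)=A^*(\tau_{z_1}y)+W(\hat{\Phi}(z,y))-W(z,y)$, I fix a base point $\bar y\in\Omega$ and set
$$W(z,y):=\sum_{m=1}^\infty\Big[A^*\big(\pi_2\hat{\Phi}^m(z,y)\big)-A^*\big(\pi_2\hat{\Phi}^m(z,\bar y)\big)\Big].$$
The estimate $\big|A^*(\pi_2\hat{\Phi}^m(z,y))-A^*(\pi_2\hat{\Phi}^m(z,\bar y))\big|\le c\,2^{-m}d(y,\bar y)$, with $c$ the Lipschitz constant of $A^*$, shows that the series converges uniformly and geometrically and that $W$ is Lipschitz.

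The decisive step is to check that
$$A(z):=A^*(\tau_{z_1}y)+W(\hat{\Phi}(z,y))-W(z,y)$$
does not depend on $y$, so that it is genuinely a potential of the future coordinates. Substituting the series and using $\hat{\Phi}^m(\hat{\Phi}(z,y))=\hat{\Phi}^{m+1}(z,y)$, the $y$-dependent summands telescope away: the term $A^*(\tau_{z_1}y)=A^*(\pi_2\hat{\Phi}(z,y))$ and the shifted sum $\sum_{m\ge 2}A^*(\pi_2\hat{\Phi}^m(z,y))$ arising from $W(\hat{\Phi}(z,y))$ exactly cancel $\sum_{m\ge 1}A^*(\pi_2\hat{\Phi}^m(z,y))$ coming from $-W(z,y)$. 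What survives involves only the base point and yields the closed form
$$A(z)=A^*(\tau_{z_1}\bar y)+\sum_{m=1}^\infty\Big[A^*(\tau_{z_{m+1}}\cdots\tau_{z_2}\tau_{z_1}\bar y)-A^*(\tau_{z_{m+1}}\cdots\tau_{z_2}\bar y)\Big],$$
which is manifestly a function of $z$ alone, and the same geometric bound shows $A$ is Lipschitz. I expect this telescoping/cancellation (and the accompanying convergence bookkeeping) to be the crux of the argument; by contrast the positivity and continuity checks, and the sum–integral interchange in the first part, are routine.
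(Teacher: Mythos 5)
Your proof is correct and follows essentially the same route as the paper: the first part is the paper's own kernel computation (Proposition \ref{kjr}), intertwining $\mathcal{L}_{A,\sigma}$ and $\mathcal{L}_{A^*,\Phi}$ through $e^{W}$ and invoking the eigenmeasure duality and Remark \ref{infa}, and the existence part is the paper's involution-kernel construction along the mixed skew product $\hat{\Phi}$ (Theorem \ref{ppli}, Remark \ref{cent}, Proposition \ref{tyu}), specialized directly to $f(x,y)=A^{*}(\tau_{x_1}(y))$ with the same telescoping. Incidentally, your closed form for $A$, with the pairing $A^{*}(\tau_{z_{m+1}}\cdots\tau_{z_{1}}\bar y)-A^{*}(\tau_{z_{m+1}}\cdots\tau_{z_{2}}\bar y)$, is the absolutely convergent grouping that the telescoping actually produces (both points share the outer contraction $\tau_{z_{m+1}}\cdots\tau_{z_{2}}$), whereas the paper's displayed formula in Remark \ref{cent} pairs the terms with an off-by-one index in the subtracted summand.
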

\smallskip

$W$ will be called an involution kernel for $A$, and $A$ is called the dual potential of $A^*$. The integral kernel $K(x,y)=e^{W(x,y)}$ relates eigeinprobabilities and eigenfunctions.

We will present the proof of such results in Propositions \ref{kjr} and  \ref{tyu}. In this direction, it will be necessary to introduce the mixed skew product $\hat{\Phi}$ to be defined later (see \eqref{filo}).  Proposition \ref{kjr35} relates dual objects: the eigenprobabilities for the CA $\Phi$ and eigenfunctions for $\sigma$.  Proposition \ref{kjr} is similar, but different for Proposition 8 item (2) in \cite{CLT}, where it is considered $\hat{\sigma}$ instead of $\hat{\Phi}$.

\smallskip

Given $0< \theta < 1$, consider the maximum metric $\hat{d}_{\theta}$ defined by
$$\hat{d}_{\theta}((x,y), (x',y')):=\max( d_{\theta}(x,x'),  d_{\theta}(y,y')),$$
where
$$d_{\theta}(x,x')=\theta^{N}$$
for $N$ being the first coordinate where $x_i \neq x'_i$ (and zero if $x=x'$). The metric  $\hat{d}_{\theta}$ is equivalent to the one defined in \cite{PP}.

In the general setting, we get a matrix $M$ of zeroes and ones and deal with the subshifts of finite type
$$\hat{\Sigma}_{M}:=$$
$$\{(x,y) | x, y \in \{1,2, ..., k\}^{\mathbb{N}},\; M(x_i,x_{i+1})=1, M(y_{i-1},y_{i})=1, M(y_1,x_{1})=1\},$$
$$\Sigma_{M}:=\{x | x \in \{1,2, ..., k\}^{\mathbb{N}},\; M(x_i,x_{i+1})=1\}.$$

As usual, the ``future" and the ``past" projections are well defined respectively as
$\pi_1, \pi_2:\hat{\Sigma}_{M} \to \Sigma_{M}$ by
$$\pi_1(x,y)=x \text{ and } \pi_2(x,y)=y.$$

In this setting we define an immersion function $\varphi: \Sigma_{M} \to \hat{\Sigma}_{M}$ by
$$\varphi(x)=(x,b(x_1))$$
where $M(b_1,x_{1})=1$ and  $b(x_1) \in \Sigma_{M}$ is a fixed sequence.

Notice that the above formalism is necessary when we are dealing with actual subshifts of finite type. From now on, we assume we are considering {\bf just the full shift (for simplification of the argument)}. In this case $\Sigma_{M}=\{1,2, ..., k\}^{\mathbb{N}}$, $\hat{\Sigma}_{M}=\{1,2, ..., k\}^{\mathbb{N}} \times \{1,2, ..., k\}^{\mathbb{N}}$ and the immersion function assume a very simple form
$$\varphi(x)=(x,y')$$
where $y' \in \{1,2, ..., k\}^{\mathbb{N}}$ is a fixed sequence, for all $x$.

In order to clarify the notation we denote $\Omega:=\Sigma_{M}=\{1,2, ..., k\}^{\mathbb{N}}$, where it is necessary.

Assume $f:\Omega^2 \to \mathbb{R}$ is a Lipschitz potential. We will  consider two skew homeomorphisms  (the mixed one in Subsection \ref{uuy} and the non-mixed one in Subsection \ref{iso}), and  our main goal is to relate $f$ (via a cohomological equation) to a new potential $g:\Omega^2 \to \mathbb{R}$ such that $g(x,y)=g(x,z)$, for all $x,y,z \in \Omega$, that is, $g$ depends only of the ``future" coordinates.

\subsection{The mixed skew product $\hat{\Phi}$} \label{uuy}
Consider $\hat{\Phi}: \Omega^2 \to \Omega^2$ defined by
\begin{equation} \label{filo}\hat{\Phi}(x,y)=(\sigma(x),\tau_{x_1}(y)),
\end{equation}
where the inverse branch $\tau_j:  \Omega \to \Omega$ is given by the formula
   $$\tau_j(x)=y \text{ if } \Phi(y)=x \text{ and  } y_1=j.$$

We introduce the $n$-branch
$$\tau_{n,x}(y)=\pi_{2}(\hat{\Phi}^{n}(x,y)).$$

For a fixed $z \in \Omega$ we define the immersion function $\varphi(x)=(x,z)$ and for any $x, y \in \Omega$ we define the \textbf{mix} $W$-kernel as the \emph{formal} correspondence $W:\Omega^2 \to \mathbb{R} $ by
\begin{equation} \label{fifo}W(x,y):=\sum_{n\geq 0} f(\hat{\Phi}^{n}(x,y)) - f(\hat{\Phi}^{n}(\varphi(x))) = \sum_{n\geq 0} f(\hat{\Phi}^{n}(x,y)) - f(\hat{\Phi}^{n}(x,z)).
\end{equation}

The next result generalizes the result from \cite{PP} if we choose the trivial cellular automata (that is $\phi(x,y)=y$).

\begin{theorem} \label{ppli}
  In the above conditions, we have the following properties.
  \begin{enumerate}
    \item The skew map $\hat{\Phi}$ is a homeomorphism.
    \item The series defining $W(x,y)$ is absolutely convergent.
    \item Given  a Lipschitz function $f: \Omega^2 \to \mathbb{R}$, there exists a  Lipschitz function $g: \Omega^2 \to \mathbb{R}$ such that
        \begin{equation} \label{fifi} f(x,y) = g(x,y) + W(x,y) - W(\hat{\Phi}(x,y)),
        \end{equation}
    and $g(x,y)=g(x,z)$ for all $x,y,z \in \Omega$.
    \item $W$ is a Lipschitz function with respect to the metric $d_{\sqrt{\theta}}$ in $\Omega^2$ and ${\rm Lip}(W) \leq 2 {\rm Lip}(f) \left(\frac{1+\theta}{1-\theta}\right)$. In particular, $g$ is also a Lipschitz function with respect to the metric $d_{\sqrt{\theta}}$.
  \end{enumerate}
\end{theorem}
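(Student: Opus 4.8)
The plan is to prove the four items of Theorem \ref{ppli} in their logical order, since each step feeds the next. The whole argument mirrors Proposition 1.2 of \cite{PP}, but with the bilateral shift $\hat\sigma$ replaced by the mixed skew product $\hat\Phi$; the point is that the Lipschitz constant $1/2$ of each inverse branch $\tau_j$ plays the role of the contraction on the ``past'' coordinates, exactly as $\theta$ does in the classical case.

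\smallskip

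\emph{Item 1 (homeomorphism).} First I would exhibit the inverse of $\hat\Phi$ explicitly. Since $\sigma$ is $r$-to-$1$ and the first coordinate $x_1$ selects the branch $\tau_{x_1}$ on the $y$-factor, the map $\hat\Phi^{-1}$ should send $(x',y')$ to $(\,(j,x'),\,\sigma(y')\,)$ where $j=y'_1$ is the symbol that was just inserted; checking $\hat\Phi\circ\hat\Phi^{-1}=\mathrm{id}$ and $\hat\Phi^{-1}\circ\hat\Phi=\mathrm{id}$ reduces to the defining relations $\Phi(\tau_{x_1}(y))=y$ and the prescription $(\tau_{x_1}(y))_1=x_1$. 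Continuity of both $\hat\Phi$ and its inverse follows because $\sigma$, each $\tau_j$, and the prepend map are all continuous in the product topology, so $\hat\Phi$ is a homeomorphism.

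\smallskip

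\emph{Items 2 and 4 (convergence and Lipschitz bound).} These I would handle together, as the summability estimate and the Lipschitz estimate for $W$ come from the same geometric decay. The key observation is that the two factors of $\hat\Phi^n(x,y)=(\sigma^n x,\tau_{n,x}(y))$ contract in \emph{opposite directions}: applying $\sigma^n$ reveals coordinates of $x$ and $z$ deeper in the tail, while applying the composition of $n$ inverse branches $\tau_{n,x}$ pushes the discrepancy between $y$ and $z$ out by $n$ coordinates (each $\tau_j$ has Lipschitz constant $1/2$, and crucially they agree on the first argument $x$, so $\hat\Phi^n(x,y)$ and $\hat\Phi^n(x,z)$ share their first factor). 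Thus in the summand $f(\hat\Phi^n(x,y))-f(\hat\Phi^n(x,z))$ the two points differ only in their second coordinate, at depth $\geq n$, so $\hat d_{\sqrt\theta}(\hat\Phi^n(x,y),\hat\Phi^n(x,z))\leq(\sqrt\theta)^{\,n}$ and each term is bounded by $\mathrm{Lip}(f)\,(\sqrt\theta)^n$; summing a geometric series gives absolute convergence (item 2). For item 4 I would estimate $W(x,y)-W(x',y')$ by splitting the telescoped series at the index $N=\lfloor n_0/2\rfloor$ where $n_0$ is the first disagreement coordinate: for small $n$ one bounds each of the four differences directly using $\hat d_{\sqrt\theta}\leq \hat d_{\sqrt\theta}((x,y),(x',y'))$ times the $\sqrt\theta$-contraction, while for large $n$ both tail sums are themselves bounded by convergent geometric series starting at $(\sqrt\theta)^N$. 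Collecting the two geometric series yields the constant $2\,\mathrm{Lip}(f)\frac{1+\theta}{1-\theta}$; the switch to the metric $d_{\sqrt\theta}$ is exactly the device (as in \cite{PP}) that balances the two opposite contraction rates so that a single geometric series controls everything.

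\smallskip

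\emph{Item 3 (cohomological equation and future-dependence of $g$).} I would \emph{define} $g$ by rearranging \eqref{fifi}, namely $g(x,y):=f(x,y)-W(x,y)+W(\hat\Phi(x,y))$, so that \eqref{fifi} holds tautologically; the content is then to verify that $g$ depends only on the future coordinate $x$. For this I would compute $W(x,y)-W(\hat\Phi(x,y))$ directly from \eqref{fifo}: the defining series telescopes, $W(x,y)-W(\hat\Phi(x,y))=f(x,y)-\big(\text{terms along the reference orbit }\hat\Phi^n(x,z)\big)$, and the surviving reference-orbit contribution depends on $(x,y)$ only through $x$ (because the reference point $\varphi(x)=(x,z)$ has its second coordinate fixed at $z$, and the shift acts on $x$ alone). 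Hence $g(x,y)=f(x,y)-W(x,y)+W(\hat\Phi(x,y))$ collapses to a function of $x$ only, giving $g(x,y)=g(x,z)$ for all $y,z$. That $g$ is Lipschitz with respect to $d_{\sqrt\theta}$ then follows from item 4, since $f$ is Lipschitz and $W$ was just shown to be $d_{\sqrt\theta}$-Lipschitz (and composition with the homeomorphism $\hat\Phi$, which is $\sqrt\theta$-Lipschitz in this metric, preserves the Lipschitz class).

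\smallskip

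The main obstacle I expect is item 4: one must be careful that the two coordinates contract at genuinely different rates ($\sigma^n$ advances $x$ by $n$, whereas $\tau_{n,x}$ advances $y$ by $n$ in the opposite factor), and the whole reason for introducing $d_{\sqrt\theta}$ rather than $d_\theta$ is to reconcile these so that a \emph{single} geometric series of ratio $\sqrt\theta$ dominates both the ``future'' and ``past'' discrepancies simultaneously. Getting the index split and the four-term bookkeeping right, and confirming that the branches $\tau_{n,x}$ and $\tau_{n,x'}$ stay close when $x,x'$ agree on a long prefix (which uses the uniform Lipschitz constant $1/2$ of each $\tau_j$ together with the bounded-distortion-type control already recorded in \eqref{esque}), is where the real estimation work lies.
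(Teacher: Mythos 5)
Your outline follows the paper's proof step for step: the same telescoping computation produces $g$, the same observation that $\hat\Phi^n(x,y)$ and $\hat\Phi^n(x,z)$ share their first factor gives geometric decay of the series, and the same splitting of the sums at half the first-disagreement index produces exactly the constant $2\,\mathrm{Lip}(f)\left(\frac{1+\theta}{1-\theta}\right)$. Two of your auxiliary claims, however, are false as stated and need repair.

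First, your formula for the inverse map is wrong: you propose $\hat\Phi^{-1}(x',y')=\bigl((y_1',x'),\,\sigma(y')\bigr)$, but $\tau_j$ is an inverse branch of $\Phi$, not of $\sigma$, so $\sigma(\tau_{x_1}(y))\neq y$ except for the trivial automaton $\phi(a,b)=b$. The defining relation you yourself cite, $\Phi(\tau_{x_1}(y))=y$, forces the second component of the inverse to be $\Phi(y')$, i.e. $\hat\Phi^{-1}(x',y')=\bigl(y_1'x',\,\Phi(y')\bigr)$, which is the formula the paper records at the end of its subsection on the mixed skew product; with that correction your verification and continuity argument go through (the paper instead proves injectivity directly, continuity of the inverse being automatic for a continuous bijection of a compact metric space). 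Second, in item 3 you justify the Lipschitz property of $g$ by asserting that $\hat\Phi$ is ``$\sqrt\theta$-Lipschitz'' in the metric $d_{\sqrt{\theta}}$; this is false, since $\hat\Phi$ expands the future factor: $d_{\sqrt{\theta}}(\sigma(x),\sigma(x'))=\theta^{-1/2}\,d_{\sqrt{\theta}}(x,x')$ whenever $x_1=x_1'$, so the best Lipschitz constant is $\theta^{-1/2}>1$. The conclusion survives, because all you need is that $W\circ\hat\Phi$ be Lipschitz with \emph{some} constant (so that $g=f-W+W\circ\hat\Phi$ is Lipschitz), or, as in the paper, one can read the regularity of $g$ directly off its explicit series formula $g(x,y)=f(x,z)+\sum_{n\geq 1}\bigl[f(\hat\Phi^{n}(x,z))-f(\hat\Phi^{n}(\sigma(x),z))\bigr]$, which involves only the future variable. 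Neither slip affects the architecture of the proof, but both statements would fail as written.
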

\begin{proof}
(1) $\hat{\Phi}$ is obviously continuous because each coordinate is. To see that it is a bijection suppose $$\hat{\Phi}(x,y)=(\sigma(x),\tau_{x_1}(y))= (\sigma(x'),\tau_{x'_1}(y'))=  \hat{\Phi}(x',y').$$
Since $\sigma(x')= \sigma(x)$, the strings $x,x'$ coincide, except eventually by the first coordinate.  On the other hand $z'=\tau_{x'_1}(y')=\tau_{x_1}(y)=z$ meaning that $\Phi(z)= y$ and $z_1=x_1$ and $\Phi(z')= y'$ and $z'_1=x'_1$ thus $x_1=x'_1$. From  the equations $\Phi(z)= y$, $\Phi(z')= y'$ and $z=z'$, we get $y=y'$ and $x=x'$.

(2) Consider the absolute value series
$$\sum_{n\geq 0} |f(\hat{\Phi}^{n}(x,y)) - f(\hat{\Phi}^{n}(x,z))| \leq  \sum_{n\geq 0} {\rm Lip}(f) \theta^n d(y,z) <\infty.$$

(3) One just needs to transform the equation
$$W(x,y) - W(\hat{\Phi}(x,y))=$$ $$= \sum_{n\geq 0} f(\hat{\Phi}^{n}(x,y)) - f(\hat{\Phi}^{n}(x,z)) - \sum_{n\geq 0} f(\hat{\Phi}^{n+1}(x,y)) - f(\hat{\Phi}^{n}(\sigma(x),z))=$$
$$=f(x,y) - f(x,z) +   {f(\hat{\Phi}(x,y))} - f(\hat{\Phi}(x,z))+   {f(\hat{\Phi}^{2}(x,y))} - $$ $$f(\hat{\Phi}^{2}(x,z))+   {f(\hat{\Phi}^{3}(x,y))} - f(\hat{\Phi}^{3}(x,z))+ $$ $$ + \ldots  - \left[  {f(\hat{\Phi}(x,y))} - f(\hat{\Phi}(\sigma(x),z))+   {f(\hat{\Phi}^{2}(x,y))} - \right. $$ $$\left. f(\hat{\Phi}^{2}(\sigma(x),z))+   {f(\hat{\Phi}^{3}(x,y))} - f(\hat{\Phi}^{3}(\sigma(x),z))+ \ldots\right]=$$
$$=f(x,y) - \left[ f(x,z) + \sum_{n\geq 1} f(\hat{\Phi}^{n}(x,z)) - f(\hat{\Phi}^{n}(\sigma(x),z))\right].$$
Thus, taking
$$g(x,y)=f(x,z) + \sum_{n\geq 1} f(\hat{\Phi}^{n}(x,z)) - f(\hat{\Phi}^{n}(\sigma(x),z))$$ we get the  cohomological equation \eqref{fifi}, where $W$ is given by \eqref{fifo}. We notice that $g(x,y)$ depends only on the future coordinates (given by $x$).

(4) Take any $x,x',y,y' \in \Omega$. 

If, for a fixed $N>0$ we suppose $d((x,y), (x',y'))=\theta^{2N}$, then
$$W(x,y)- W(x',y')=  $$
$$\sum_{n\geq 0} f(\hat{\Phi}^{n}(x,y)) - f(\hat{\Phi}^{n}(x,z)) - \sum_{n\geq 0} f(\hat{\Phi}^{n}(x',y')) - f(\hat{\Phi}^{n}(x',z)),$$
can be evaluated as follows.

For $n \leq N$ we have
$$|f(\hat{\Phi}^{n}(x,y)) - f(\hat{\Phi}^{n}(x',y'))| \leq |f(\sigma^{n}(x), \tau_{n,x}(y)) - f(\sigma^{n}(x'), \tau_{n,x'}(y'))| \leq $$
$${\rm Lip}(f) \max( d_{\theta}(\sigma^{n}(x),\sigma^{n}(x')), d_{\theta}(\tau_{n,x}(y), \tau_{n,x'}(y'))  \leq $$ $$\leq {\rm Lip}(f) \max( \theta^{-n}d_{\theta}(x,x'), \theta^{n}d_{\theta}(y, y')) \leq {\rm Lip}(f) \theta^{2N-n}.$$
A similar reasoning shows that
$$|f(\hat{\Phi}^{n}(x,z)) - f(\hat{\Phi}^{n}(x',z))| \leq {\rm Lip}(f) \theta^{2N-n}.$$

Analogously, for every $n \geq 1$ we get
$$|f(\hat{\Phi}^{n}(x,y)) - f(\hat{\Phi}^{n}(x,z))|=|f(\sigma^{n}(x), \tau_{n,x}(y)) - f(\sigma^{n}(x), \tau_{n,x}(z))|\leq$$
$$\leq {\rm Lip}(f) \max( d_{\theta}(\sigma^{n}(x),\sigma^{n}(x)), d_{\theta}(\tau_{n,x}(y), \tau_{n,x}(z))  \leq $$ $$\leq {\rm Lip}(f) \max( 0 , \theta^{n} d_{\theta}(y, y')) \leq {\rm Lip}(f) \theta^{n}.$$
A similar reasoning shows that
$$|f(\hat{\Phi}^{n}(x',y)) - f(\hat{\Phi}^{n}(x',z))|\leq {\rm Lip}(f) \theta^{n}.$$

From these four inequalities, we can write
$$|W(x,y)- W(x',y')|\leq  $$ $$\leq |\sum_{0 \leq n \leq N} f(\hat{\Phi}^{n}(x,y)) - f(\hat{\Phi}^{n}(x,z)) - \sum_{0 \leq n \leq N} f(\hat{\Phi}^{n}(x',y')) - f(\hat{\Phi}^{n}(x',z))| + $$
$$ | \sum_{ n > N} f(\hat{\Phi}^{n}(x,y)) - f(\hat{\Phi}^{n}(x,z)) - \sum_{n>N} f(\hat{\Phi}^{n}(x',y')) - f(\hat{\Phi}^{n}(x',z))| \leq$$

$$\leq\sum_{0 \leq n \leq N} |f(\hat{\Phi}^{n}(x,y)) - f(\hat{\Phi}^{n}(x',y'))| + \sum_{0 \leq n \leq N} |f(\hat{\Phi}^{n}(x,z)) - f(\hat{\Phi}^{n}(x',z))| + $$
$$ \sum_{ n > N} |f(\hat{\Phi}^{n}(x,y)) - f(\hat{\Phi}^{n}(x,z))| + \sum_{n>N} |f(\hat{\Phi}^{n}(x',y')) - f(\hat{\Phi}^{n}(x',z))| \leq$$

$$\leq\sum_{0 \leq n \leq N} {\rm Lip}(f) \theta^{2N-n} + \sum_{0 \leq n \leq N} {\rm Lip}(f) \theta^{2N-n} + $$
$$ \sum_{ n > N} {\rm Lip}(f) \theta^{n} + \sum_{n>N} {\rm Lip}(f) \theta^{n} \leq$$

$$2 {\rm Lip}(f) \left( \theta^{2N} \sum_{0 \leq n \leq N} \theta^{-n}   + \theta^{N+1} \frac{1}{1-\theta}\right)=$$ $$=2 {\rm Lip}(f) \left( \theta^{N} \sum_{0 \leq n \leq N} \theta^{-n}   +  \frac{\theta}{1-\theta}\right) \theta^{N}= C d_{\sqrt{\theta}}((x,y), (x',y')),$$
for
$$C:= 2 {\rm Lip}(f) \left(\frac{1+\theta}{1-\theta}\right)< \infty.$$
Here we used the fact that
$$\theta^{N} \sum_{0 \leq n \leq N} \theta^{-n}= \sum_{0 \leq n \leq N} \theta^{n} \leq \frac{1}{1-\theta}.$$
\end{proof}

\begin{proposition} \label{kjr} Assume that $A$ depending just of the future coordinates $x$ and $A^*$ which depends on past coordinates $y$ are such that, there exist $W:\Omega \times \Omega$ satisfying
\begin{equation} \label{pit}(A+W)(ix,y)= (A^*+W)(x, \tau_{i}(y)).
\end{equation}
Then, if $\rho_{A^*}$ is the eigenprobability for the $\Phi$-Ruelle operator $\mathcal{L}_{A^*,\Phi}$ of the potential $A^*$, then
$$\phi(x) = \int e^{W(x,y)} d \rho_{A^*}(y)$$
is the eigenfunction for the $\sigma$-Ruelle operator $ \mathcal{L}_{A,\sigma}$ of the potential $A$.

Moreover, given $A^*$, there exist $A$ and $W$ as in \eqref{pit}.
\end{proposition}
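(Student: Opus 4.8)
The plan is to verify the eigenfunction property by a direct computation with the Ruelle operator, using the cohomological relation \eqref{pit} as the algebraic engine that converts a sum over $\sigma$-preimages into the action of $\mathcal{L}_{A^*,\Phi}^*$ on the measure $\rho_{A^*}$. First I would write out $\mathcal{L}_{A,\sigma}(\phi)(x)$ explicitly: since the $\sigma$-preimages of $x$ are exactly the points $ix$ for $i=1,\ldots,r$, we have
\begin{equation*}
\mathcal{L}_{A,\sigma}(\phi)(x)=\sum_{i=1}^{r} e^{A(ix)}\,\phi(ix)=\sum_{i=1}^{r} e^{A(ix)}\int e^{W(ix,y)}\,d\rho_{A^*}(y).
\end{equation*}
The key step is to recognize $e^{A(ix)+W(ix,y)}=e^{(A+W)(ix,y)}$ and apply the defining identity \eqref{pit}, which replaces it by $e^{(A^*+W)(x,\tau_i(y))}=e^{A^*(\tau_i(y))}e^{W(x,\tau_i(y))}$. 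After this substitution the integrand involves $W(x,\cdot)$ composed with the inverse branches $\tau_i$, and the sum over $i$ together with the weight $e^{A^*(\tau_i(y))}$ is precisely the structure of the dual operator $\mathcal{L}_{A^*,\Phi}^*$ acting under the integral.

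Concretely, I would swap the finite sum and the integral and observe that
\begin{equation*}
\sum_{i=1}^{r} e^{A^*(\tau_i(y))}\,e^{W(x,\tau_i(y))}=\mathcal{L}_{A^*,\Phi}\bigl(e^{W(x,\cdot)}\bigr)(y),
\end{equation*}
since the $\Phi$-preimages of $y$ are exactly the points $\tau_i(y)$, $i=1,\ldots,r$. Integrating against $\rho_{A^*}$ and using the dual eigen-equation $\mathcal{L}_{A^*,\Phi}^*(\rho_{A^*})=\lambda_{A^*}\rho_{A^*}$ from \eqref{maru} via the defining relation \eqref{feij2}, I get
\begin{equation*}
\mathcal{L}_{A,\sigma}(\phi)(x)=\int \mathcal{L}_{A^*,\Phi}\bigl(e^{W(x,\cdot)}\bigr)\,d\rho_{A^*}=\lambda_{A^*}\int e^{W(x,y)}\,d\rho_{A^*}(y)=\lambda_{A^*}\,\phi(x).
\end{equation*}
Thus $\phi$ is an eigenfunction of $\mathcal{L}_{A,\sigma}$ with eigenvalue $\lambda_{A^*}$. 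Since $\phi>0$ (as $e^{W}>0$ and $\rho_{A^*}$ is a probability) and continuous, Remark \ref{infa} identifies it with the main eigenfunction $\psi_A$, which also forces the eigenvalues to coincide.

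For the final existence assertion, given a Lipschitz $A^*$ depending on past coordinates I would build $W$ from the skew dynamics of Theorem \ref{ppli}. The idea is to apply part (3) of that theorem to the potential $f(x,y):=A^*(y)$, producing a cohomological decomposition $f=g+W-W\circ\hat{\Phi}$ with $g$ depending only on $x$; I would then set $A(x):=g(x,y)$ (well-defined by the future-only property) and check that the resulting cohomological equation, written out along the skew map $\hat{\Phi}(x,y)=(\sigma(x),\tau_{x_1}(y))$, is exactly the pointwise relation \eqref{pit} after the shift by one symbol. The Lipschitz regularity of $W$ and $A$ then follows from part (4) of Theorem \ref{ppli}. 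The main obstacle I anticipate is bookkeeping the index conventions: matching the ``add a symbol $i$ in front of $x$'' operation on the $\sigma$-side with the inverse-branch action $\tau_i$ on the $\Phi$-side, and confirming that the involution kernel $W$ coming from the skew product $\hat{\Phi}$ satisfies the one-step relation \eqref{pit} rather than merely the $\hat{\Phi}$-cohomological equation \eqref{fifi}; these two should be equivalent precisely because $\hat{\Phi}$ couples $\sigma$ on the future with $\tau_{x_1}$ on the past, but the verification requires care.
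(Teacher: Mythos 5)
Your verification of the eigenfunction property is correct and is essentially the paper's own computation: swap the sum and the integral, use \eqref{pit} to convert $e^{A(ix)+W(ix,y)}$ into $e^{A^*(\tau_i(y))+W(x,\tau_i(y))}$, recognize the resulting sum over $i$ as $\mathcal{L}_{A^*,\Phi}\bigl(e^{W(x,\cdot)}\bigr)(y)$, and integrate against $\rho_{A^*}$ using \eqref{feij2} and \eqref{maru}; Remark \ref{infa} then identifies $\phi$ with the main eigenfunction and forces the eigenvalues to agree. This part needs no repair.

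The gap is in the existence part, and it is exactly the one-step bookkeeping issue you flagged as a risk. If you apply Theorem \ref{ppli}(3) to $f(x,y):=A^*(y)$, the cohomological equation gives $A^*(y)=A(x)+W(x,y)-W(\sigma(x),\tau_{x_1}(y))$, and substituting $x\mapsto ix$ (so that $\sigma(ix)=x$ and $(ix)_1=i$) yields
$$(A+W)(ix,y)=A^*(y)+W(x,\tau_i(y)),$$
which has $A^*(y)$ where \eqref{pit} requires $A^*(\tau_i(y))$. These relations are genuinely different, not equivalent up to relabeling: the discrepancy cannot be absorbed into a modified kernel (adding to $W$ a function $h(y)$ of the past would force $(A^*+h)(y)=(A^*+h)(\tau_i(y))$ for all $i,y$, making $A^*+h$ constant), and the relation you actually obtain does not feed into your eigenfunction computation, since the weight appearing in the sum over $i$ would be $e^{A^*(y)}$ rather than $e^{A^*(\tau_i(y))}$, so the sum is no longer $\mathcal{L}_{A^*,\Phi}\bigl(e^{W(x,\cdot)}\bigr)(y)$. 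The fix is the choice the paper makes in Remark \ref{cent}: take $f(x,y):=A^*(\pi_2(\hat{\Phi}(x,y)))=A^*(\tau_{x_1}(y))$, i.e., $A^*$ precomposed with one step of the skew dynamics. Then the cohomological equation reads $A^*(\tau_{x_1}(y))+W(\hat{\Phi}(x,y))=A(x)+W(x,y)$, and the same substitution $x\mapsto ix$ produces exactly \eqref{pit}; this is the content of Proposition \ref{tyu}, and the Lipschitz regularity then follows from Theorem \ref{ppli}(4) as you intended.
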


\begin{proof} Note that
$$ \mathcal{L}_A^\sigma (\phi ) (x) =  \mathcal{L}_A^\sigma (  \int e^{W(.,y)} d \rho_{A^*}(y))(x) =$$
$$\int  \mathcal{L}_A^\sigma (  e^{W(.,y)})   (x) d \rho_{A^*}(y)=\int  \sum_i e^{A(i x)} e^{W(ix,y)}  d \rho_{A^*}(y) = $$
$$ \int  \sum_i e^{A^*(\tau_i (y))} e^{W(x,iy)}  d \rho_{A^*}(y) =\int  \mathcal{L}_{A^*}^\Phi  (e^{W(x,.)})  d \rho_{A^*}(y)= $$
$$\lambda_{A^*}^\phi  \int e^{W(x,y)} d \rho_{A^*}(y).$$

From Remark \ref{infa} we get that $\lambda_{A^*}^\phi= \lambda_A^\sigma.$

For the proof of the existence of $A$ and $W$ see Proposition \ref{tyu} and Remark \ref{cent}.
\end{proof}

\begin{remark} \label{cent}
A particular situation of the previous result occurs when $f$ is defined in the following way: consider a   Lipschitz potential  $A^*: \Omega \to \mathbb{R}$ depending only on the past, and take
$$f(x,y)= A^*(\pi_2(\hat{\Phi}(x,y)))=A^*(\pi_2(\sigma(x), \tau_{1,x}(y))=A^*(\tau_{1,x}(y)) =A^* (\tau_{x_1} (y)).$$

In this case, the cohomological equation produces
  $$A^*(\tau_{1,x}(y)) = g(x,y) + W(x,y) - W(\hat{\Phi}(x,y)),$$
where $g(x,y)$ does not depends on $y$, meaning that there exists a potential $A: \Omega \to \mathbb{R}$, given by
$$A(x)=f(x,z) + \sum_{n\geq 1} f(\hat{\Phi}^{n}(x,z)) - f(\hat{\Phi}^{n}(\sigma(x),z))=$$
$$A^*(\tau_{1,x}(z)) + \sum_{n\geq 1} A^*(\tau_{n+1,x}(z)) - A^*(\tau_{n+1,\sigma(x)}(z)),$$
also Lipschitz (w.r.t. the appropriated metric) such that
$$A(x) = A^*(\pi_2(\hat{\Phi}(x,y))) + W(x,y) - W(\hat{\Phi}(x,y)),$$
for some $W$.\\
In this situation we say that $A$ and $A^*$ are dual potentials w.r.t. the $W$-kernel $W$ chosen by fixing $z$.\\
From this, we get a simpler cohomological equation
$$A^*(\tau_{1,x}(y)) = A(x) + W(x,y) - W(\hat{\Phi}(x,y)),$$
or, equivalently
\begin{equation} \label{esw}A^*(\tau_{1,\sigma(x)}(y))+ W(\hat{\Phi}(x,y)) =  A(x) + W(x,y).
\end{equation}
\end{remark}

From the above Remark, we get:

\begin{proposition} \label{tyu} Given a Lipschitz function $y \to A^* (y)$, there exists a Lipschitz function $x \to A^* (x)$, and a bi-Lipschitz function $(x,y) \to W(x,y)$, such that for all $i=1,2...,r$ and $x,y$
\begin{equation} \label{esw2}(A+W)(ix,y)= (A^*+W)(x, \tau_{i}(y)).
\end{equation}

In a similar fashion, given $A$ one can find $A^*$ and $W$ such that  \eqref{esw2} is true.
\end{proposition}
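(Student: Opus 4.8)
The plan is to read Proposition \ref{tyu} as essentially a restatement of Remark \ref{cent}, so that the real content is supplied by the cohomological decomposition in Theorem \ref{ppli}, and the only genuine work is translating that dynamical identity into the pointwise functional equation \eqref{esw2}.

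Starting from the given Lipschitz potential $y \mapsto A^*(y)$ (depending only on the past), I would form the auxiliary potential on $\Omega^2$
$$f(x,y) := A^*(\pi_2(\hat{\Phi}(x,y))) = A^*(\tau_{x_1}(y)),$$
which is Lipschitz with respect to the relevant metric because it is $A^*$ composed with the second coordinate of the homeomorphism $\hat{\Phi}$ of \eqref{filo} (Theorem \ref{ppli}(1)). Applying Theorem \ref{ppli}(3) to this $f$ produces a Lipschitz $g:\Omega^2\to\mathbb{R}$ depending only on the future coordinate, together with the kernel $W$ of \eqref{fifo}, satisfying
$$f(x,y) = g(x,y) + W(x,y) - W(\hat{\Phi}(x,y)).$$
Since $g(x,y)$ is independent of $y$, I set $A(x) := g(x,y)$; this is the candidate dual potential. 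It is Lipschitz by Theorem \ref{ppli}(3), and $W$ is Lipschitz with respect to $d_{\sqrt{\theta}}$ with the bound ${\rm Lip}(W)\le 2\,{\rm Lip}(f)\left(\frac{1+\theta}{1-\theta}\right)$ furnished by Theorem \ref{ppli}(4), which is the regularity asserted for $W$ in the statement.

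The one substantive step is the equivalence with \eqref{esw2}. Rewriting the decomposition as
$$A^*(\tau_{x_1}(y)) + W(\sigma(x),\tau_{x_1}(y)) = A(x) + W(x,y),$$
I would substitute $x \mapsto ix$ for a fixed symbol $i\in\{1,\dots,r\}$; then $\sigma(ix)=x$ and $(ix)_1=i$, so $\tau_{(ix)_1}(y)=\tau_i(y)$ and the identity becomes exactly
$$A(ix) + W(ix,y) = A^*(\tau_i(y)) + W(x,\tau_i(y)),$$
which is \eqref{esw2}. This substitution is the conceptual crux, and I expect it to be the main (though short) obstacle, since it is where the shift-structure of the first coordinate of $\hat{\Phi}$ is used to convert the orbit-cohomology relation into a relation over the single inverse branches $\tau_i$; everything else is bookkeeping provided by Theorem \ref{ppli}.

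For the reverse assertion (given $A$, produce $A^*$ and $W$), I would run the identical argument for the inverse homeomorphism $\hat{\Phi}^{-1}$, available by Theorem \ref{ppli}(1), whose explicit form $\hat{\Phi}^{-1}(x,y) = (y_1\,x,\,\Phi(y))$ interchanges the roles of the future and past coordinates. Taking $\tilde f(x,y):=A(\pi_1(\hat{\Phi}^{-1}(x,y)))$ and decomposing it by the involution-kernel construction adapted to $\hat{\Phi}^{-1}$ yields a potential depending only on the past, namely the desired $A^*$, together with its kernel. The only point requiring care is that the absolute-convergence and Lipschitz estimates of Theorem \ref{ppli}(2),(4) transfer verbatim to $\hat{\Phi}^{-1}$, since that map contracts the first coordinate and expands the second at the same respective rates, so the same geometric-series bounds apply.
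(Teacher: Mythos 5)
Your proposal is correct and follows essentially the same route as the paper: you rebuild Remark \ref{cent} by applying Theorem \ref{ppli}(3) to $f(x,y)=A^*(\tau_{x_1}(y))$, define $A$ as the future-only part $g$, and then the substitution $x\mapsto ix$ (using $\sigma(ix)=x$ and $\tau_{(ix)_1}=\tau_i$) is exactly the paper's one-line derivation of \eqref{esw2} from \eqref{esw}. Your treatment of the reverse direction via $\hat{\Phi}^{-1}(x,y)=(y_1x,\Phi(y))$ likewise matches the paper's concluding remark in that subsection.
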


\begin{proof}
Replacing $x$ by $ix=(i,x_1,x_2,...)$ in \eqref{esw} we get
\begin{equation} \label{esw1}(A+W)(ix,y)= (A^*+W)(\sigma(ix), \tau_{1,ix}(y))= (A^*+W)(x, \tau_{i}(y)).\end{equation}
\end{proof}
\smallskip

In the above we replace $\hat{\Phi}$ by the shift $\hat{\sigma}$, and we exchange the variables $x$ and $y$ above, then we get the similar expression described by Definition 6 obtained in \cite{CLT}.\\
Finally, we notice that the dual relation can be reversed, that is,
$$A(x) = A^*(\pi_2(\hat{\Phi}(x,y))) + W(x,y) - W(\hat{\Phi}(x,y)),$$
is equivalent to
$$A(\pi_1(\hat{\Phi}^{-1}(x,y))) =  A^*(y)+ W(\hat{\Phi}^{-1}(x,y)) - W(x,y),$$
or
$$ A^*(y)= A(\pi_1(\hat{\Phi}^{-1}(x,y))) + W(x,y)- W(\hat{\Phi}^{-1}(x,y)),$$
where $\hat{\Phi}^{-1}(x,y)= (y_1x, \Phi(y))$.

\subsection{The non-mixed skew product $\hat{\Phi}_n$} \label{iso}
In this subsection we  consider  the non-mixed skew product $\hat{\Phi}_n: \Omega^2 \to \Omega^2$ defined by
$$\hat{\Phi}_n(x,y)=(\Phi(x),\tau_{x_1}(y)),$$
where the inverse branch $\tau_j:  \Omega \to \Omega$ is given by the formula
   $$\tau_j(x)=y \text{ if } \Phi(y)=x \text{ and  } y_1=j.$$

We introduce the $k$-branch
$$\tau_{k,x}(y)=\pi_{2}(\hat{\Phi}^{k}_n (x,y)).$$

For a fixed $z \in \Omega$ we define the immersion function $\varphi(x)=(x,z)$ and for any $x, y \in \Omega$ we define the \textbf{non-mixed} involution kernel $W$ as the \emph{formal} correspondence $W:\Omega^2 \to \mathbb{R} $ given by
$$W_n(x,y):=\sum_{k\geq 0} f(\hat{\Phi}^{k}_n(x,y)) - f(\hat{\Phi}^{k}_n(\varphi(x))) = \sum_{k\geq 0} f(\hat{\Phi}^{k}_n (x,y)) - f(\hat{\Phi}^{k}_n(x,z)).$$

In a similar way as in the last subsection, we  can show that such $W_n$ is well-defined. Moreover,  given a Lipschitz function  $f: \Omega^2 \to \mathbb{R}$,
there exists a Lipschitz function  $g: \Omega^2 \to \mathbb{R}$, such that,
        \begin{equation} \label{fifi44} f(x,y) = g(x,y) + W_n(x,y) - W_n(\hat{\Phi}_n(x,y)),
        \end{equation}
    and $g(x,y)=g(x,z)$ for all $x,y,z \in \Omega$.

A similar version of Theorem \ref{ppli} for $\hat{\Phi}_n$ is true (the reasoning is similar to the one in the last subsection).

\smallskip

Denote by $\mathcal{M}(\hat{\Phi}_n)$ the set of Borel probabilities on $\Omega \times \Omega$ which are invariant by $\hat{\Phi}_n.$
Given a Lipschitz potential $\hat{A} : \Omega \times \Omega \to \mathbb{R}$,  consider the topological pressure problem
\begin{equation} \label{trypol} P(\hat{A}):=\sup_{\hat{\mu} \in\mathcal{M} (\hat{\Phi}_n)} \{
h(\hat{\mu})+   \,\int \hat{A}  d \hat{\mu}\},
\end{equation}
where $h(\hat{\mu})$ is the Shannon-Kolmogorov entropy of $\hat{\mu}.$ A probability $\hat{\mu}_{\hat{A}}$  attaining the supremum value $P(\hat{A})$  will be called an $\hat{\Phi}_n$-equilibrium probability for $\hat{A}$.

Of course, if $\hat{B}: \Omega \times \Omega \to \mathbb{R}$ is such that there exist a continuous  $\hat{C}: \Omega \times \Omega \to \mathbb{R}$ satisfying
$$ \hat{B}= \hat{A} + \hat{C} - \hat{C} \circ \hat{\Phi}_n,$$
then an equilibrium probability for $\hat{A}$ is an  equilibrium probability for $\hat{B}$, and vice-versa.

The reasoning showing the validity of expression \eqref{fifi44} implies that we can replace in the above Pressure problem \eqref{trypol}  the potential $\hat{A}: \Omega \times \Omega \to \mathbb{R}$ by a Lipschitz potential $\hat{B}:\Omega \to \mathbb{R}$, that depends just on future coordinates $x$.
The equilibrium probabilities for $\hat{A}$ and $\hat{B}:=B$ will be the same. As $B$ depends just on future coordinates one can define  the Ruelle operator $\mathcal{L}_{B,\Phi}$ and take advantage of the Ruelle Theorem. Suppose $\mu_{B,\Phi}$ is the $\Phi$-equilibrium probability for $B$. This will define the probability $\mu_{B,\Phi} (\overline{a_1,a_2,...,a_m})$, $a_j\in \{1,2,...,r\}$, $j=1,2,...,m$, for any cylinder $\overline{a_1,a_2,...,a_m}$. This means that we are in fact defining probabilities for sets of the form
$$\Omega\times  \overline{a_1,a_2,...,a_m}\subset \Omega \times \Omega.$$

In \eqref{trypol} we  are interested only in probabilities on $\mathcal{M} (\hat{\Phi}_n)$. There is only way to extend $\mu_{B,\Phi}$ for a $\hat{\Phi}_n$-invariant probability $\hat{\mu}$ on $\Omega \times \Omega$. We set
$$\mu(\hat{\Phi}_n^{-k} (\Omega\times  \overline{a_1,a_2,...,a_m}  ))= \mu_{B,\Phi}( \overline{a_1,a_2,...,a_m}).$$

The probability $\mu$ is called the natural extension of   $\mu_{B,\Phi}$. Note that for our $\Phi$ (obtained from the local rule $\phi$ defined on section \ref{sec:2}) we get that $\hat{\Phi}_n^{-k} (\Omega\times  \overline{a_1,a_2,...,a_m}  )$ will exhaust the class of all possible cylinders in $\Omega\times \Omega$, changing $k$, the cylinders $\overline{a_1,a_2,...,a_m} $, etc. These cylinders will generate the Borel sigma-algebra of $\Omega \times \Omega$.
In this way, we can  identify the $\hat{\Phi}_n$-equilibrium probability for $\hat{A}$ via such $\mu$. The several ergodic properties, for instance, exponential decay of correlations, for the $\Phi$-equilibrium probability for $B$ are transferred for the $\hat{\Phi}_n$-equilibrium probability for $\hat{A}.$ In other words, we can take advantage of  the properties described in Section \ref{ther} for $\Phi$, but now  for the $\hat{\Phi}_n$-equilibrium probability for $\hat{A}.$

\section{Livsic's theorem for cellular automata} \label{Livvi}

In this section, we will present a version of Livsic's Theorem for the permutative CA $\Phi$ satisfying \eqref{perm} of Section \ref{sec:2}.    This result is based on estimates of the Birkhoff averages of the potential $A$ over periodic orbits. Among other things, we are interested in criteria to be able to decide whether a certain potential $A$ of Holder class has, or not,  as equilibrium probability
the measure of maximum entropy. We also present a criteria for finding periodic orbits for $\Phi$.

We recall that, for given $0< \theta < 1$, the usual metric  is defined by
$$d_{\theta}(x,x')=\theta^{N},$$
for $N$ being the first coordinate where $x_i=x'_i$ (and zero otherwise), makes $\Phi$ be a uniform expanding homeomorphism (by $\theta^{-1}$) and the inverse branches, $\tau_j:  \Omega \to \Omega$ are given by the formula
   $$\tau_j(x)=y \text{ if } \Phi(y)=x \text{ and  } y_1=j,$$
and are uniform contractions by $\theta$.

\begin{lemma} \label{thm:Phi expanding lemma}
  The endomorphism $\Phi: \Omega \to \Omega$ is expanding, that is, there exist $\epsilon>0$ and $L:=\theta^{-1}> 1$ such that $d(\Phi(x), \Phi(y)) \geq L d(x, y)$ for all $x, y \in \Omega$ such that $d(x, y)<\epsilon$.
\end{lemma}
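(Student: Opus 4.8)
The plan is to work directly with the coordinatewise description of $\Phi$ from \eqref{tryp1} and to track how the first index of disagreement between two points is displaced under $\Phi$, mirroring the Lipschitz computation already carried out for the branches $\tau_j$ earlier in the text. Fix $L := \theta^{-1} > 1$ and take $\epsilon := \theta$. Given distinct $x, y$ with $d_\theta(x,y) < \epsilon$, let $N \ge 1$ be the first coordinate at which they differ, so that $d_\theta(x,y) = \theta^{N}$; the constraint $d_\theta(x,y) < \theta$ forces $N \ge 2$, that is, $x_1 = y_1$. (The case $x=y$ is trivial.)

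The key step is the observation that $\Phi(x)_k = \phi(x_k, x_{k+1})$ depends only on the two coordinates $x_k, x_{k+1}$. For $k \le N-2$ both arguments agree with those of $y$, so $\Phi(x)_k = \Phi(y)_k$. At $k = N-1$ we have $x_{N-1} = y_{N-1} =: a$ while $x_N \ne y_N$; invoking the bijectivity of $b \mapsto \phi(a,b)$ from \eqref{pkit}, we get $\phi(a, x_N) \ne \phi(a, y_N)$, i.e. $\Phi(x)_{N-1} \ne \Phi(y)_{N-1}$. Hence the first disagreement of $\Phi(x)$ and $\Phi(y)$ sits exactly at index $N-1$, which yields $d_\theta(\Phi(x),\Phi(y)) = \theta^{N-1} = \theta^{-1} d_\theta(x,y) = L\, d_\theta(x,y)$, giving the claimed bound (in fact an equality) on the $\epsilon$-ball.

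The one point that genuinely requires the localization is that $\Phi$ is not globally expanding: permutativity \eqref{perm} only guarantees injectivity of $\phi$ in its second variable, so when $x_1 \ne y_1$ the values $\phi(x_1,x_2)$ and $\phi(y_1,y_2)$ may coincide (as happens for the matrices with repeated rows in Example \ref{eds}), and the first disagreement can then move forward or even be destroyed rather than advancing by exactly one. This is precisely why the statement restricts to $d_\theta(x,y) < \epsilon$, and choosing $\epsilon = \theta$ is the cleanest way to encode the requirement $x_1 = y_1$, after which the argument above applies verbatim. Since the whole computation rests on a single use of bijectivity in the second variable, I expect no further obstacle beyond getting this localization bookkeeping right.
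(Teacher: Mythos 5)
Your proof is correct and follows essentially the same argument as the paper's: both track the first index of disagreement and use bijectivity of $b \mapsto \phi(a,b)$ in the second variable to show it advances by exactly one under $\Phi$, giving $d(\Phi(x),\Phi(y)) = \theta^{-1} d(x,y)$. Your version is in fact slightly more careful, since you make explicit the choice $\epsilon = \theta$ forcing $x_1 = y_1$ (i.e.\ $N \geq 2$), a point the paper's proof uses implicitly without fixing $\varepsilon$.
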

\begin{proof}
  Indeed, if $d(x, y)= \theta^N<\varepsilon, x \neq y$ then $x=(a_1,...., a_{N-1}, x_{N},....)$ and $y=(a_1,...., a_{N-1}, y_{N},....)$ with $x_{N} \neq y_{N}$. As $$\Phi(x)= (\phi(a_1, a_2), \phi(a_2, a_3), ..., \phi(a_{N-2}, a_{N-1}), \phi(a_{N-1}, x_{N}),...)$$ and $$\Phi(y)= (\phi(a_1, a_2), \phi(a_2, a_3), ..., \phi(a_{N-2}, a_{N-1}), \phi(a_{N-1}, y_{N}),...)$$ we have $\phi(a_{N-1}, x_{N}) \neq \phi(a_{N-1}, y_{N})$ meaning that
  $d(\Phi(x), \Phi(y))= \theta^{N-1}= \theta^{-1} d(x,y)$. Thus we can take $L:=\theta^{-1}> 1$.
\end{proof}

\begin{theorem} \label{thm:Phi closing lemma}
   The map $\Phi$ satisfies the closing lemma property, that is, for every $\varepsilon>0$ there exists $\delta>0$ such that if $x \in \Omega$ and $n \geq 0$ are such that $d\left(\Phi^n(x), x\right)<\delta$, then there exists $y \in \Omega$ such that $\Phi^n(y)=y$ and $d\left(\Phi^k(y), \Phi^k(x)\right)<\varepsilon$ for all $0 \leq k \leq n-1$.
\end{theorem}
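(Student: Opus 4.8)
The plan is to establish the closing lemma in the standard way for expanding maps with contracting inverse branches, namely by exhibiting the periodic point $y$ as the fixed point of a single inverse branch of $\Phi^n$ that ``follows'' the orbit of $x$, and then using the contraction rates to obtain the shadowing estimate. The two ingredients I will use are that each $\tau_j$ is a uniform contraction by $\theta$ (as computed in Section \ref{sec:2}) and that $\Phi\circ\tau_j=\mathrm{id}$ for every $j$; the expansion of $\Phi$ from Lemma \ref{thm:Phi expanding lemma} is then only needed implicitly through the existence of these inverse branches. The case $n=0$ is trivial (take $y=x$, the shadowing condition being vacuous), so I assume $n\geq 1$.

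First I would assemble the inverse branch of $\Phi^n$ along the orbit of $x$. Setting
$$T_k:=\tau_{(\Phi^{k}x)_1}\circ\tau_{(\Phi^{k+1}x)_1}\circ\cdots\circ\tau_{(\Phi^{n-1}x)_1},\qquad 0\le k\le n,$$
with the convention $T_n=\mathrm{id}$ and writing $T:=T_0$, one checks directly from $\Phi\circ\tau_j=\mathrm{id}$ that $\Phi^{n}\circ T=\mathrm{id}$, so $T$ is a genuine inverse branch of $\Phi^n$. Peeling off the innermost factor repeatedly (using $\tau_{(\Phi^{m}x)_1}(\Phi^{m+1}x)=\Phi^{m}x$) gives $T(\Phi^n(x))=x$, and more generally $T_k(\Phi^n(x))=\Phi^k(x)$. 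Since $T$ is a composition of $n$ contractions of ratio $\theta$, it is a contraction with $\mathrm{Lip}(T)\le\theta^n<1$ on the complete space $\Omega$; by the Banach fixed point theorem it has a unique fixed point $y=T(y)$, and then $\Phi^n(y)=\Phi^n(T(y))=y$, so $y$ is $\Phi$-periodic with $\Phi^n(y)=y$.

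For the shadowing I would first bound $d(y,x)$. From $y=T(y)$, $x=T(\Phi^n(x))$ and the triangle inequality,
$$d(y,x)=d\bigl(T(y),T(\Phi^n(x))\bigr)\le\theta^{n}\,d(y,\Phi^n(x))\le\theta^{n}\bigl(d(y,x)+d(x,\Phi^n(x))\bigr)<\theta^{n}\bigl(d(y,x)+\delta\bigr),$$
whence $d(y,x)<\frac{\theta^{n}}{1-\theta^{n}}\delta\le\frac{\theta}{1-\theta}\delta$ and therefore $d(y,\Phi^n(x))<\frac{1}{1-\theta}\delta$. The key identity is that the intermediate branches transport the fixed point to the whole orbit: since $\Phi\circ\tau_j=\mathrm{id}$ one has $\Phi^{k}\circ T=T_k$, so $\Phi^k(y)=T_k(y)$ while $\Phi^k(x)=T_k(\Phi^n(x))$. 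Using $\mathrm{Lip}(T_k)\le\theta^{\,n-k}$,
$$d\bigl(\Phi^k(y),\Phi^k(x)\bigr)=d\bigl(T_k(y),T_k(\Phi^n(x))\bigr)\le\theta^{\,n-k}\,d(y,\Phi^n(x))<\theta^{\,n-k}\,\tfrac{1}{1-\theta}\delta\le\tfrac{1}{1-\theta}\delta$$
for all $0\le k\le n-1$. Choosing $\delta:=(1-\theta)\varepsilon$ then yields $d(\Phi^k(y),\Phi^k(x))<\varepsilon$ for every $0\le k\le n-1$, as required.

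The only genuinely nonroutine point, and the step I would present most carefully, is the bookkeeping identity $\Phi^{k}\circ T=T_k$ together with $\Phi^k(x)=T_k(\Phi^n(x))$: this is what upgrades the control of $d(y,x)$ at time $0$ into control of $d(\Phi^k(y),\Phi^k(x))$ at \emph{all} intermediate times $k$, and it relies on the inverse branches being indexed precisely by the first symbols $(\Phi^k x)_1$ of the orbit. Everything else (contraction ratios, Banach fixed point, the geometric bound $\frac{\theta^n}{1-\theta^n}\le\frac{\theta}{1-\theta}$) is elementary once the correct branch $T$ has been identified.
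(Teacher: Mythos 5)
Your proposal is correct, and its skeleton is the same as the paper's: both build the composition of inverse branches $\tau_{j}$ indexed by the first symbols along the orbit of $x$, obtain the periodic point $y$ as the unique fixed point of this $\theta^{n}$-contraction via Banach's fixed point theorem, and bound $d(y,x)$ by the triangle-inequality trick $d(y,x)\le\theta^{n}(d(y,x)+\delta)$. The genuine difference is that the paper stops there: its proof only establishes $d(y,x)<\varepsilon$, i.e.\ the shadowing estimate at time $k=0$, and never verifies the stated conclusion $d(\Phi^{k}(y),\Phi^{k}(x))<\varepsilon$ for the intermediate times $0<k\le n-1$. Your bookkeeping identity $\Phi^{k}\circ T=T_{k}$ together with $\Phi^{k}(x)=T_{k}(\Phi^{n}(x))$, plus the bound $\mathrm{Lip}(T_{k})\le\theta^{\,n-k}$, is precisely the missing step that upgrades the time-$0$ control to all intermediate times, so your write-up is actually more complete than the paper's; the only cosmetic difference is the final threshold ($\delta=(1-\theta)\varepsilon$ versus the paper's $\delta<(\theta^{-1}-1)\varepsilon$), both of which work uniformly in $n$.
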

\begin{proof}
  Consider the orbit $\Phi^k(x)$ for $0 \leq k \leq n$  there is always $j_k$ such that $\tau_{j_k}(\Phi^k(x))=\Phi^{k-1}(x)$ thus
  $$\tau_{j_1}(\cdots \tau_{j_n}(\Phi^n(x))= x.$$
  Since $\tau_{j_1}\circ\ldots \circ\tau_{j_n}$ is a contraction by $\theta^{n}<<1$ we get a unique fixed point $y \in \Omega$ such that $\tau_{j_1}(\cdots \tau_{j_n}(y))= y.$ It is easy to see that $\Phi^n(y)=y$.

  In this way,
  $$d(y,x) = d(\tau_{j_1}(\cdots \tau_{j_n}(y)),\tau_{j_1}(\cdots \tau_{j_n}(\Phi^n(x)))) =\theta^{n} d(y,\Phi^n(x)) \leq$$ $$\leq \theta^{n} (d(y,x) + d(x,\Phi^n(x))) < \theta^{n} (d(y,x) + \delta)$$
  or
  $$d(y,x)< \frac{\theta^{n} \delta}{1- \theta^{n}}.$$
  So, if $\frac{\theta^{n} \delta}{1- \theta^{n}} < \varepsilon$ or
  $$\delta < \frac{1- \theta^{n}}{\theta^{n} } \varepsilon= (\theta^{-n} -1)\varepsilon$$
  then the proof is concluded since $\theta^{-n} -1 > 0$ is bounded away from zero for all $n$.
\end{proof}

Recall that, for a potential $A: \Omega \to \mathbb{R}$, the sum
$$S_n A(x):=\sum_{k=0}^{n-1} A(\Phi^{k}(x))$$
is well-defined.

The Walters property (w.r.t. $\Phi$) is: for every $\zeta>0$ there exist $\varepsilon>0$ such that if $x, y \in \Omega$ and $n \geq 0$ are such that $d\left(\Phi^{k}(x), \Phi^{k}(y)\right)<\varepsilon$ for all $0 \leq k \leq n-1$, then $\left|S_n A(x)-S_n A(y)\right|<\zeta$ (see \cite{CDLS}).

\begin{theorem}\label{thm:Lips implica Walters}
   If $A: \Omega \to \mathbb{R}$ is a Lipschitz (H\"{o}lder) potential then it has the Walters property w.r.t. $\Phi$.
\end{theorem}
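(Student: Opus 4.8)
The plan is to show that Lipschitz continuity of $A$, combined with the uniform expansion of $\Phi$ established in Lemma \ref{thm:Phi expanding lemma}, forces the Walters property by directly controlling the difference $|S_n A(x) - S_n A(y)|$ term by term. The key observation is that the Walters hypothesis — namely $d(\Phi^k(x), \Phi^k(y)) < \varepsilon$ for all $0 \leq k \leq n-1$ — already hands us smallness of each intermediate iterate, so we do not need to reconstruct the orbit from a closing-lemma argument; we simply sum the Lipschitz bounds.

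First I would fix $\zeta > 0$ and unwind the definition $S_n A(x) - S_n A(y) = \sum_{k=0}^{n-1} \bigl( A(\Phi^k(x)) - A(\Phi^k(y)) \bigr)$, and apply the triangle inequality together with the Lipschitz bound $|A(u) - A(v)| \leq \mathrm{Lip}(A)\, d(u,v)$, giving
\begin{equation}
|S_n A(x) - S_n A(y)| \leq \mathrm{Lip}(A) \sum_{k=0}^{n-1} d\bigl(\Phi^k(x), \Phi^k(y)\bigr).
\end{equation}
The heart of the matter is that these distances do not merely stay below $\varepsilon$: because $\Phi$ expands by the factor $L = \theta^{-1} > 1$, once we know $d(\Phi^{n-1}(x), \Phi^{n-1}(y)) < \varepsilon$ (the largest index), the earlier iterates must be geometrically smaller. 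Concretely I would argue that for $0 \leq k \leq n-1$ the hypothesis $d(\Phi^k(x),\Phi^k(y)) < \varepsilon$ combined with expansion yields $d(\Phi^k(x), \Phi^k(y)) \leq \theta^{\,(n-1)-k}\, d(\Phi^{n-1}(x), \Phi^{n-1}(y)) \leq \theta^{\,(n-1)-k}\, \varepsilon$, since applying $\Phi$ multiplies the metric by $\theta^{-1}$ as long as the points remain within the expansion radius $\epsilon$.

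Substituting this geometric decay into the sum gives
\begin{equation}
|S_n A(x) - S_n A(y)| \leq \mathrm{Lip}(A)\, \varepsilon \sum_{k=0}^{n-1} \theta^{\,(n-1)-k} \leq \mathrm{Lip}(A)\, \varepsilon\, \frac{1}{1-\theta},
\end{equation}
a bound that is uniform in $n$. Thus, given $\zeta > 0$, choosing $\varepsilon < \zeta (1-\theta)/\mathrm{Lip}(A)$ (small enough to also respect the expansion radius $\epsilon$ of Lemma \ref{thm:Phi expanding lemma}) makes the right-hand side strictly less than $\zeta$ for every $n$, which is exactly the Walters property. The Hölder case is identical after replacing $\mathrm{Lip}(A)\, d(u,v)$ by $C\, d(u,v)^\alpha$ and noting that the resulting geometric series $\sum_k \theta^{\alpha((n-1)-k)}$ still converges uniformly.

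The main obstacle to watch is the passage from "each iterate is within $\varepsilon$" to the geometric decay estimate: the expansion property in Lemma \ref{thm:Phi expanding lemma} is only asserted for points closer than the threshold $\epsilon$, so I must verify that all the relevant pairs $\Phi^k(x), \Phi^k(y)$ remain within that threshold throughout the iteration. This is guaranteed by taking $\varepsilon \leq \epsilon$ at the outset, since the Walters hypothesis supplies smallness at \emph{every} index $0 \leq k \leq n-1$ simultaneously, so expansion may be applied legitimately at each step when propagating the bound backward from index $n-1$ to index $k$.
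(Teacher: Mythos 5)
Your proposal is correct and follows essentially the same route as the paper's proof: bound $|S_n A(x)-S_n A(y)|$ by the Lipschitz constant times the sum of the iterate distances, use the expansion by $\theta^{-1}$ to show these distances decay geometrically as $\theta^{\,n-1-k}\varepsilon$ away from the index $n-1$, sum the geometric series to get the uniform bound $\mathrm{Lip}(A)\,\varepsilon/(1-\theta)$, and choose $\varepsilon<\zeta(1-\theta)/\mathrm{Lip}(A)$. Your extra care about keeping all pairs within the expansion radius $\epsilon$ is a minor refinement the paper leaves implicit, not a different argument.
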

\begin{proof}
   Consider $\zeta>0$, and $x, y \in \Omega$ and $n \geq 0$ are such that 
   $$d\left(\Phi^{k}(x), \Phi^{k}(y)\right)<\varepsilon,$$ for all $0 \leq k \leq n-1$. As $\Phi$ is expanding by $\theta^{-1}$ we know that 
   $$d\left(\Phi^{n-1}(x), \Phi^{n-1}(y)\right)<\varepsilon,$$ 
   only if $d(x,y)<\theta^{n-1} \, \varepsilon$ thus
   $$\left|S_n A(x)-S_n A(y)\right|\leq \sum_{k=0}^{n-1} \left|A(\Phi^{k}(x))-A(\Phi^{k}(y)))\right|\leq {\rm Lip}(A)  \sum_{k=0}^{n-1} \theta^{-k}d(x,y)\leq  $$
   $$< {\rm Lip}(A)  \sum_{k=0}^{n-1} \theta^{-k} \theta^{n-1} \, \varepsilon \leq \frac{{\rm Lip}(A)}{1-\theta} \varepsilon <\zeta $$
   if
   $$\varepsilon < \frac{1-\theta}{{\rm Lip}(A)}\, \zeta,$$
   concluding our proof (the H\"{o}lder case is similar).
\end{proof}

Let $X$ be a compact metric space, $T: X \rightarrow X$ a continuous map, and $A: X \rightarrow \mathbb{R}$. We recall that $A: X \rightarrow \mathbb{R}$ is a coboundary if $A=h-h\circ T$  for some continuous map $h: X \rightarrow \mathbb{R}$ and $A$ is cohomologous to $B: X \rightarrow \mathbb{R}$ if $A-B$ is a coboundary, that is, $A=B+ h-h\circ T$. In particular, any coboundary is cohomologous to zero.

An important result is
\begin{theorem}\label{thm:Livisic}
(Livsic Theorem - see Section 19.2 in \cite{Katok}) Let $X$ be a compact metric space, $T: X \rightarrow X$ a continuous map satisfying the Closing Lemma and possessing a point whose orbit is dense, and $A: X \rightarrow \mathbb{R}$ a continuous function satisfying the Walters Property. Then $A$ is a coboundary($A=h-h\circ T$) if and only if for every periodic point $x=T^n(x) \in X$, we have $S_n A(x)=0$.
\end{theorem}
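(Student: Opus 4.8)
The plan is to prove the two implications separately, with essentially all the content concentrated in the sufficiency direction. The necessity is immediate: if $A = h - h\circ T$ and $x = T^n(x)$, the Birkhoff sum telescopes,
\[
S_n A(x) = \sum_{k=0}^{n-1}\big(h(T^k x) - h(T^{k+1}x)\big) = h(x) - h(T^n x) = 0.
\]

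For the sufficiency, I would fix a point $p\in X$ whose orbit $\{T^k p : k \ge 0\}$ is dense and define a function $h$ on this orbit by $h(T^k p) := -\,S_k A(p)$ (so $h(p)=0$, the empty sum). On the orbit this is tautologically a solution of the cohomological equation, since
\[
h(T^k p) - h(T^{k+1} p) = -S_k A(p) + S_{k+1}A(p) = A(T^k p).
\]
I first need $h$ to be well defined on the orbit: if $T^k p = T^\ell p$ with $k<\ell$, then $q = T^k p$ is a periodic point of period $m = \ell - k$, so the hypothesis gives $S_m A(q) = 0$, i.e. $S_\ell A(p) = S_k A(p)$, and the two prescribed values of $h$ coincide. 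Thus the vanishing hypothesis on periodic orbits already guarantees consistency.

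The heart of the argument, and the step I expect to be the main obstacle, is to show that $h$ is \emph{uniformly continuous} on the orbit, so that it extends continuously to $\overline{\{T^k p\}} = X$. Given $\zeta>0$, I would choose $\varepsilon>0$ from the Walters Property and then $\delta>0$ from the Closing Lemma. Suppose two orbit points satisfy $d(T^k p, T^\ell p)<\delta$ with $k<\ell$ and $m=\ell-k$; the computation above gives
\[
|h(T^k p) - h(T^\ell p)| = |S_m A(T^k p)|.
\]
Since $d\big(T^m(T^k p), T^k p\big) = d(T^\ell p, T^k p) < \delta$, the Closing Lemma produces a periodic point $y$ with $T^m y = y$ and $d(T^j y, T^j(T^k p)) < \varepsilon$ for $0\le j\le m-1$. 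The Walters Property then yields $|S_m A(y) - S_m A(T^k p)| < \zeta$, and because $y$ is periodic the hypothesis forces $S_m A(y)=0$; hence $|h(T^k p) - h(T^\ell p)| < \zeta$ whenever the two points are $\delta$-close. The decisive feature, which makes this delicate, is that the Walters constant $\varepsilon$ must be chosen independently of the return time $m$, so that the estimate is uniform over arbitrarily long orbit segments; this is precisely why the Walters Property (rather than plain continuity) and the Closing Lemma are both indispensable.

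Finally, uniform continuity on a dense subset of the compact metric space $X$ gives a unique continuous extension $h:X\to\mathbb{R}$. Approximating any $x\in X$ by orbit points $T^{k}p$ and passing to the limit in $h(T^k p) - h(T^{k+1}p) = A(T^k p)$, using continuity of $A$, $h$ and $T$, the identity $A = h - h\circ T$ holds on all of $X$, which completes the proof.
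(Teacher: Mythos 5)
Your proof is correct and complete: the necessity direction telescopes as you say, and the sufficiency argument (defining $h(T^k p) = -S_k A(p)$ along a dense orbit, using the periodic-orbit hypothesis for well-definedness, combining the Closing Lemma with the Walters Property to get uniform continuity with constants independent of the return time $m$, and then extending by density) is exactly the standard route. The paper itself offers no proof of this statement — it simply cites Section 19.2 of \cite{Katok} — and your argument is essentially the one found there, so there is nothing to reconcile between the two.
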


Obviously, $X=\Omega$ is a compact metric space and $T=\Phi$ is a continuous map. Moreover, each Lipschitz potential is continuous, and  theorems \ref{thm:Phi closing lemma}  and \ref{thm:Lips implica Walters} ensure that the Closing lemma and Walters property are true for $\Phi$. Thus we have the following corollary.
\begin{corollary} \label{oit}
   If $A: \Omega \to \mathbb{R}$ is a Lipschitz (H\"{o}lder) potential and $\Phi$ is transitive,  then $A$ is a coboundary ($A=h-h\circ\Phi$) if and only if for every periodic point $x=\Phi^n(x) $, we have $S_n A(x)=0$.
\end{corollary}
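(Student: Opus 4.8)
The plan is to obtain the corollary as an immediate specialization of the abstract Livsic Theorem (Theorem \ref{thm:Livisic}) to the triple $(X,T,A)=(\Omega,\Phi,A)$; the entire task therefore reduces to verifying that each hypothesis of that theorem is met in the present setting, since the preceding results were engineered precisely to supply the nontrivial inputs. There is no new analysis to perform — only an assembly of already-established facts.

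First I would record that $X=\Omega=\{1,\ldots,r\}^{\mathbb{N}}$ is a compact metric space (a countable product of finite discrete alphabets, compact by Tychonoff, metrized by the $d$ introduced in Section \ref{sec:2}) and that $T=\Phi$ is continuous, as already noted after \eqref{tryp1}. Next, the Closing Lemma clause is exactly the content of Theorem \ref{thm:Phi closing lemma}, whose proof contracts the composed inverse branches $\tau_{j_1}\circ\cdots\circ\tau_{j_n}$ (by $\theta^{n}$) to produce the shadowing periodic point; so that hypothesis holds for $\Phi$. The existence of a point with dense orbit is furnished directly by the transitivity assumption on $\Phi$ stated in the corollary (cf.\ Theorem \ref{thm:transitivity} and Remark \ref{rre}): here transitivity is read as the assertion that $\Phi$ possesses an orbit dense in $\Omega$, which is precisely the dense-orbit clause required by Theorem \ref{thm:Livisic}. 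Finally, since $A$ is Lipschitz (or H\"older), Theorem \ref{thm:Lips implica Walters} guarantees that $A$ enjoys the Walters property with respect to $\Phi$.

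With all four hypotheses in hand, Theorem \ref{thm:Livisic} applies verbatim and yields that $A=h-h\circ\Phi$ for some continuous $h$ if and only if $S_n A(x)=0$ for every periodic point $x=\Phi^n(x)$, which is exactly the assertion of the corollary. I expect essentially no obstacle at this stage: the substantive work — the expanding estimate of Lemma \ref{thm:Phi expanding lemma}, the contraction-based construction of the shadowing periodic orbit in Theorem \ref{thm:Phi closing lemma}, and the Lipschitz-to-Walters passage of Theorem \ref{thm:Lips implica Walters} — was carried out in the earlier results. The only point demanding minimal care is matching the stated ``transitivity'' of $\Phi$ with the ``possesses a point whose orbit is dense'' clause of Theorem \ref{thm:Livisic}; once this identification is made via Theorem \ref{thm:transitivity}, the corollary follows at once.
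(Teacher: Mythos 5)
Your proposal is correct and matches the paper's own argument exactly: the paper derives the corollary by verifying the hypotheses of Theorem \ref{thm:Livisic} (compactness of $\Omega$, continuity of $\Phi$, the Closing Lemma from Theorem \ref{thm:Phi closing lemma}, the Walters property from Theorem \ref{thm:Lips implica Walters}, and the dense orbit from transitivity) and then applying it verbatim. No difference in route or substance.
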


The map  $\Phi$ of expression \eqref{pkit} is transitive as shown in Theorem \ref{thm:transitivity}. Therefore, Corollary \ref{oit} is true for such $\Phi$.

\begin{remark} \label{sozinho3}
Note that given Lipschitz (Holder) potentials $A$,$B$, and the $\Phi$-equilibrium  probabilities $m_A$ and $m_B$, there exists normalized potentials $\bar A$ and $\bar B$, such that the  equilibrium  probabilities for $\bar A$ and $\bar B$ are respectively $m_A$ and $m_B$. It follows from the end of Section 2 in \cite{Lo1}  
that  $m_A\neq m_B$, if and only if $\bar A \neq \bar B$. It follows that for a given Holder potential $A$, the equilibrium probability $m_A$ is the maximal entropy measure, if and only if, $\bar A$ is constant.
\end{remark}

\begin{proposition} If $A$ is a Lipschitz (H\"{o}lder) potential such that  for some constant $c$, is true that for any $n$ periodic orbit  $x=\Phi^n(x) \in X$, we have that $S_n (A+c)(x)=0,$ then the equilibrium probability for $A$ is the maximal entropy measure.

\end{proposition}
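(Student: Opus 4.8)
The plan is to recognize the periodic-orbit condition as a Livsic coboundary criterion, deduce that $A$ is cohomologous to a constant, and then use the invariance of equilibrium probabilities under coboundaries together with the characterization of the maximal entropy measure.

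First I would note that, since $A$ is Lipschitz (H\"older) and $c$ is a constant, the potential $A+c$ is again Lipschitz (H\"older). The map $\Phi$ defined by \eqref{pkit} is transitive by Theorem \ref{thm:transitivity}, satisfies the Closing Lemma by Theorem \ref{thm:Phi closing lemma}, and every Lipschitz (H\"older) potential enjoys the Walters property with respect to $\Phi$ by Theorem \ref{thm:Lips implica Walters}. Hence all the hypotheses of Corollary \ref{oit} are satisfied for the potential $A+c$. By assumption $S_n(A+c)(x)=0$ for every periodic point $x=\Phi^n(x)$, which is exactly the condition appearing in Corollary \ref{oit}; therefore $A+c$ is a coboundary, i.e. there exists a continuous (indeed Lipschitz/H\"older) function $h:\Omega\to\mathbb{R}$ with $A+c=h-h\circ\Phi$. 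Equivalently, $A=(-c)+(h-h\circ\Phi)$, so $A$ is cohomologous to the constant potential $-c$.

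Next I would invoke the fact, recalled earlier in Section \ref{ther} from \cite{PP}, that adding a coboundary $h-h\circ\Phi$ to a potential changes neither the pressure nor the equilibrium probability. Thus the $\Phi$-equilibrium probability $m_A$ coincides with the $\Phi$-equilibrium probability of the constant potential $-c$. For a constant potential the variational problem $P_\Phi(-c)=\sup_{\rho\in\mathcal{M}(\Phi)}\{h_\Phi(\rho)\}-c$ is maximized precisely by the measure of maximal entropy; hence $m_A$ is the maximal entropy measure. Equivalently, one can argue through Remark \ref{sozinho3}: since cohomologous potentials share the same normalization, $\bar A$ equals the normalization of $-c$. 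As $\Phi$ is $r$-to-$1$, one has $\mathcal{L}_{-c,\Phi}(1)=r\,e^{-c}$, so the eigenfunction is $1$ and the eigenvalue is $r\,e^{-c}$, giving $\overline{-c}=-c-\log(r\,e^{-c})=-\log r$, a constant; by Remark \ref{sozinho3} this constancy of $\bar A$ is precisely the condition for $m_A$ to be the maximal entropy measure.

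The only delicate point, and the place where I would be careful, is the passage through Corollary \ref{oit}: one must verify that $A+c$ (not merely $A$) meets all the structural hypotheses (transitivity, Closing Lemma, Walters property), and that the transitivity hypothesis on $\Phi$ is indeed available, which it is by Theorem \ref{thm:transitivity}. Once the coboundary representation $A+c=h-h\circ\Phi$ is in hand, the remainder is bookkeeping: it suffices that $h$ be continuous for the coboundary-invariance of equilibrium states to apply, so no additional regularity argument is needed beyond what the Livsic-type Corollary \ref{oit} already provides.
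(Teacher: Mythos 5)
Your proposal is correct and follows essentially the same route as the paper: apply the Livsic-type result (the paper cites Theorem \ref{thm:Livisic} directly, you route through Corollary \ref{oit}, which is the same argument with the hypotheses for $\Phi$ already verified) to get $A+c=h-h\circ\Phi$, then conclude that a potential cohomologous to a constant has the maximal entropy measure as its equilibrium state. Your write-up merely fills in details the paper leaves implicit, namely the verification of transitivity, the Closing Lemma, and the Walters property, and the final bookkeeping that coboundaries preserve equilibrium states while constants have the maximal entropy measure as equilibrium.
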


\begin{proof}  If for any $n$ periodic orbit  $x=\Phi^n(x) \in X$, we have that $S_n (A+c)(x)=0,$ then from Theorem \ref{thm:Livisic} we get that there exists  Lipschitz (H\"{o}lder) function $g$ such that
$$A + c = g - g \circ \Phi.$$

This shows that $A$ is $\Phi$ coboundary to a constant, and therefore the $\Phi$-equilibrium probability for $A$ is the maximal entropy measure.

\end{proof}

\begin{proposition}If $A$ is a Lipschitz (H\"{o}lder) potential such that  for a certain $n$ periodic orbit  $x=\Phi^n(x) $, we have that $S_n (A)(x)=n c_1,$ and for another  $m$ periodic orbit  $x=\Phi^m(x) $, we have that $S_m (A)(x)=m c_2,$ where $c_1\neq c_2$, then the equilibrium probability for $A$ is not the maximal entropy measure.

\end{proposition}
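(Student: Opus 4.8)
The plan is to argue by contraposition, using the normalization characterization of the maximal entropy measure recorded in Remark \ref{sozinho3}, and then to exploit the fact that a function cohomologous to a constant has prescribed Birkhoff sums on every periodic orbit.

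First I would suppose, for the sake of contradiction, that the $\Phi$-equilibrium probability $m_A$ for $A$ is the measure of maximal entropy. By Remark \ref{sozinho3} this is equivalent to the normalization $\bar{A}$ being constant. Recalling that
$$\bar{A} = A + \log \psi_A - \log(\psi_A \circ \Phi) - \log \lambda_A,$$
and writing $g := \log \psi_A$ (a Lipschitz function, since $\psi_A$ is a strictly positive Lipschitz eigenfunction by Theorem \ref{rer1}), the constancy of $\bar{A}$ says exactly that there is a constant $\kappa$ with
$$A = \kappa + g \circ \Phi - g.$$
In other words, $A - \kappa$ is a $\Phi$-coboundary.

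Next I would evaluate the Birkhoff sums on the two given periodic orbits. For any point $x$ of period $p$ (that is, $\Phi^p(x) = x$), the telescoping identity gives
$$S_p(A)(x) = \sum_{k=0}^{p-1}\bigl[\kappa + g(\Phi^{k+1}(x)) - g(\Phi^k(x))\bigr] = p\kappa + g(\Phi^p(x)) - g(x) = p\kappa.$$
Applying this to the $n$-periodic orbit yields $n\kappa = S_n(A)(x) = n c_1$, hence $\kappa = c_1$; applying it to the $m$-periodic orbit yields $m\kappa = S_m(A)(x) = m c_2$, hence $\kappa = c_2$. Therefore $c_1 = \kappa = c_2$, contradicting the hypothesis $c_1 \neq c_2$.

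Concluding, the assumption that $m_A$ is the maximal entropy measure is untenable, so the equilibrium probability for $A$ is not the maximal entropy measure. The only nontrivial ingredient is the translation of ``$m_A$ is maximal entropy'' into ``$A$ is cohomologous to a constant'', which is precisely supplied by Remark \ref{sozinho3} together with the explicit form of the normalization; once this coboundary structure is in hand, the periodic-orbit computation is a routine telescoping and carries no real obstacle. I note that Livsic's theorem, Corollary \ref{oit}, is not strictly needed for this direction; it would be required only to reconstruct $g$ from the vanishing of periodic sums in the converse situation.
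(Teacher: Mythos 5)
Your proof is correct and takes essentially the same route as the paper's: translate the maximal-entropy hypothesis into ``$A$ is cohomologous to a constant'' (the paper writes $A = g - g\circ\Phi + c$ directly, you derive it from the constancy of $\bar A$ via Remark \ref{sozinho3}), and then telescope the Birkhoff sums over the two periodic orbits to force $c_1 = c_2$, a contradiction. The only difference is that you supply the justification for the coboundary step that the paper merely asserts, which strengthens rather than changes the argument.
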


\begin{proof}  A   Lipschitz (H\"{o}lder) potential $A$ such the $\Phi$-equilibrium probability $\mu_A$ is the maximal entropy measure
is of the form
$$A  = g - g \circ \Phi +c,$$
where $c$ is a constant.

Therefore, for any  $k$ periodic orbit  $x=\Phi^k(x) $, we have that $S_k (A)(x)=k \,c.$
 In this way, under the above hypotheses, if $c_1\neq c_2$,  we reach  a contradiction.

\end{proof}

\begin{theorem} \label{thm:transitivity}
   If the correspondence $i \to \phi(a, i)$ is bijective for any $a \in \{1, \ldots, r\}$ then the map $\Phi$ is transitive, that is, there exists $y \in \Omega$ such that the orbit of $y$ is dense in $\Omega$.
\end{theorem}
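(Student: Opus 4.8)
The plan is to show that $\Phi$ is \emph{topologically exact} (locally eventually onto): the forward image of every cylinder becomes, after finitely many steps, the whole space $\Omega$. From this strong form of topological transitivity, the existence of a single point with dense orbit will follow from the Birkhoff transitivity theorem, using that $\Omega$ is a compact, perfect metric space. The bijectivity hypothesis \eqref{pkit} is exactly what drives the exactness.

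First I would compute the image of a cylinder. Fix a word $a_1 \ldots a_p$ and take $x \in \overline{a_1 \ldots a_p}$. By the definition \eqref{tryp1} of $\Phi$, the first $p-1$ coordinates of $\Phi(x)$ are $\phi(a_1,a_2), \phi(a_2,a_3), \ldots, \phi(a_{p-1},a_p)$, which are completely determined, while the $p$-th coordinate equals $\phi(a_p, x_{p+1})$. Since $b \mapsto \phi(a_p, b)$ is a bijection of $\mathcal{A}$ by \eqref{pkit}, as $x_{p+1}$ ranges over $\mathcal{A}$ the value $\phi(a_p, x_{p+1})$ ranges over all of $\mathcal{A}$, and the coordinates after the $p$-th are unconstrained. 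Hence
\begin{equation*}
\Phi(\overline{a_1 \ldots a_p}) = \overline{\phi(a_1,a_2)\,\phi(a_2,a_3)\cdots \phi(a_{p-1},a_p)},
\end{equation*}
a cylinder of length $p-1$. Iterating, $\Phi^{k}(\overline{a_1 \ldots a_p})$ is a cylinder of length $p-k$ for $0 \le k \le p-1$, so $\Phi^{p-1}(\overline{a_1 \ldots a_p})$ is a single one-symbol cylinder and therefore
\begin{equation*}
\Phi^{p}(\overline{a_1 \ldots a_p}) = \Omega.
\end{equation*}

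Next I would deduce topological transitivity. Any nonempty open set $U \subseteq \Omega$ contains some cylinder $\overline{a_1 \ldots a_p}$, and by the computation above $\Phi^{p}(U) \supseteq \Phi^{p}(\overline{a_1 \ldots a_p}) = \Omega$. Thus $\Phi^{n}(U) \cap V \neq \emptyset$ for every nonempty open $V$ and every $n \ge p$, which is (much more than) topological transitivity of $\Phi$.

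Finally, to produce a single point whose orbit is dense I would invoke the Birkhoff transitivity theorem: on a compact metric space with no isolated points, a continuous topologically transitive map admits a residual set of points with dense forward orbit, in particular a nonempty such set. As $\Omega$ is a Cantor set — compact and perfect — this applies and yields the desired $y$. The one step requiring care is precisely this passage from the ``openness'' statement to a single dense orbit; alternatively one may construct $y$ explicitly by concatenation. Enumerating the countable family of cylinders as $C_1, C_2, \ldots$, one builds an increasing sequence of prefixes: having fixed $y_1, \ldots, y_N$, one uses the inverse branches $\tau_j$ (equivalently, that $(\Phi^{N}y)_1$ depends bijectively on $y_{N+1}$ once $y_1, \ldots, y_N$ are fixed) to extend the prefix so that $\Phi^{N}(y) \in C_{j+1}$, thereby forcing the orbit of $y$ to meet every cylinder and hence to be dense.
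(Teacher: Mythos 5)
Your proof is correct, but it takes a genuinely different route from the paper. The paper works entirely with the inverse branches $\tau_j$: it covers $\Omega$ by the sets $\tau_{j_1}\circ\cdots\circ\tau_{j_k}(\Omega)$ (each of diameter $<\theta^k$), builds a recursively defined sequence $Z_k$ that is pushed through \emph{all} $r^k$ such compositions at stage $k$, extracts a subsequential limit $y$ by compactness, and then verifies directly that suitable forward iterates $\Phi^{mk_i}(y)$ come $\varepsilon$-close to an arbitrary point--a semi-constructive argument whose last step needs a slightly delicate interplay between the limit $Z_{k_i}\to y$ and the (non-uniformly continuous family of) iterates $\Phi^{mk_i}$. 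You instead compute forward images of cylinders, using the bijectivity in \eqref{pkit} to get the exact identity $\Phi(\overline{a_1\ldots a_p})=\overline{\phi(a_1,a_2)\cdots\phi(a_{p-1},a_p)}$, and conclude that $\Phi^{p}(\overline{a_1\ldots a_p})=\Omega$; this establishes topological exactness, a strictly stronger property than what the paper proves, and then the Birkhoff/Baire category argument (legitimate here, since $\Omega$ is a compact, hence complete, separable metric space) yields a residual set of points with dense forward orbit. Your approach buys more for less: it is shorter, avoids the compactness-plus-continuity bookkeeping, and delivers exactness and a residual set of transitive points rather than a single one; what it gives up is the explicit flavor of the paper's construction, though your alternative concatenation argument (choosing $y_{N+1},y_{N+2},\ldots$ successively, using that $(\Phi^N y)_k$ is bijective in $y_{N+k}$ once earlier coordinates are frozen) recovers an explicit transitive point and is, in spirit, a forward-coordinates reformulation of the paper's inverse-branch construction.
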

\begin{proof}
  Consider the set of maps $\tau_j: \Omega \to \Omega$. We already know that under the hypothesis $i \to \phi(a, i)$ is bijective for any $a \in \{1, \ldots, r\}$, is true that $\Omega=\bigcup_{j} \tau_j(\Omega)$. By extension, for any $k \geq 1$ we get
  $$\Omega=\bigcup_{j_{1}\ldots j_{k}} \tau_{j_{1}}\circ\cdots\circ\tau_{j_{k}}(\Omega)$$
  a cover of $\Omega$ of diameter smaller than $\theta^k \to 0$ when $k \to \infty$. To simplify the notation we denote $\tau_{j_{1}\ldots j_{k}}:=\tau_{j_{1}}\circ\cdots\circ\tau_{j_{k}}$ and define recursively
  \[Z_{0} \in \Omega,\;\\
  Z_{k}:=\tau_{r\ldots r}\circ\cdots\circ\tau_{1\ldots 2 \, 1}\circ\tau_{1\ldots 1\, 1}(Z_{k-1})
  \]
  where the composition is take over all $k$-uples  $j_{1}\ldots j_{k}$.

  We claim that for any $j_{1}\ldots j_{k}$ and any $x \in \tau_{j_{1}}\circ\cdots\circ\tau_{j_{k}}(\Omega)$ and $w \in \Omega$ we get
  $d(x, \tau_{j_{1}\ldots j_{k}}(w)) < \theta^k$. Which is evident because each $\tau_j$ contracts by $\theta$ and $x \in \tau_{j_{1}}\circ\cdots\circ\tau_{j_{k}}(\Omega)$ means that $x=\tau_{j_{1}}\circ\cdots\circ\tau_{j_{k}}(w')$.

  Consider the sequence $Z_{k}, \, k \geq 0$ and, by the compactness of $\Omega$ a point $y \in \Omega$ for which $Z_{k_{i}} \to y$ when $i \to \infty$.

  We claim that the orbit of $y$ by $\Phi$ is dense. To see that take any $x \in \Omega$ and $\varepsilon>0$. Choose $k_{i}$ big enough to ensure $\theta^{k_{i}}< \varepsilon/2$. Notice that, by continuity, 
  $$\Phi^{m \, k_{i}}(y)=\Phi^{m \, k_{i}}(\lim_{i \to \infty} Z_{k_{i}})=\lim_{i \to \infty} \Phi^{m \, k_{i}}( Z_{k_{i}}).$$
  Let $x \in \tau_{j^{0}_{1}}\circ\cdots\circ\tau_{j^{0}_{k_{i}}}(\Omega)$, as $\Phi\circ\tau_j=Id$ we can choose $0\leq m \leq r^{k_{i}}$ in such way that
  $$\Phi^{m \, k_{i}}( Z_{k_{i}})= \tau_{j^{0}_{1}}\circ\cdots\circ\tau_{j^{0}_{k_{i}}}(w')$$
  and, by the above result we get $d(\Phi^{m \, k_{i}}( Z_{k_{i}}), x)< \theta^{k_{i}}< \varepsilon/2$.

  If we consider additionally $k_{i}$ big enough to $d( \Phi^{m \, k_{i}}(y), \Phi^{m \, k_{i}}( Z_{k_{i}}))< \varepsilon/2$, we obtain
  $$d(\Phi^{m \, k_{i}}(y), x) \leq  d( \Phi^{m \, k_{i}}(y), \Phi^{m \, k_{i}}( Z_{k_{i}}))+ d(\Phi^{m \, k_{i}}( Z_{k_{i}}), x)< \varepsilon,$$
  proving the density.


\end{proof}

\subsection{Periodic points} \label{Fi}
In this subsection, we describe an algorithmic procedure to compute the fixed points for $\Phi^m$, $m \in \mathbb{N}$. This characterization is helpful to be able to apply Livsic's Theorem.

We just assume that $i \to \phi(a, i)$ is bijective for any $a \in \{1, \ldots, r\}$.

 The inverse branches of $\Phi$ are, $\tau_j:  \Omega \to \Omega$ are given by the formula
   $$\tau_j(x)=y \text{ if } \Phi(y)=x \text{ and  } y_1=j$$
and are uniform contractions (by $\theta$).

\begin{theorem} \label{pepe}
   Suppose that $i \to \phi(a, i)$ is bijective for any $a \in \{1, \ldots, r\}$. Then,
   \begin{enumerate}
     \item Each periodic point $x$ for $\Phi$ with period $m$ is a solution of $\tau_{j_{1}}\circ\cdots\circ\tau_{j_{m}}(x)=x$ for some choice of $j_{1}, \ldots, j_{m} \in \{1, \ldots, r\}$;
     \item The periodic point, above defined, is the solution of the following recurrence
\[
\left\{
  \begin{array}{ll}
    x_1=j, \; z_{1}^{1}=i_{1},\;\ldots,\;  z_{1}^{m-1}=i_{m-1},\\
    x_{k+1}, \; z_{k+1}^{j}, 1 \leq j \leq m-1 \text{ solving }
    \left\{
      \begin{array}{ll}
        \phi(x_{k},x_{k+1})=z_{k}^{1}\\
        \phi(z_{k}^{1},z_{k+1}^{1})=z_{k}^{2}\\
        \phi(z_{k}^{2},z_{k+1}^{2})=z_{k}^{3}\\
        \cdots\\
        \phi(z_{k}^{m-1},z_{k+1}^{m-1})=x_{k}
      \end{array}
    \right.
  \end{array}
\right.
\] 
     \item The map $(j, i_{1}, \ldots, i_{m-1}) \to x$, defined as the solution of the above recurrence, is a bijection. In particular, $\Phi$ has exactly $r^m$ periodic points of period $m$.
   \end{enumerate}
\end{theorem}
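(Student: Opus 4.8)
The plan is to read the three claims as a dictionary between the dynamical description of an $m$-periodic orbit and its space-time diagram, proving (1) and (2) by translation and (3) by a counting argument through an explicit bijection. For (1) I would use only the structural fact that $\Phi\circ\tau_j=\mathrm{Id}$ and that $\tau_j(y)$ is the unique $\Phi$-preimage of $y$ whose first coordinate equals $j$ (the $r$ preimages being distinguished exactly by their first coordinate, as in \eqref{tryp2}). Given $x$ with $\Phi^m(x)=x$, write the orbit $x^{(l)}:=\Phi^l(x)$ for $0\le l\le m$, so that $x^{(m)}=x^{(0)}=x$. Since $x^{(l-1)}$ is a $\Phi$-preimage of $x^{(l)}$ with first coordinate $j_l:=(\Phi^{l-1}(x))_1$, we have $x^{(l-1)}=\tau_{j_l}(x^{(l)})$; composing these identities for $l=1,\ldots,m$ telescopes to $x=\tau_{j_1}\circ\cdots\circ\tau_{j_m}(x)$, which is exactly (1).

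For (2) I would introduce the space-time array $z_k^{j}:=(\Phi^j(x))_k$, with the convention $z_k^0=x_k$ and, by periodicity, $z_k^m=x_k$. Reading the relation $\Phi(\Phi^j(x))=\Phi^{j+1}(x)$ coordinate by coordinate gives $\phi\big((\Phi^j x)_k,(\Phi^j x)_{k+1}\big)=(\Phi^{j+1}x)_k$, which in the $z$-notation is precisely the displayed recurrence, the last equation $\phi(z_k^{m-1},z_{k+1}^{m-1})=x_k$ encoding the closing condition $\Phi^m(x)=x$ rather than an extra unknown. It then remains to observe that, given the first column $(x_1,z_1^1,\ldots,z_1^{m-1})=(j,i_1,\ldots,i_{m-1})$, the recurrence solves column by column: in passing from step $k$ to $k+1$ each of the $m$ equations has the form $\phi(a,\cdot)=b$ with $a,b$ already known, so bijectivity of $i\mapsto\phi(a,i)$ determines each of $x_{k+1},z_{k+1}^1,\ldots,z_{k+1}^{m-1}$ uniquely. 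Hence the recurrence admits a unique solution and produces a well-defined $x\in\Omega$.

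For (3) I would first check that the map $(j,i_1,\ldots,i_{m-1})\mapsto x$ lands among the $m$-periodic points: the recurrence forces $\Phi(x)=z^1$, $\Phi(z^1)=z^2$, $\ldots$, $\Phi(z^{m-1})=x$, whence $\Phi^m(x)=x$. Injectivity follows from $z^j=\Phi^j(x)$, so the initial column is recovered from $x$ as $\big(x_1,(\Phi x)_1,\ldots,(\Phi^{m-1}x)_1\big)$, and distinct data therefore give distinct points. Surjectivity follows from (1)--(2): any $x$ with $\Phi^m(x)=x$ has a space-time array satisfying the recurrence with initial column read off from $x$, so it is the image of $\big(x_1,(\Phi x)_1,\ldots,(\Phi^{m-1}x)_1\big)$. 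Thus the map is a bijection from $\{1,\ldots,r\}^m$ onto the set of $m$-periodic points, and since the domain has $r^m$ elements the count $r^m$ is immediate.

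The main obstacle I expect is purely organizational: fixing the two-dimensional indexing of the array $z_k^j$ and matching it exactly to the stated recurrence. The two points that must be handled with care are that solvability ``to the right'' (increasing $k$) is governed solely by bijectivity in the \emph{second} argument of $\phi$, and that the wrap-around equation $\phi(z_k^{m-1},z_{k+1}^{m-1})=x_k$ is the periodicity constraint, not a free parameter. Once the notation is fixed, each individual step is a short verification.
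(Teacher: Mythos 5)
Your proposal is correct and follows essentially the same route as the paper: the intermediate variables $z^j$ are exactly the orbit points $\Phi^j(x)$ (the paper calls them $z,w$ and, to avoid heavy notation, writes out the recurrence only for $m=3$), solvability column-by-column rests on bijectivity of $i \mapsto \phi(a,i)$ in the second argument, and the count $r^m$ comes from injectivity of the initial-data map (which the paper proves by contradiction, you by directly recovering the first column as $\bigl(x_1,(\Phi x)_1,\ldots,(\Phi^{m-1}x)_1\bigr)$) combined with surjectivity from parts (1)--(2). Your treatment of general $m$ via the double-indexed array is a clean formalization of what the paper sketches, so no gap.
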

\begin{proof}
(1) It is evident that each composition $\tau_{j_{1}}\circ\cdots\circ\tau_{j_{m}}$ has a unique fixed point $x$ such that $\tau_{j_{1}}\circ\cdots\circ\tau_{j_{m}}(x)=x$ because it is a contraction. Those are in fact periodic points of $\Phi$, because $\Phi^{m}(x)=x$. Reciprocally, $\Phi^{m}(x)=x$ taking  $\Phi(\Phi^{m}(x))=x$ we can find $j_1=x_1$  such that $\Phi^{m}(x)= \tau_{j_1}(x)$. Repeating this procedure we obtain $\tau_{j_{1}}\circ\cdots\circ\tau_{j_{m}}(x)=x$.

(2) To avoid extremely complex notation we consider the case $m=3$. The general case is obtained by the same reasoning. We must have
$$\tau_n\tau_i\tau_j(x)=x \text{ iff } \Phi(x)=\tau_i\tau_j(x) \text{ and  } x_1=n.$$
Let us to introduce new variables $z:=\tau_i\tau_j(x)$ and $w:=\tau_j(x)$ then
$$\tau_i(w)=z \text{ iff } \Phi(z)=w \text{ and  } z_1=i,$$
$$\tau_j(x)=w \text{ iff } \Phi(w)=x \text{ and  } w_1=j$$
and
$$\Phi(x)=z \text{ and  } x_1=n,$$
which is equivalent to the recursive system\\
$\phi(i,z_2)=w_1=j$, $\phi(z_2,z_3)=w_2$, $\phi(z_3,z_4)=w_3$, etc.\\
$\phi(j,w_2)=x_1=n$, $\phi(w_2,w_3)=x_2$, $\phi(w_3,w_4)=x_3$, etc.\\
$\phi(n,x_2)=z_1=i$, $\phi(x_2,x_3)=z_2$, $\phi(x_3,x_4)=z_3$, etc.\\
So we must solve the implicit recurrence
\[
\left\{
  \begin{array}{ll}
    x_1=n, \; z_1=i,\; w_1=j,\\
    x_{k+1}, \; z_{k+1}, \;w_{k+1} \text{ solving }
    \left\{
      \begin{array}{ll}
        \phi(x_{k},x_{k+1})=z_{k} \\
        \phi(z_{k},z_{k+1})=w_{k}\\
        \phi(w_{k},w_{k+1})=x_{k}
      \end{array}
    \right.
  \end{array}
\right.
\]

(3)  By (2) we know that, in order to find the periodic points we must solve the previous recurrence with the following initial conditions 
$$x_1=j, \; z_{1}^{1}=i_{1},\;\ldots,  z_{1}^{m-1}=i_{m-1}.$$
As we have a finite set, is enough to show that the correspondence is injective. Suppose, by contradiction, that for a different set of initial conditions
$$x_1=j, \; w_{1}^{1}=h_{1},\;\ldots,  w_{1}^{m-1}=h_{m-1}$$
we produce the same periodic point $x$. Using the respective equations we get for $k \geq 1$
$$\phi(x_{k},x_{k+1})=z_{k}^{1} \text{ and } \phi(x_{k},x_{k+1})=w_{k}^{1}$$
thus $z_{k}^{1} = w_{k}^{1}, \; k \geq 1$.
$$\phi(z_{k}^{1},z_{k+1}^{1})=z_{k}^{2} \text{ and } \phi(w_{k}^{1},w_{k+1}^{1})=w_{k}^{2}$$
thus $z_{k}^{2} = w_{k}^{2}, \; k \geq 1$.\\
And so on. In particular 
$$z_{1}^{1}=w_{1}^{1},\;\ldots,  z_{1}^{m-1}=w_{1}^{m-1}$$
contradicting $(i_{1},\;\ldots,  i_{m-1}) \neq (h_{1},\;\ldots,  h_{m-1})$.

\end{proof}

\begin{example}
  Let $\Phi$ be the map obtained by the local interaction
\[M:=
\left(
  \begin{array}{cc}
    2 & 1 \\
    1 & 2 \\
  \end{array}
\right)
\]
Fixed points ($m=1$) are obtained by $x_1=1$ and  $x_{k+1}$ solving $\phi(x_{k},x_{k+1})=x_{k}$, that is,\\
$ \phi(1,x_{2})=1$ then $x_{2}:=2$;\\
$ \phi(2,x_{3})=2$ then $x_{3}:=2$;\\
$ \phi(2,x_{4})=2$ then $x_{4}:=2$;\\
and so on, thus the first fixed point is
$(1,2,2,2,2,2,...)$.
The second one is obtained by $x_1=2$ and  $x_{k+1}$ solving $\phi(x_{k},x_{k+1})=x_{k}$, that is,\\
$ \phi(2,x_{2})=2$ then $x_{2}:=2$;\\
$ \phi(2,x_{3})=2$ then $x_{3}:=2$;\\
$ \phi(2,x_{4})=2$ then $x_{4}:=2$;\\
and so on, thus the first fixed point is
$(2,2,2,2,2,2,...)$.

For period 2 we have four points\\
a) $x_1=1,z_1=1$ and $x_{k+1}, \; z_{k+1}$ solving $\phi(x_{k},x_{k+1})=z_{k}, \; \phi(z_{k},z_{k+1})=x_{k}$. So, \\
$\phi(1,x_{2})=1, \; \phi(1,z_{2})=1$ then $x_{2}=2$ and $z_{2}=2$;\\
$\phi(2,x_{3})=2, \; \phi(2,z_{3})=2$ then $x_{3}=2$ and $z_{3}=2$;\\
and so on. Thus $(1,2,2,2,2,....)$ is our first point (it is also a fixed point).

b) $x_1=1,z_1=2$ and $x_{k+1}, \; z_{k+1}$ solving $\phi(x_{k},x_{k+1})=z_{k}, \; \phi(z_{k},z_{k+1})=x_{k}$. So, \\
$\phi(1,x_{2})=2, \; \phi(2,z_{2})=1$ then $x_{2}=1$ and $z_{2}=1$;\\
$\phi(1,x_{3})=1, \; \phi(1,z_{3})=1$ then $x_{3}=2$ and $z_{3}=2$;\\
and so on. Thus $(1,1,2,2,2,....)$ is our second point.

The remaining points are  $(2,1,2,2,2,....)$ and $(2,2,2,2,2,....)$.

Finally, we compute a periodic point of period 3. For that we choose $x_1=1,z_1=2,w_1=1$ and use the equations, \\
$\phi(x_{k},x_{k+1})=z_{k} \;  \phi(z_{k},z_{k+1})=w_{k}\; \phi(w_{k},w_{k+1})=x_{k}$.
So\\
$\phi(1,x_{2})=2 \; \phi(2,z_{2})=1\; \phi(1,w_{2})=1$ then
$x_2=1$, $z_{2}=1$ and $w_{2}=2$;\\
$\phi(1,x_{3})=1 \; \phi(1,z_{3})=2\; \phi(2,w_{3})=1$ then
$x_3=2$, $z_{3}=1$ and $w_{3}=1$;\\
$\phi(2,x_{4})=1 \; \phi(1,z_{4})=1\; \phi(1,w_{4})=2$ then
$x_4=1$, $z_{4}=2$ and $w_{4}=1$;\\
$\phi(1 ,x_{5})= 2 \; \phi( 2,z_{5})=1 \; \phi( 1,w_{5})= 1$ then
$x_5=1$, $z_{5}=1$ and $w_{5}=2$;\\
and so on, thus $(1,1,2,1,1,2,1,1,2,1,1,2,1,1,...)$ is a periodic point of period 3.
\end{example}



\section{Measure rigidity review} \label{ss6}

The {\em measure rigidity} problem for a pair of maps, that is, the problem of showing the uniqueness of the measure (generally the Parry measure) that is invariant for some pair of maps, is related to an early result by Furstenberg who proved that for relative primes $p,q\in\mathbb{Z}$ the unique infinite set of the torus $\mathbb{T}:=\mathbb{R}/\mathbb{Z}$ which is invariant for the maps $T_p:\mathbb{T}\to\mathbb{T}$ and $T_q:\mathbb{T}\to\mathbb{T}$, given by $T_p(x)= px$ and $T_q(x)=qx$, is the whole torus itself \cite{Furstenberg67}. Such result leads Furstenberg to conjecture that the Lebesgue probability measure is the unique continuous probability measure on the torus which is simultaneously invariant for the maps $T_p$ and $T_q$ (see \cite{Lyons88}).\\

Since each point of a shift space is a sequence that carries explicitly all the information about its topological location in the space, it follows that there is a direct relationship between the emergence of patterns in a given point of the shift space under the action of some map and the trajectory of this point for that map.  From the statistical point of view, by supposing $\Phi:\Omega\to \Omega$ is a cellular automaton and $\mu$ is a $\Phi$-invariant probability measure on the Borelians of $\Omega$, it means that the probability of a $\mu$-randomly chosen point $x\in \Omega$ being such that $\Phi^n(x)$ has some pattern $w_1w_2\ldots w_k$ on first $k$ positions is equal to $\mu( \overline{w_1w_2\ldots w_k})$, for all $n\geq0$.

If, additionally, $\mu$ is assumed to be also $\sigma$-invariant, then the probability that $\Phi^n(x)$ exhibits the pattern $w_1w_2\ldots w_k$ at any fixed $k$ consecutive positions, for a point $x \in \Omega$ randomly chosen according to $\mu$, is equal to $\mu( \overline{w_1w_2\ldots w_k})$. In other words,  considering the double-indexed sequence $\Big(\Phi^n(x)_i\Big)_{i,n\geq 0}$, measures that are simultaneously invariant under $\sigma$ and a cellular automaton $\Phi$ enable the identification of patterns that are spatially (in the index $i$) and temporally (in the index $n$) statistically invariant through the orbit of $x$ for $\Phi$.\\

An early result characterizing measures that are $(\sigma,\Phi)$-invariant is due to Lind \cite{Lind84}
who examined the scenario where $\mathcal{A}$ is the group $\mathbb{Z}_2$, $\Omega=\mathcal{A}^\mathbb{Z}$, $\Phi:\Omega\to\Omega$ has local rule $\phi(a,b)=a+b \pmod{2}$, and $\mu$ is any Bernoulli probability measure with full support on $\Omega$. Under these conditions, Lind demonstrated that the Cesàro mean distribution $\mathcal{C}^\Phi_N(\mu)=N^{-1}\sum_{n=0}^{N-1}\mu\circ\Phi^{-n}$ converges to the Haar measure, which here is the uniform Bernoulli probability measure $\lambda$. As consequence, since any initial Bernoulli measure is $\sigma$-invariant, and $\lambda$ is $(\sigma,\Phi)$-invariant, it follows that the uniform Bernoulli probability measure is the unique Bernoulli probability measure $(\sigma,\Phi)$-invariant\footnote{Note that given any $\sigma$-invairant meaures $\mu$, if the Cesàro mean distributions $N^{-1}\sum_{n=0}^{N-1}\mu\circ\Phi^{-n}$ converges to some probability measure $\hat\mu$ in the weak* topology, then $\hat\mu$ is $(\sigma,\Phi)$-invariant. Furthermore, it is interesting to notice that the uniform Bernoulli measures is invariant for a cellular automaton $\Phi:\mathcal{A}^{\mathbb{Z}^d}\to\mathcal{A}^{\mathbb{Z}^d}$ if, and only if, $\Phi$ is onto.}.

Latter, Schmidt \cite{Schmidt1995} considered $\mathcal{A}$ being any Abelian group, and extend the group operation from $\mathcal{A}$ to $\Omega=\mathcal{A}^{\mathbb{Z}^d}$ as a component-wise operation. Among other important results provided by the author, one can use  \cite[Corollary 29.5, p. 289]{Schmidt1995} to find out that if $\mathcal{A}=\mathbb{Z}_2$ and $\phi:\mathcal{A}^H\to\mathcal{A}$, the local rule of $\Phi$, is defined with $H\subset\mathbb{Z}$ such that $|H|\geq 2$, then the unique $(\sigma,\Phi)$-invariant probability measure with full support on $\Omega=\mathcal{A}^\mathbb{Z}$ which holds a certain mixing property (called $H$-mixing) is the uniform Bernoulli probability measure. This result can be extend for measures with full support on a subshift $G\subset\mathcal{A}^{\mathbb{Z}^d}$, where it is assumed that $G$ is a subgroup of $\mathcal{A}^{\mathbb{Z}^d}$ and $\Phi(G)=G$. In such a case,  the unique $(\sigma,\Phi)$-invariant probability measure with full support on $G$ is the Haar measure on $G$ - see \cite[Proposition 29]{Pivato2009}).\\

As Lind \cite{Lind84} and  Schmidt \cite{Schmidt1995}, as the subsequent works that have addressed the problem of measure rigidity in cellular automata, have always considered an algebraic structure on the alphabet which induces an algebraic structure on $ \mathcal{A}^{\mathbb{Z}^d}$,  $\Omega$ being a subshift and a subgroup of $\mathcal{A}^{\mathbb{Z}^d}$, and cellular automata $\Phi:\Omega\to\Omega$ which are endomorphisms for that algebraic structure. In particular, several works have considered the particular classes of cellular automata whose local rules have algebraic origin, that is, cellular automata $\Phi:\Omega\to\Omega$ with local rule $\phi:\mathcal{A}^H\to \mathcal{A}$ for some $H$ finite subset of $\subset \mathbb{Z}^d$, such that for all $x=(x_i)_{i\in H}\in \mathcal{A}^H$ we have $$\phi(x)=\sum_{i\in H}\eta_i(x_i)+c,$$
where: the sum is with respect to the operation considered in $\mathcal{A}$; $\eta_i:\mathcal{A}\to\mathcal{A}$ is an endomorphism for each $i\in H$; $\eta_i\circ\eta_j=\eta_j\circ\eta_i$ for all $i,j\in H$; and $c$ is a fixed symbol of $\mathcal{A}$. A cellular automaton in this form is said to be an {\em affine cellular automaton} and, in the particular case that $c=0$ (the identity element of $\mathcal{A}$) it is said to be a {\em linear cellular automaton}, while if $\eta_i:\mathcal{A}\to\mathcal{A}$ is the identity map for all $i\in H$ and $c=0$ it is said to be a {\em group cellular automaton}.

We remark that the algebraic origin of the cellular automata considered in all of these works is due to the need for a local rule for which one can explicitly compute successive iterates.\\

Following Lind’s seminal work \cite{Lind84}, several papers have found out measure rigidity results (see Table \ref{table_rigidity}). By assuming that $\mathcal{A}$ is some group (with some specific features), $\Omega$ is some specific shift space on $\mathcal{A}$,  $\mu$ is a measure on $\Omega$ (taken within some class of measures), and $\Phi:\Omega\to\Omega$ belongs to some specific class of endomorphic cellular automata, these works proved the convergence of the Cesaro mean distribution of $\mu$ under the dynamics of $\Phi$. 

On the other hand, following Schmidt’s approach and considering some conditions on the entropy and ergodicity, it was proved that the Haar measure (in this case the uniform Bernoulli measure) is the unique $(\sigma,\Phi)$-invariant measure if $\Phi:\mathcal{A}^\mathbb{Z}\to\mathcal{A}^\mathbb{Z}$ is a bipermutative endomorphic cellular automaton with local rule $\phi:\mathcal{A}^H\to \mathcal{A}$ for some $H\subset \mathbb{Z}$, and:
$\mathcal{A}=\mathbb{Z}/_{\mathbb{Z}_p}$ for some $p$ prime, $|H|=2$, and $\Phi$ is an affine cellular automaton \cite{HostMaassMartinez}; $\mathcal{A}$ is any finite Abelian group, and $|H|=2$ \cite{Pivato2005}; $\mathcal{A}$ is any Abelian group, and $|H|>2$ \cite{Sablik2007}. The Haar measure also was proved to be the unique $(\sigma,\Phi)$-invariant measure exhibiting some entropic and ergodic properties  for some classes endomorphic cellular automata $\Phi:\mathcal{A}^{\mathbb{Z}^d}\to\mathcal{A}^{\mathbb{Z}^d}$ on Abelian groups \cite{Einsiedler2005,Pivato2008}.\\

It is worth noting that Furstenberg's conjecture on measure rigidity for maps on the torus was proved by Lyons in \cite{Lyons88}, four years after the result by Lind \cite{Lind84} for the cellular automaton, by considering the additional hypothesis that the measure is $T_p$ exact. Furthermore, the author also found sufficient conditions under which a probability measure $\mu$ is such that $\nu \circ T_q^{-n}$ converges to the Lebesgue measure in the weak$^*$ topology. Still considering the case of the torus, in \cite{Rudolph90} and \cite{Carvalho98} it was proved that the Lebesgue measure is the unique $(T_p, T_q)$-invariant measure in a wider class of measures. In \cite{Carvalho98}, measure rigidity results were also proved for $(T, S)$-invariant measures where $T$ and $S$ are general differentiable expansive maps on the torus.

 \begin{table}[H]\tiny
\centering
\renewcommand{\arraystretch}{0.000000000001} \begin{tabular}{|p{2cm}|p{3cm}|p{3cm}|p{1.5cm}|p{.8cm}|}
\hline

 \begin{center}\bf Shift space\end{center} & \begin{center}\bf Local rule type\end{center} & \begin{center}\bf Measure class\end{center} & \begin{center}\bf Proof \end{center} & \begin{center}\bf Ref.\end{center} \\

\hline\hline

 &
 {\begin{center}\ \vspace{.1cm}\end{center}}&
{\begin{center}Bernoulli measures\end{center}} &
 &
{\begin{center} \cite{Lind84}\end{center}} \\

\cline{3-3}\cline{5-5}

{\begin{center}\vspace{-.5cm}$\mathcal{A}$ is the group $\mathbb{Z}_2$\\ \phantom{.}\\ $\Omega=\mathcal{A}^\mathbb{Z}$ \end{center}}  &
{\begin{center}\ \vspace{-.6cm} \\  Group C. A.\end{center}} &
{\begin{center}Markov measures\end{center}} &
&
{\begin{center}\cite{FerMaassMart,MaassMartinez}\end{center}}\\

\cline{1-3}\cline{5-5}

& {\begin{center}Diffusive linear C. A.\end{center}} &
{\begin{center}Harmonic mixing\end{center}} &
 {\begin{center}Harmonic analysis\end{center}}& 
{\begin{center} \cite{PivatoYassawi2002,PivatoYassawi2004}\end{center}} \\

\cline{2-3}\cline{5-5}

{\begin{center}$\mathcal{A}$ is any finite Abelian group\end{center}}
& {\begin{center}Dispersive linear C. A.\end{center}} &
{\begin{center}Dispersion mixing\end{center}} &
& 
{\begin{center} \cite{PivatoYassawi2004}\end{center}} \\

\cline{2-3}\cline{5-5}

{\begin{center} $\Omega=\mathcal{A}^\mathbb{M}$, $\mathbb{M}$ any monoid  \end{center}}& 
{\begin{center} Affine C. A.\\ \ \\ (right-permutative $\Psi$-associative and $N$-scaling)\end{center}}  &
&
 & 
{\begin{center}\cite{HostMaassMartinez}\end{center}} \\

\cline{1-2}\cline{4-5}

{\begin{center}$\mathcal{A}$ is the group $\mathbb{Z}_{p^s}$\\ \phantom{.}\\ $\Omega=\mathcal{A}^\mathbb{Z}$ \end{center}}  
& 
 &
{\begin{center}\ \vspace{-1cm} \\ \ Measures with complete connections and summable decay\end{center}}
&
 &
{\begin{center}\cite{FerMaassMartNey}\end{center}} \\

\cline{1-1}\cline{3-3}\cline{5-5}

{\begin{center}${\Omega}\subseteq(\underset{i=1}{\overset{s}\bigoplus}(\mathbb{Z}_{p^i})^{n_i})^{\mathbb{Z}^d}$\\ \ \end{center}}
& {\begin{center}\vspace{-.5cm} Linear C. A. \end{center}}
& {\begin{center}Markov measures\end{center}} & {\begin{center} Renewal theory\end{center}}
& {\begin{center}\cite{mmpy,mmpy2}\end{center}} \\

\cline{1-3}\cline{5-5}

  &
{\begin{center} Affine C. A.\end{center}} &
 &  
& 
{\begin{center}\cite{MaassMartinezSobottka2006}\end{center}} \\

\cline{2-2}\cline{4-5}

{\begin{center}\vspace{-.3cm}${\Omega}\subseteq(\underset{i=1}{\overset{s}\bigoplus}(\mathbb{Z}_{p^i})^{n_i})^{\mathbb{Z}}$\end{center}}& 
{\begin{center}Structurally-compatible C. A.\\ \ \\ (right-permutative $\Psi$-associative and $N$-scaling)\end{center}}& 
{\begin{center}\vspace{-.3cm} Measures with complete connections and summable decay\end{center}}& 
{\begin{center}\vspace{.2cm} Measure conjugacy\\  \end{center}}& 
{\begin{center}\cite{Sobottka2008}\end{center}} \\

\hline

\end{tabular}\normalsize

\caption{Different settings where the measure rigidity results have been obtained through the Cesaro mean convergence of an initial $\sigma$-invariant measure.  In the first column of the table we state the shift space and the group structure, in the second column the type of cellular automata, in the third column the class of initial measure, in the fourth column the method used to prove the convergence, and in the last column the paper were the result is proved.}\label{table_rigidity}

\end{table}

\section{Appendix}

\begin{proposition} \label{des}  Let $\phi:\mathcal{A}\times\mathcal{A} \to \mathcal{A}$ the  local rule we assumed in Section \ref{sec:2}. Consider the map $\Phi: \Omega \to \Omega$ associated with this rule. Then, for any $x \in \Omega$ the set of preimages by $\Phi$ is dense.
\end{proposition}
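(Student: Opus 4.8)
The plan is to show that the full preimage set $\bigcup_{n\ge 0}\Phi^{-n}(x)$ meets every basic open set, i.e.\ every cylinder $\overline{c_1\dots c_n}$. Since the cylinders $\overline{c_1\dots c_n}$ form a basis for the topology of $\Omega$ and each has diameter at most $2^{-n}$, density is equivalent to the statement that for every $n$ and every word $(c_1,\dots,c_n)\in\mathcal{A}^n$ there is a point of $\Phi^{-n}(x)$ whose first $n$ coordinates are exactly $c_1,\dots,c_n$. First I would recall that, because $j\mapsto\phi(a,j)$ is a bijection for each $a$, the inverse branches $\tau_j$ satisfy $\Phi\circ\tau_j=\mathrm{Id}$ and the sets $\tau_j(\Omega)$ (indexed by the prescribed first symbol $j$) are pairwise disjoint; hence $\Phi^{-1}(z)=\{\tau_j(z):j\in\mathcal{A}\}$ has exactly $r$ points and, iterating, $\Phi^{-n}(x)=\{\tau_{j_1}\circ\cdots\circ\tau_{j_n}(x):(j_1,\dots,j_n)\in\mathcal{A}^n\}$ has exactly $r^n$ points.

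The heart of the argument is the claim that the assignment $(j_1,\dots,j_n)\mapsto$ (first $n$ coordinates of $\tau_{j_1}\circ\cdots\circ\tau_{j_n}(x)$) is a bijection from $\mathcal{A}^n$ onto $\mathcal{A}^n$. To prove it I would use the recursive description of a single branch: writing $w=\tau_j(z)$ one has $w_1=j$ and $w_{k+1}=u_{w_k}(z_k)$, where each $u_a$ is a bijection of $\mathcal{A}$ (it is the inverse of the permutation $c\mapsto\phi(a,c)$). Setting $y^{(m)}:=\tau_{j_m}\circ\cdots\circ\tau_{j_n}(x)$, so that $y^{(m)}=\tau_{j_m}(y^{(m+1)})$ and $(y^{(m)})_1=j_m$, an induction on $k$ shows that $(y^{(m)})_k$ depends only on $j_m,\dots,j_{m+k-1}$, a ``diagonal'' dependence propagated by $(y^{(m)})_{k+1}=u_{(y^{(m)})_k}((y^{(m+1)})_k)$. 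In particular the prefix of $y^{(1)}$ of length $n$ is a function of $(j_1,\dots,j_n)$ alone. Freezing $j_1,\dots,j_{k-1}$, the same recursion exhibits $j_k\mapsto (y^{(1)})_k$ as a composition of the bijections $u_{(\cdot)}$ tracing back to $(y^{(k)})_1=j_k$, hence a bijection of $\mathcal{A}$; the resulting triangular structure (coordinate $k$ of the output determined bijectively by $j_k$ once $j_1,\dots,j_{k-1}$ are fixed) upgrades this to bijectivity of the whole prefix map.

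Granting the claim, every word $(c_1,\dots,c_n)\in\mathcal{A}^n$ is realized as the length-$n$ prefix of a unique point of $\Phi^{-n}(x)$, so $\Phi^{-n}(x)\cap\overline{c_1\dots c_n}\neq\emptyset$. Given any $w\in\Omega$ and $\varepsilon>0$, choosing $n$ with $2^{-n}<\varepsilon$ and $(c_1,\dots,c_n)=(w_1,\dots,w_n)$ produces a preimage within distance $2^{-n}<\varepsilon$ of $w$, which proves density. I expect the bijectivity claim to be the only real obstacle: the bookkeeping of which branch symbols determine which coordinate must be set up carefully, since the coordinates of the composite are not given by a single $u_a$ but by an iterated substitution. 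An alternative that bypasses this computation is to invoke the second statement of Proposition~\ref{ell}, which supplies a bijection between $\Phi^{-n}(x)$ and $\sigma^{-n}(x)$ preserving the first $n$ coordinates; since $\sigma^{-n}(x)$ trivially realizes every length-$n$ prefix (prepend an arbitrary word to $x$), the same holds for $\Phi^{-n}(x)$, and density follows at once.
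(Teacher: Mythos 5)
Your proof is correct, but it follows a genuinely different route from the paper. The paper's proof (Proposition \ref{des} in the Appendix) is a soft IFS argument: the inverse branches $\tau_j$ form an iterated function system of contractions, so the fractal (Hutchinson--Barnsley) operator $F(B)=\bigcup_{j\in\mathcal{A}}\tau_j(B)$ has a unique compact attractor $K$ with $F^k(B)\to K$ for every compact $B\neq\emptyset$; taking $B=\{x\}$ shows the preimages of $x$ accumulate exactly on $K$, and a short surjectivity computation giving $F(\Omega)=\Omega$ combined with uniqueness of the attractor yields $K=\Omega$. You instead prove the hard combinatorial statement that the prefix map $(j_1,\dots,j_n)\mapsto\bigl(\text{first $n$ coordinates of }\tau_{j_1}\circ\cdots\circ\tau_{j_n}(x)\bigr)$ is a bijection of $\mathcal{A}^n$, via the triangular recursion $w_1=j$, $w_{k+1}=u_{w_k}(z_k)$; your bookkeeping is sound (the $k$-th output coordinate depends only on $j_1,\dots,j_k$ and is bijective in $j_k$ once $j_1,\dots,j_{k-1}$ are frozen, since it is a composition of the bijections $u_a$ with frozen subscripts). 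What this buys you is strictly more than the paper proves here: $\Phi^{-n}(x)$ meets every cylinder of length $n$ in exactly one point, so the $n$-th preimage set is $2^{-n}$-dense with an exact count --- in effect you re-derive the second claim of Proposition \ref{ell} (Proposition \ref{ell43}), whereas the paper's IFS argument gives density with no rate and no count, but is shorter and transfers verbatim to any contractive IFS whose attractor is the whole space. Your fallback of simply citing Proposition \ref{ell} is also legitimate and non-circular, since the Appendix proof of Proposition \ref{ell43} nowhere uses Proposition \ref{des}; note only that the paper's ``$\sigma^{-1}(x)$'' in that statement must be read as $\sigma^{-n}(x)$, as you did, for the statement to parse.
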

\begin{proof}
   Let $\Lambda_{k}(x):=\{ y \in \Omega \; | \; \Phi^k(y)=x \}$ be the set of preimages of $x$ of order $k$ and $\Lambda(x):=\bigcup_{k \geq 1}\Lambda_{k}(x)$ the set of all preimages of $x$.\\
   We will use an IFS approach to simplify the reasoning. For each $j \in \mathcal{A}$ we define the inverse branch $\tau_j:  \Omega \to \Omega$ by the formula
   $$\tau_j(x)=y \text{ if } \Phi(y)=x \text{ and  } y_1=j.$$
   In this way, the family $\tau_j, \; j \in \mathcal{A}$ is a iterated function system.\\
   Considering the fractal operator, $F(B)=\cup_{j \in \mathcal{A}} \tau_j(B)$, defined for the set of compact not empty parts of $\Omega$, and the fact that each $\tau_j, \; j \in \mathcal{A}$ is a Lipschitz contraction, there exists a unique compact set $K$ such that $F(K)=K$ and $F^k(B) \to K$ for any $B$.\\
   First, we observe that choosing $B=\{x\}$ we get $F^k(B)= \Lambda_{k}(x)$ thus $\Lambda(x)$ is dense in $K$.\\
   To conclude our proof we claim that $K= \Omega$. To do that, we will show that $F(\Omega)=\Omega$  and use the uniqueness of such set $K$. As $F(\Omega) \subseteq \Omega$, by definition, we just need to show the opposite inclusion. Take $y \in \Omega$ and $j=y_1$ we need to find $x \in \Omega$ such that $\tau_j(x)=y$, in other words,
   $$\phi(j,y_2)=x_1, \phi(y_2,y_3)=x_2, \phi(y_3,y_4)=x_3, \ldots$$
   which is always possible. Thus,  $y \in F(\Omega)$ and, since $y$ is arbitrary $F(\Omega) \supseteq \Omega$. So we have the equality.

   \smallskip

\end{proof}

\begin{proposition} \label{ell43}
   Consider $\mathcal{K}:=\{\overline{1}, \overline{2}, \ldots, \overline{r}\}$, a partition of $\Omega$. Then, for all $n \geq 0$ we have
   $$ \mathcal{K} \bigvee \Phi^{-1}(\mathcal{K}) \bigvee \ldots \bigvee \Phi^{-n}(\mathcal{K})=\biguplus_{a_{1} \ldots a_{n+1} \in \mathcal{A}} \overline{a_{1} \ldots a_{n+1}}.$$

 Moreover, give $n$, $x\in \Omega$, for each $y=(a_1,a_2,..,a_n,y_{n+1},y_{n+2},..) \in \Phi^{-n}(x)$, there exist a unique $z$ of the form $z=(a_1,a_2,...,a_n, z_{n+1},z_{n+2},..)$ in $\sigma^{-1} (x)$, and vice versa.
\end{proposition}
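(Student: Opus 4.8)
The plan is to derive both assertions from a single structural fact that isolates the role of hypothesis \eqref{pkit}: since $b \mapsto \phi(a,b)$ is a bijection for each fixed $a$, prescribing the first coordinate $y_1$ of a sequence together with the successive values $\phi(y_i,y_{i+1})$ determines the remaining coordinates one at a time. Precisely, for every $a_1 \in \mathcal{A}$ and every string $(c_1,\ldots,c_n) \in \mathcal{A}^n$ there is exactly one block $(y_1,\ldots,y_{n+1})$ with $y_1=a_1$ and $\phi(y_i,y_{i+1})=c_i$ for $1\le i\le n$, obtained recursively by $y_{i+1}=u_{y_i}(c_i)$; this is the same recursion already used in \eqref{tryp2}. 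The map $(a_1,c_1,\ldots,c_n)\mapsto(y_1,\ldots,y_{n+1})$ is a bijection of $\mathcal{A}^{n+1}$ onto itself, its inverse sending $(y_1,\ldots,y_{n+1})$ to $(y_1,\phi(y_1,y_2),\ldots,\phi(y_n,y_{n+1}))$.

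For the partition identity I would argue by induction on $n$ using the refinement recursion
$$\mathcal{K}_n := \mathcal{K} \vee \Phi^{-1}(\mathcal{K}) \vee \cdots \vee \Phi^{-n}(\mathcal{K}) = \mathcal{K} \vee \Phi^{-1}(\mathcal{K}_{n-1}),$$
valid because $\Phi^{-1}$ commutes with $\vee$. The base case $n=0$ is $\mathcal{K}_0=\mathcal{K}=\{\overline{a_1}:a_1\in\mathcal{A}\}$. For the step I assume $\mathcal{K}_{n-1}=\biguplus_{c_1\cdots c_n}\overline{c_1\cdots c_n}$; then $\Phi^{-1}(\overline{c_1\cdots c_n})=\{y:\phi(y_i,y_{i+1})=c_i,\ 1\le i\le n\}$, and intersecting with $\overline{a_1}$ and applying the structural fact shows this intersection is the single length-$(n+1)$ cylinder $\overline{y_1\cdots y_{n+1}}$ determined by $a_1$ and $(c_1,\ldots,c_n)$. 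As $(a_1,c_1,\ldots,c_n)$ ranges over $\mathcal{A}^{n+1}$, the bijection above makes $(y_1,\ldots,y_{n+1})$ range over all length-$(n+1)$ cylinders exactly once, giving the claimed equality.

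For the second assertion (which I read as $z\in\sigma^{-n}(x)$, the only dimensionally consistent version) I would describe the two fibers explicitly. The set $\sigma^{-n}(x)$ consists precisely of the sequences $(c_1,\ldots,c_n,x_1,x_2,\ldots)$ with $(c_1,\ldots,c_n)\in\mathcal{A}^n$ free, so it is parametrized bijectively by its first $n$ coordinates. For $\Phi^{-n}(x)$, iterating the inverse branches $\tau_{j_1}\circ\cdots\circ\tau_{j_n}$ (equivalently, the formula \eqref{tryp2}) shows that any $y$ with $\Phi^n(y)=x$ is uniquely determined by its first $n$ coordinates $(a_1,\ldots,a_n)$, which may be chosen freely; hence $\Phi^{-n}(x)$ is likewise parametrized bijectively by its first $n$ coordinates and $\#\Phi^{-n}(x)=r^n=\#\sigma^{-n}(x)$. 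The correspondence sending $y\in\Phi^{-n}(x)$ with first block $(a_1,\ldots,a_n)$ to the unique $z=(a_1,\ldots,a_n,x_1,x_2,\ldots)\in\sigma^{-n}(x)$ with the same first block, and conversely, is therefore the asserted bijection.

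The only genuinely non-formal point is the structural fact of the first paragraph, namely that prescribing $y_1$ and the images $\phi(y_i,y_{i+1})$ pins down each $y_i$ uniquely; everything else is bookkeeping over finite partitions and the identification of two finite fibers of equal cardinality. The hypothesis \eqref{pkit} enters exactly here, and only here, through the invertibility of each map $u_a=\phi(a,\cdot)^{-1}$, which is what forces both fibers to be coordinatized by the same free parameter set $\mathcal{A}^n$.
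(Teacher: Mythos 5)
Your proof is correct. For the partition identity your induction is in substance the paper's own: both arguments reduce to the fact that permutativity \eqref{pkit} lets one solve uniquely for one unknown coordinate at a time. The bookkeeping differs, though: you peel the refinement off the front, writing $\mathcal{K}_n=\mathcal{K}\vee\Phi^{-1}(\mathcal{K}_{n-1})$ and computing $\overline{a_1}\cap\Phi^{-1}(\overline{c_1\cdots c_n})$ as a single cylinder of length $n+1$ via the recursion $y_{i+1}=u_{y_i}(c_i)$, while the paper peels it off the back, writing $\mathcal{K}_{n+1}=\mathcal{K}_n\vee\Phi^{-(n+1)}(\mathcal{K})$, proving the two inclusions separately, and using that the first symbol of $\Phi^{n+1}(y)$ is a function $\psi(y_1,\ldots,y_{n+2})$ of the first $n+2$ coordinates of $y$ which is bijective in its last argument. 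The two mechanisms are interchangeable; yours has the small advantage that the bijection $(a_1,c_1,\ldots,c_n)\mapsto(y_1,\ldots,y_{n+1})$, with explicit inverse $(y_1,\ldots,y_{n+1})\mapsto(y_1,\phi(y_1,y_2),\ldots,\phi(y_n,y_{n+1}))$, makes the counting (each length-$(n+1)$ cylinder occurring exactly once) explicit rather than implicit. The more substantive difference is that the paper's appendix proof stops after the partition identity and never addresses the ``Moreover'' clause at all, whereas you prove it: your parametrization of both fibers $\Phi^{-n}(x)$ and $\sigma^{-n}(x)$ by their freely chosen first $n$ coordinates is exactly what that clause requires, and your reading of the misprinted $\sigma^{-1}(x)$ as $\sigma^{-n}(x)$ is the correct one --- it is the form in which the claim is actually used around \eqref{esque}, where $\Phi^{n}$-preimages of $x$ inside a cylinder $\overline{\mathbf{a}}$ are matched with concatenations $\mathbf{a}x$.
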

\begin{proof}
   The proof is by induction w.r.t. $n$. The bases, $n=0$ is the identity
   $$\mathcal{K}=\biguplus_{a_{1} \in \mathcal{A}} \overline{a_{1} }.$$ Now suppose, by hypothesis, that
   $$ \mathcal{K} \bigvee \Phi^{-1}(\mathcal{K}) \bigvee \ldots \bigvee \Phi^{-n}(\mathcal{K})=\biguplus_{a_{1} \ldots a_{n+1} \in \mathcal{A}} \overline{a_{1} \ldots a_{n+1}}.$$
   We consider a generic cylinder $\overline{b_{1} \ldots b_{n+1} b_{n+2}}$ and by applying $\Phi$  we obtain
   $$\Phi(\overline{b_{1} \ldots b_{n+1} b_{n+2}}) \subset \overline{\phi(b_{1},  b_{2}) \ldots \phi(b_{n+1}, b_{n+2})}.$$
   From our hypothesis of induction, $$ \overline{\phi(b_{1},  b_{2}) \ldots \phi(b_{n+1}, b_{n+2})} \in \mathcal{K} \bigvee \Phi^{-1}(\mathcal{K}) \bigvee \ldots \bigvee \Phi^{-n}(\mathcal{K}),$$ that is, $\overline{b_{1} \ldots b_{n+1} b_{n+2}} \in \Phi^{-1}(\mathcal{K}) \bigvee \ldots \bigvee \Phi^{-n-1}(\mathcal{K})$.\\
   Choosing $\overline{b_1} \in \mathcal{K}$ we get,
   $\overline{b_{1} \ldots b_{n} b_{n+1}} \in \mathcal{K} \bigvee \Phi^{-1}(\mathcal{K}) \bigvee \ldots \bigvee \Phi^{-n-1}(\mathcal{K})$. Thus $ \mathcal{K} \bigvee \Phi^{-1}(\mathcal{K}) \bigvee \ldots \bigvee \Phi^{-n-1}(\mathcal{K})  \supseteq \biguplus_{a_{1} \ldots a_{n+2} \in \mathcal{A}} \overline{a_{1} \ldots a_{n+2}}.$\\
   To show the opposite relation we take any $B \in \mathcal{K} \bigvee \Phi^{-1}(\mathcal{K}) \bigvee \ldots \bigvee \Phi^{-n-1}(\mathcal{K})$. We claim the $B$ is a cylinder of length $(n+1)+1=n+2$. By the induction hypothesis $ \mathcal{K} \bigvee \Phi^{-1}(\mathcal{K}) \bigvee \ldots \bigvee \Phi^{-n}(\mathcal{K})=\biguplus_{a_{1} \ldots a_{n+1} \in \mathcal{A}} \overline{a_{1} \ldots a_{n+1}}$ so
   $$B=\overline{a_{1} \ldots a_{n+1}} \cap \Phi^{-n-1}(\mathcal{K}).$$
   Notice that
   $$\Phi(y_1, y_2, y_3, ...)= (\phi(y_{1},  y_{2}), \phi(y_{2}, y_{3}),\ldots )$$
   $$\Phi^2 (y_1, y_2, y_3, ...)= (\phi(\phi(y_{1},  y_{2}),  \phi(y_{2},  y_{3})), \phi(\phi(y_{3},  y_{4}), \phi(y_{4},  y_{5})),\ldots )$$
   and so on. In this way  $\Phi^m (y)= (\psi(y_{1}, \ldots, y_{m+1}), \ldots)$, where $\psi(y_{1}, \ldots, y_{m+1})$ is a function of the first $m+1$ coordinates.\\
   As $\mathcal{K}:=\{\overline{1}, \overline{2}, \ldots, \overline{r}\}$ and $\Phi^{n+1} (y) \in \mathcal{K}$ we must have $\overline{j}$ such that $\Phi^{n+1} (y) \in  \overline{j}$. At the same time $y_1=a_{1} \ldots y_{n+1}=a_{n+1}$ because $y \in \overline{a_{1} \ldots a_{n+1}}$. In other words,
   $$\psi(y_{1}, \ldots,  y_{(n+1)+1})=j$$
   $$\psi(a_{1}, \ldots, a_{n+1},  y_{n+2})=j.$$
   Let $a_{n+2}$ be the unique solution of the above equation. Thus,
   $$B=\overline{a_{1} \ldots a_{n+1}} \cap \Phi^{-n-1}(\mathcal{K})= \overline{a_{1} \ldots a_{n+1} a_{n+2}},$$ which concludes our proof.
\end{proof}

\begin{example} \label{kli45}  An example where equation \eqref{klj1}   is true for $\Phi_1=\sigma$ and
$\Phi_2=\Phi$,  $r=2$, for  functions that depend on the first three coordinates is the following. We will have many more choices of possible functions $A_1$ and $A_2$ when compared with the last example.

Consider $\phi$ such that
$$M= \left(
\begin{array}{cc}
\phi(1,1)  & \phi(1,2)    \\
\phi(2,1)  & \phi(2,2)
\end{array}
\right)=       \left(
\begin{array}{cc}
2 & 1    \\
2 & 1
\end{array}
\right).$$

Take the functions $A_1$ and $A_2$ depending on the three first coordinates satisfying
$$ A_1 (x_1,x_2,x_3,...) =  Q_{x_1,x_2,x_3} $$
and
$$ A_2 (x_1,x_2,x_3,...) =  C_{x_1,x_2,x_3} .$$

Then,
$$  A_1 - A_1 \circ \sigma =  A_2 - A_2 \circ \Phi,$$
means
$$Q_{x_1,x_2,x_3}- Q_{x_2,x_3,x_4}= C_{x_1,x_2,x_3}- C_{\phi(x_1,x_2),\phi(x_2,x_3),\phi(x_3,x_4)}.$$

We get a solution for the system if we assume that
\begin{itemize}
  \item $C_{1,1,1} = C_{2,2,2}$
  \item $C_{1,1,2} = Q_{2,2,1}-Q_{2,2,2}+C_{2,2,2}$
  \item $C_{1,2,1} = Q_{2,1,2}-Q_{2,1,1}+Q_{1,2,2}-Q_{2,2,2}+C_{2,2,2}$
  \item $C_{1,2,2} = Q_{1,2,2}-Q_{2,2,2}+C_{2,2,2}$
  \item $C_{2,1,1} =
Q_{2,2,1}-Q_{1,1,2}+Q_{1,2,2}-Q_{2,2,2}+C_{2,2,2}$
  \item $C_{2,1,2} =
Q_{2,1,2}+Q_{2,2,1}-Q_{1,1,2}-Q_{2,2,2}+C_{2,2,2}$
  \item $C_{2,2,1} =
Q_{2,2,1}-Q_{2,1,1}+Q_{1,2,2}-Q_{2,2,2}+C_{2,2,2}$
  \item $C_{2,2,2} = \text{ free}$
  \item $Q_{1,1,1} =
-Q_{2,2,1}+Q_{2,1,1}+Q_{1,1,2}-Q_{1,2,2}+Q_{2,2,2}$
  \item $Q_{1,1,2} = \text{ free}$
  \item $Q_{1,2,1} = \text{ free}$
  \item $Q_{1,2,2} = \text{ free}$
  \item  $Q_{2,1,1} = \text{ free}$
  \item $Q_{2,1,2} = \text{ free}$
  \item $Q_{2,2,1} = \text{ free}$
  \end{itemize}
  \begin{equation} \label{three}  \bullet \,Q_{2,2,2} = \text{ free}. \end{equation}

Therefore, under the above hypotheses the
$\sigma$-equilibrium for $A_2$ is equal to the $\Phi$-equilibrium for $A_1$.

We get above a system of  $16$ linear equations, therefore,   given any choice of the $8$ parameters $C_{i,j,k}$ (defining $A_2$) we can get values $Q_{i,j,k}$ (defining $A_1$) satisfying the system. The bottom line is that any $\sigma$-equilibrium probability for  a potential $A_2$, depending  on  the first three coordinates, can be realized as a $\Phi$-invariant probability.

A particular solution of the above system would be
$$C_{1,1,1} = {\frac{1}{2}}, C_{1,1,2} = -{\frac{1}{2}},
C_{1,2,1} = -{\frac{1}{2}}, C_{1,2,2} = -{\frac{1}{2}}, C_{2,1,1} = -{
\frac{1}{2}}, C_{2,1,2} = -{\frac{1}{2}},$$ $$ C_{2,2,1} = -{\frac{1}{2}},
C_{2,2,2} = {\frac{1}{2}}, Q_{1,1,1} = 1, Q_{1,1,2} = 0, Q_{1,2,1} = 0
, Q_{1,2,2} = 0,$$ $$ Q_{2,1,1} = 0, Q_{2,1,2} = 0, Q_{2,2,1} = 0,
Q_{2,2,2} = 1.
$$

The above potential is not a potential that depends on the first two coordinates.

\end{example}

\medskip


\begin{thebibliography}{999}
\footnotesize{



\bibitem{BaDe} M. F. Barnsley and S. Demko, Iterated function systems and the
global construction of fractals. Proceedings of the Royal Society of London.
A. Mathematical and Physical Sciences, 399 (1817): 243--275 (1985)


\bibitem{Carvalho98}
{\sc M. Carvalho,} 
\newblock Contributions to a rigidity conjecture.
\newblock {\em Acta Applicandae Mathematicae}~\textbf{53}, 265--295. (1998)


\bibitem{BaDeEl} M. F. Barnsley, S. G. Demko, J. H. Elton, and J. S.
Geronimo, Invariant measures for Markov processes arising from iterated
function systems with place-dependent probabilities. Annales de l'IHP
Probabilit\'es et statistiques, volume 24, pages 367--394 (1988)

\bibitem{Blanchard}
F. Blanchard and A.  Maass, Dynamical Properties of Expansive One-Sided Cellular
Automata, Israel Journal of Mathematics, 99, 149--174 (1997)

\bibitem{Bo} R. Bowen, Equilibrium states and the ergodic theory of Anosov diffeomorphisms.
Lecture Notes in Mathematics 470, Springer (1975)


\bibitem{BOR23} J. E. Brasil, E. R. Oliveira and R. R.  Souza,  Thermodynamic Formalism for General Iterated Function Systems with Measures. Qual. Theory Dyn. Syst. 22, 19 (2023).


\bibitem{CS}
T. Ceccherini-Silberstein and M. Coornaert, Cellular automata and groups. Springer Verlag (2010)

\bibitem{CDLS} L. Cioletti, M. Denker, A. O. Lopes and M. Stadlbauer, Spectral Properties of the Ruelle Operator for Product Type Potentials on Shift Spaces, Journal of the London Mathematical Society - Volume 95, Issue 2, 684--704 (2017)

\bibitem{CLT}
G. Contreras, A. O. Lopes and Ph. Thieullen.  Lyapunov minimizing measures for
expanding maps of the circle, \emph{Ergodic Theory and Dynamical Systems} Vol 21, 1379-1409 (2001)


\bibitem{Einsiedler2005}
{\sc  M. Einsiedler,} 
\newblock Isomorphism and measure rigidity for algebraic actions on zero-dimensional groups.
\newblock {\em Monatsh. Math.}~\textbf{144},~1, 39--69 (2005)


\bibitem{Fan} A. H. Fan and Ka-Sing Lau. \newblock Iterated function
system and Ruelle operator. \newblock {\em J. Math. Anal. Appl.}
231(2):319--344  (1999)


\bibitem{FerMaassMart}
P. A. Ferrari, A. Maass, and S. Mart\'{\i}nez,
\newblock Ces\`aro mean distribution of group automata starting from Markov measures.
\newblock {\em {\rm Preprint}} (1999).

\bibitem{FerMaassMartNey}
P. A. Ferrari, A. Maass, S. Mart\'inez and P. Ney, Ces\'aro Mean Distribution of Group
Automata Starting from Measures with Summable Decay,  Ergodic Theory and Dynamical
Systems, 20, 6, 1657--1670  (2000)


\bibitem{Furstenberg67}
H. Furstenberg, 
\newblock Disjointness in ergodic theory, minimal sets, and a problem in Diophantine approximation.
\newblock {\em Math. Systems Theory}~\textbf{1}, 1--49  (1967)


\bibitem{Haw} J. Hawkins,
The Mathematics of Cellular Automata, AMS (2024)


\bibitem{Peitgen}
F. V. Haeseler, H.-D. Peitgen, G. Skordev,
Linear Cellular Automata, Substitutions, Hierarchical Iterated Function Systems and Attractors
\newblock {\em Function Systems and Attractors, in
Fractal Geometry
and Computer Graphics}, pp  3-23 (1992)

\bibitem{HostMaassMartinez} B. Host, A. Maass  and S. Mart\'inez, Uniform Bernoulli Measure in Dynamics of
Permutative Cellular Automata with Algebraic Local Rules, Discrete and Continuous
Dynamical Systems, 9, 6, 1423--1446  (2003)


 \bibitem{Hu} J. E. Hutchinson. Fractals and self similarity, Indiana University Mathematics Journal, 30(5):713--747 (1981).

\bibitem{Katok}
A. Katok and B. Hasselblatt, Introduction to the modern theory of dynamical systems.  Cambridge
University Press, Cambridge (1995)

\bibitem{Kim} Y-O Kim and J. Lee, On the Gibbs measures of commuting one-sided subshifts of finite type,
Osaka J. Math.
37, 175-18  (2000)

\bibitem{LO1} A. O. Lopes and E. R. Oliveira,
Entropy and variational principles for holonomic probabilities of IFS, Disc. and Cont. Dyn. Systems, (Series A) Vol 23, N 3, 937--955 (2009)



\bibitem{Lo1}  A. O. Lopes,
Thermodynamic Formalism, Maximizing Probabilities and Large Deviations,
notes on line


 \bibitem{Lind84} D.  A. Lind, Applications of Ergodic Theory and Sofic Systems to Cellular Automata,
Physica D, 10, 1-2, 36-44 Cellular automata  (1984).


\bibitem{Lyons88}
R. Lyons,  
\newblock On measures simultaneously 2- and 3-invariant.
\newblock {\em Israel Journal of Mathematics}~\textbf{61},~2, 219--233 (1988)


\bibitem{MaassMartinez}
A.  Maass,  and Mart\'{\i}nez, 
\newblock On {C}es\`aro limit distribution of a class of permutative
cellular
automata.
\newblock {\em J. Statist. Phys.\/}~\textbf{90},~1-2, 435--452 (1998)



\bibitem{Maass}
A. Maass, Some Dynamical Properties of One-Dimensional Cellular Automata,
in Dynamics of Complex Interacting Systems (Santiago, 1994). Nonlinear Phenom. Complex
Systems. Vol. 2, Dordrecht: Kluwer Academic Publishers, pp. 35--80 (1996)



\bibitem{mmpy}
A. Maass,  S. Mart\'{\i}nez, M. Pivato and R.
Yassawi,  
\newblock Asymptotic randomization of subgroup shifts by linear
cellular automata.
\newblock {\em Ergodic Theory and Dynamical Systems\/}~\textbf{26},~4,
1203--1224 (2006)


\bibitem{mmpy2}
A. Maass,  S. Mart\'{\i}nez, M. Pivato and R.
Yassawi,  
\newblock Attractiveness of the Haar measure for
linear cellular automata on Markov subgroups.
\newblock {\em IMS Lecture Notes - Monograph Series, Dynamics and
Stochastics\/}~\textbf{48}, 100--108. {\em Festschrift in honour of
Michael Keane.}   (2006)

\bibitem{MaassMartinezSobottka2006} A. Maass, S. Martinez, M. Sobotka, Limit measures for affine cellular automata on topological Markov subgroups, Nonlinearity, Volume 19, Number 9 (2006)

\bibitem{Milnor}
J. Milnor, On the Entropy Geometry of Cellular Automata, Complex Systems, 2, 3,
357--385 (1988)


\bibitem{PP}
W. Parry and M.  Pollicott. Zeta functions and the periodic
orbit structure of hyperbolic dynamics, \emph{Ast\'erisque}
Vol {187-188} (1990)


\bibitem{Pivato1} M. Pivato,
The ergodic theory of cellular automata, International Journal of General Systems
Vol. 41, No. 6, 583--594 (2012)

\bibitem{Pivato2005} M. Pivato,
Invariant Measures for Bipermutative Cellular Automata, Discrete and
Continuous Dynamical Systems, 12, 4, 723--736 (2005)

\bibitem{Pivato2008} M. Pivato,
Module Shifts and Measure Rigidity in Linear Cellular Automata,
Ergodic Theory and Dynamical Systems, 28, 6, 1945--1958 (2008)



\bibitem{PivatoYassawi2002}
M. Pivato and R. Yassawi, 
\newblock Limit measures for affine cellular automata.
\newblock {\em Ergodic Theory and Dynamical Systems\/}~\textbf{22},~4,
1269--1287 (2002)

\bibitem{PivatoYassawi2004}
M. Pivato and R. Yassawi, 
\newblock Limit measures for affine cellular automata II.
\newblock {\em Ergodic Theory and Dynamical Systems \/}~\textbf{24},~6,
1961--1980 (2004)


\bibitem{Pivato2009}
M. Pivato, 
\newblock Ergodic Theory of Cellular Automata. 
\newblock {\em In Meyers, R. (eds) Encyclopedia of Complexity and Systems Science}. Springer, New York, NY-634-669 (2009)



\bibitem{Rudolph90}
D. J. Rudolph,  
\newblock $\times2$ and $\times3$ invariant measures and entropy.
\newblock {\em Ergod. Th. \& Dynam. Sys.}~\textbf{10}, 395--406 (1990)



\bibitem{Sablik2007}
M. Sablik,  
\newblock Measure rigidity for algebraic bipermutative cellular automata.
\newblock {\em Ergodic Theory and Dynamical Systems \/}~\textbf{27},~6,
1965--1990 (2007)




\bibitem{Schmidt1995}
K. Schmidt,  (1995).
\newblock {\em Dynamical systems of algebraic origin}. Progress in
Mathematics,
  Vol. \textbf{128}.
\newblock Birkh\"auser Verlag, Basel  (1995)

\bibitem{Sobottka2008}
M. Sobottka, 
\newblock Right-permutative cellular automata on topological Markov chains.
\newblock {\em Disc. and Cont. Dynamic. Systems}, \textbf{20}, 4, 1095--1110 (2008)


\bibitem{Sob2} M. Sobottka, Teoria Erg\'odica para Aut\^omatos Celulares  Alg\'ebricos, XXVII Coloq. Bras. Mat. - IMPA (2009)


 \bibitem{Sp} F. Spitzer.
A Variational characterization of finite Markov chains. \emph{The Annals of Mathematical Statistics}.
(43): N.1  303-307 (1972)

\bibitem{Walker} A. Walker,
Entropy and Applications of Cellular Automata, Internet Archive (2023)


\bibitem{Ye}
Y-L.Ye, Ruelle operator with weakly contractive iteration function systems, Erg. Theo and Dyn. Syst. 33 - 1265-1290 (2013)

}



\end{thebibliography}
\end{document}